\newtheorem{theorem}{Theorem}[section]
\newtheorem{alphatheorem}{Theorem}
\renewcommand*{\thealphatheorem}{\Alph{alphatheorem}}
\theoremstyle{definition}
\newtheorem{definition}[theorem]{Definition}
\newtheorem{proposition}[theorem]{Proposition}
\newtheorem{step}{Step}
\subjclass[2010]{ Primary: 37A45, 05D10, 28D05. Secondary: 37B05,
11J54, 11J70, 11J71}
\newtheorem*{proposition*}{Proposition}
\newtheorem*{observation*}{Observation}
\newtheorem*{claim*}{Claim}
\newtheorem*{lemma*}{Lemma}
\newtheorem{example}[theorem]{Example}
\newtheorem{corollary}[theorem]{Corollary}
\newtheorem{remark}[theorem]{Remark}
\newtheorem*{conjecture*}{Conjecture}
\newtheorem*{convention*}{Convention}
\newtheorem{question}{Question}
\newcommand{\Imm}{\mathrm{Im}}
\theoremstyle{plain}
\newtheorem{lemma}[theorem]{Lemma}
\newcommand{\bra}[1]{ \left( #1 \right) }
\renewcommand{\tilde}{\widetilde}
\newcommand{\abs}[1]{\left|#1\right|}
\newcommand{\fp}[1]{\left\{ #1 \right\}}
\newcommand{\br}[1]{\lbr #1 \rbr}
\newcommand{\ip}[1]{\left[ #1 \right]}
\newcommand{\norm}[1]{\left\lVert #1 \right\rVert}
\renewcommand{\b}{\beta}
\newcommand{\fpa}[1]{\left\lVert #1 \right\rVert}
\newcommand{\e}{\varepsilon}
\renewcommand{\a}{\alpha}
\renewcommand{\b}{\beta}
\newcommand{\NN}{\mathbb{N}}
\newcommand{\QQ}{\mathbb{Q}}
\newcommand{\Q}{\QQ}
\newcommand{\ZZ}{\mathbb{Z}}
\newcommand{\RR}{\mathbb{R}}
\newcommand{\R}{\RR}
\newcommand{\N}{\NN}
\newcommand{\Z}{\ZZ}
\newcommand{\CC}{\mathbb{C}}
\newcommand{\TT}{\mathbb{T}}
\newcommand{\cB}{\mathcal{B}}
\newcommand{\cF}{\mathcal{F}}
\newcommand{\cX}{\mathscr{X}}
\newcommand{\cG}{\mathcal{G}}
\newcommand{\floor}[1]{\left\lfloor #1 \right\rfloor}
\newcommand{\set}[2]{\left\{ #1 \ \middle| \ #2 \right\} }
\newcommand{\parbreak}[1]{
\begin{center}
***
\end{center}
}
\newcommand{\FS}{\operatorname{FS}}
\newcommand{\IP}{$\operatorname{IP}$}
\newcommand{\IPS}{$\mathrm{IPS}$}
\newcommand{\IPP}{$\mathrm{IP}_+$}
\newcommand{\IPd}{$\operatorname{IP}^*$}
\newcommand{\comment}[1]{}
\newcommand{\inter}[1]{\operatorname{int}#1}
\newcommand{\cl}[1]{\operatorname{cl}#1}
\newcommand{\ifbra}[1]{\left\llbracket #1 \right\rrbracket}
\newcommand{\lbr}{\langle\!\langle}
\newcommand{\rbr}{\rangle\!\rangle}
\newcommand{\Haland}{H\r{a}land}
\newcommand{\Malcev}{Mal'cev}
\begin{document}

\address{Department of Mathematics and Computer Science\\Institute of Mathematics\\
Jagiellonian University\\
ul. prof. Stanis\l{}awa \L{}ojasiewicza 6\\
30-348 Krak\'{o}w}
\email{jakub.byszewski@gmail.com}

\author[J. Byszewski \and J. Konieczny]{Jakub Byszewski \and Jakub Konieczny
}
\address{Mathematical Institute \\ 
University of Oxford\\
Andrew Wiles Building \\
Radcliffe Observatory Quarter\\
Woodstock Road\\
Oxford\\
OX2 6GG}
\email{jakub.konieczny@gmail.com}

\title{Sparse generalised polynomials}

\maketitle 

\begin{abstract}
	We investigate generalised polynomials (i.e.\ polynomial-like expressions involving the use of the floor function) which take the value $0$ on all integers except for a set of density $0$. 
	
	Our main result is that the set of integers where a sparse generalised polynomial takes non-zero value cannot contain a translate of an IP set. We also study some explicit constructions, and show that the characteristic functions of the Fibonacci and Tribonacci numbers are given by generalised polynomails. Finally, we show that any sufficiently sparse $\{0,1\}$-valued sequence is given by a generalised polynomial. 
\end{abstract}

\section*{Introduction}

Generalised polynomials are functions given by polynomial-like expressions involving the (possibly iterated) use of the floor function such as $$f(n) = n \lfloor \sqrt{2} n^2 +3\lfloor\sqrt{3} n \rfloor^2 \rfloor .$$ 

Despite some superficial similarities, the class of generalised polynomials differs in fundamental ways from the class of polynomials. For instance, the set of zeroes of a generalised polynomial may be infinite without the polynomial being $0$, which is the case for example for $ f(n) = n/2-\lfloor n/2\rfloor$. As a more striking example, the characteristic function of the Fibonacci numbers can also be expressed as a generalised polynomial (see Example \ref{ex:lin-rec-is-gp}). We  refer to generalised polynomials that take non-zero values on a set of density $0$ as \emph{sparse generalised polynomials}.

In a series of papers \cite{Haland-1993}, \cite{Haland-1994}, \cite{HalandKnuth-1995}, \Haland\ and Knuth studied the equidistribution modulo $1$ of certain generalised polynomials of a specific form. In a more general situation, Bergelson and Leibman \cite{BergelsonLeibman2007} studied the distribution of values of arbitrary generalised polynomials. They proved, among other things, that any generalised polynomial $g \colon \ZZ \to \RR^d$ is well-distributed inside a (parametrised) piecewise polynomial surface (for precise definitions, see \cite{BergelsonLeibman2007}). A very general result on equidistibution modulo $1$ was obtained by Leibman \cite{Leibman-2012}.

Much of the existing theory does not distinguish between two generalised polynomials which differ on a set of density $0$. In particular, most of the results mentioned above are vacuous for sparse generalised polynomials, which from this point of view are indistinguishable from the constant zero function. 

In the present note we propose to study some combinatorial structures which are related to sparse generalised polynomials. Our focus is restricted  mainly to $\{0,1\}$-valued sequences, which can be identified with subsets of $\ZZ$ in a natural way. We will say that a set $E \subset \ZZ$ is generalised polynomial if its characteristic function  is a generalised polynomial; the set $E$ is sparse if its characteristic function is sparse. We ask which combinatorial structures can be found in sparse generalised polynomial sets.

Recall that an \IP\ set is a set containing the set of finite sums of a sequence of positive integers, i.e.\ a set of the form $\set{\sum_{i \in \alpha} n_i }{ \emptyset \neq \alpha \subset \NN,\ \abs{\a} < \infty}$, where $(n_i)_{i \in \NN} \subset \NN$. We are now ready to state our main result.

\begin{alphatheorem}\label{thm:main-A}
	Suppose that $E$ is a sparse generalised polynomial set. Then $E$ does not contain an \IP\ set.	
\end{alphatheorem}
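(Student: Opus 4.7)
The plan is to argue by contradiction: suppose $E$ contains the \IP\ set $\FS(n_i) := \set{\sum_{i \in \alpha}n_i}{\emptyset \ne \alpha \subset \NN,\ \abs{\alpha} < \infty}$ generated by positive integers $(n_i)_{i \ge 1}$, and set $g := 1_E$.

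The key input I would use is an \IPd-recurrence principle for generalised polynomials: for any bounded generalised polynomial $h \colon \ZZ \to \RR$ and $\e > 0$, the set $\{n \in \ZZ : |h(n) - h(0)| < \e\}$ meets every \IP\ set. This should be extractable from the Bergelson--Leibman representation of $h$ as $h(n) = F(T^n x)$ for a nilrotation $T$ on a nilmanifold with $F$ Riemann-integrable, combined with nilpotent Hindman-type recurrence to the base point $x$. Applied to $g = 1_E$ with $\e = \tfrac{1}{2}$, the resulting \IPd\ set is $L = \{n : g(n) = g(0)\}$.

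A case split on whether $0 \in E$ should then close the argument. If $0 \notin E$, then $L = \ZZ \setminus E$ is \IPd, so $(\ZZ \setminus E) \cap \FS(n_i) \ne \emptyset$, directly contradicting $\FS(n_i) \subseteq E$. If $0 \in E$, then $L = E$ itself is \IPd, and I would use the elementary fact that every \IPd\ set is syndetic: were $E$ to have arbitrarily long gaps $[a_k, a_k + L_k]$ with $L_k \to \infty$, one could choose $n_k$ inside the $k$-th gap, growing so rapidly that $L_{k+1}$ exceeds $n_1 + \dots + n_k$; then every finite sum $\sum_{i \in \alpha} n_i$ lands inside some gap, producing an \IP\ set disjoint from $E$. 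Syndetic sets have positive lower Banach density, while sparse sets have density $0$ and hence lower Banach density $0$, yielding the contradiction in this case too.

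The principal obstacle is establishing the \IPd-recurrence principle in the generality needed here: $g = 1_E$ corresponds under Bergelson--Leibman to $F = 1_A$ for a semi-algebraic subset $A$ of a nilmanifold, so $F$ is discontinuous along $\partial A$, which has Haar measure zero but may well contain the base point $x$. The naive argument ``$T^{m_j} x \to x$ implies $F(T^{m_j} x) \to F(x)$'' is therefore not automatic. To fix this I would exploit the semi-algebraic (hence Jordan-measurable) structure of $A$ and refine by a Hindman partition argument: decompose the nilmanifold into finitely many semi-algebraic cells on which $F$ is continuous, partition $\FS(n_i)$ according to which cell contains $T^n x$, and by Hindman's theorem pass to a sub-\IP\ subsequence that remains in a single cell of continuity, thereby reducing to the standard recurrence case.
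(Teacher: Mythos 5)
Your proof rests on the ``IP$^*$-recurrence principle'' that $\{n:|h(n)-h(0)|<\e\}$ is IP$^*$ for every bounded generalised polynomial $h$ and every $\e>0$; this is false. Take $h(n)=\fp{n\alpha}$ with $\alpha$ irrational and $\e=1/2$: choosing $n_i$ inductively so that $\fp{n_i\alpha}>1-2^{-i-1}$, one checks that $\fp{n_\beta\alpha}=\fp{\sum_{i\in\beta}\fp{n_i\alpha}}>1/2$ for every finite $\beta\subset\N$, so $\FS(n_i)$ is disjoint from the set you need to be IP$^*$. The IP recurrence theorem (Theorem~\ref{thm:IPrecurrence}) produces only \emph{two-sided} recurrence $\fpa{n\alpha}<\e$, whereas your principle silently demands that the orbit return from a prescribed side. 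The proposed repair via semialgebraic cells of continuity does not close this gap: if $T^{n_\a}x\to x$ along a sub-IP ring with all $T^{n_\a}x$ in a fixed cell $C$, one only learns $x\in\overline{C}$, not $x\in C$, so $F(T^{n_\a}x)$ may still differ from $F(x)$. And here the discontinuity is unavoidable rather than incidental: since $\inter S=\emptyset$ we have $S\subset\partial S$, so $F=1_S$ is discontinuous at \emph{every} point of $S$, in particular at the (possibly shifted) base point. No refinement into cells can place the base point in the interior of a cell on which $F\equiv 1$. There is also a second difficulty in the $0\in E$ branch: to conclude $E$ is IP$^*$ one must intersect $E$ with an arbitrary IP set $\FS(m_j)$ having no relation to $E$, and Hindman's theorem applied to $\FS(m_j)$ will generically select a cell lying in the complement of $S$.

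The paper avoids pointwise recurrence entirely and instead exploits algebraic rigidity. After a reduction close in spirit to your opening move (replacing the base point by an IP limit so that $g^n\Gamma\in S$ for $n$ in an IP set with $S$ closed), it lifts the orbit to the fundamental domain via Lemma~\ref{lem:conv-of-fp-to-0}, takes the Zariski closure of $S$, and uses Hilbert's basis theorem to manufacture a nonempty algebraic set $R\subset G^{\circ}$ with empty interior invariant under the maps $x\mapsto g^{n_\a}x\ip{g^{n_\a}}^{-1}$ (Step~\ref{prop:S:main-3}). A variable-separation result for fractional parts along IP sets (Proposition~\ref{lem:variable-separation}) upgrades this to invariance under right-multiplication by $\fp{g^n}$, and Proposition~\ref{lem:fp-closure} shows these fractional parts generate all of $G^{\circ}$, forcing $R=G^{\circ}$ and contradicting $\inter R=\emptyset$. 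It is precisely this algebraic machinery, absent from your sketch, that overcomes the boundary discontinuity blocking the naive recurrence argument.
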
 

A slight generalisation of this result is a key component in the companion paper \cite{ByszewskiKonieczny}, which proves a rigid structure theorem for all sequences which are simultaneously generalised polynomial and automatic (relevant definitions can be found therein).

Our methods rely on the link between the generalised polynomials and orbits on nilmanifolds that was established by Bergelson and Leibman (Theorem \ref{BLnilgenpolythm}). Specifically, we show that the set of times at which a linear orbit on a nilmanifold visits a semialgebraic set  cannot be an \IPS\ set, unless the set has nonempty interior (Theorem \ref{thm:S:main-1}). (The notion of an \IPS\ set generalises the notion of a shifted \IP\ set, see Definition \ref{def:IPS-set}.)

We denote the fractional part of a real number $r$ by $\fp{r}=r-\lfloor r\rfloor $, and the nearest integer to $r$ by $\lbr r \rbr = \lfloor r+1/2\rfloor$. 

We are interested in ``natural'' examples of sparse generalised polynomials. One of the simplest examples is the set of integers $n$ such that $\fp{n \varphi} < 1/n$, where $\varphi = (1 + \sqrt{5})/2$ is the golden ratio. This set is closely connected to the Fibonacci numbers. It is not difficult to show that sets of the form $\set{n \in \N}{ 0 < \fp{q(n)} < n^{-c} }$ are sparse generalised polynomials if $q(n)$ is a generalised polynomial and $c$ is rational and positive. For sequences of this form, a much simpler proof of Theorem \ref{thm:main-A} is possible (Proposition \ref{prop:IP-auto-vs-special-gp}).

Because of the intimate connection between continued fractions and diophantine approximation, it is not particularly surprising that for a quadratic irrational $\beta > 1$ of norm $\pm 1$, the set $\set{ \br{\beta^i} }{i \in \NN_0}$ is generalised polynomial (see Example \ref{ex:lin-rec-is-gp} and Proposition \ref{thm:quadraticPisotisGP}). It is considerably more surprising that a similar result holds also for certain algebraic integers of degree three (see Theorem \ref{LAcrime}).

\begin{alphatheorem}\label{thm:main-B} Assume that $\beta>1$ is either: \begin{enumerate}
\item a root of an equation $\beta^2=a\beta \pm 1$, where $a$ is an integer; or
\item  the unique real root of an equation $\b^3 = a \b^2 + b \b + 1$, where $a,b$ are integers and either \textup{(}$a \geq 0$ and $0 \leq b \leq a+1$\textup{)} or \textup{(}$a \geq 2$ and $b = -1$\textup{)}.\end{enumerate}
 Then the set $\set{ \br{\b^i}}{i \in \NN_0}$ is generalised polynomial.
\end{alphatheorem}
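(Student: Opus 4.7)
The plan is to produce, for each $\b$ satisfying the hypotheses, an explicit diophantine characterisation of
\[ S := \set{\br{\b^i}}{i \in \NN_0}, \]
and then rewrite this characterisation as a generalised polynomial using floor-function expressions. In both cases the hypotheses amount to saying that $\b$ is a Pisot number, i.e.\ its remaining Galois conjugates lie strictly inside the open unit disc: for (i) this is immediate from the quadratic formula, while for (ii) a short case analysis in $(a,b)$ is required. Setting $T_i := \Tr(\b^i) \in \ZZ$, which satisfies the linear recurrence dictated by the minimal polynomial of $\b$, one has $\br{\b^i} = T_i$ for all sufficiently large $i$, since the conjugate contributions are $O(\rho^i)$ with $\rho := \max_{j\geq 1}|\b_j| < 1$. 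The finitely many exceptional small values of $i$ are absorbed by modifying the generalised polynomial on a finite set.

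For part (i), let $\bar\b$ be the conjugate of $\b$. The telescoping identity
\[ T_i \b - T_{i+1} = (\b - \bar\b)\bar\b^i \]
immediately gives $\fpa{T_i \b} = O(\b^{-i}) = O(1/T_i)$. Conversely, any $m \in \NN$ with $\fpa{m\b} < 1/(2m)$ is a convergent denominator of the continued fraction of $\b$ by Legendre's theorem, and for the quadratic surds in (i) this expansion is purely periodic of a very specific shape, so its convergent denominators are exactly the $T_i$ (up to finite shift). Thus $S$ differs from $\set{m \in \NN}{\fpa{m\b} < C/m}$ by at most a finite set, and the latter is a sparse generalised polynomial of the type noted just before Theorem~\ref{thm:main-A}: a finite union of sets of the form $\set{m}{0 < \fp{q(m)} < m^{-1}}$ with $q(m) = m\b$ (with minor bookkeeping to handle the $\fp{m\b}$-close-to-$1$ side).

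For part (ii), let $\b_1,\b_2$ be the other pair of conjugates with $\rho := |\b_1| = |\b_2| < 1$. The analogous identities
\[ T_i\b - T_{i+1} = \sum_{j=1}^{2}(\b-\b_j)\b_j^i, \qquad T_i\b^2 - T_{i+2} = \sum_{j=1}^{2}(\b^2-\b_j^2)\b_j^i \]
show that for $m = T_i$ both $\fpa{m\b}$ and $\fpa{m\b^2}$ are $O(\rho^i) = O(m^{-\gamma})$ with $\gamma := \log(\rho^{-1})/\log\b > 0$. The main obstacle is the converse: one must show that if $m \in \NN$ is sufficiently large and $\fpa{m\b}, \fpa{m\b^2} \leq C m^{-\gamma'}$ for appropriate $C,\gamma' > 0$, then $m \in S$. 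I would argue this by passing to the rank-$3$ lattice $\Phi(\ZZ[\b]) \subset \RR \times \CC$, where $\Phi(\a) := (\a, \sigma_1(\a))$ combines the real and one complex embedding of $\QQ(\b)$. The two bounds produce nonzero lattice elements $\Phi(m\b - \br{m\b})$ and $\Phi(m\b^2 - \br{m\b^2})$ with exponentially small real part; the Pisot property together with the multiplicative lower bound $|\Norm(\a)| \geq 1$ on nonzero integers $\a \in \ZZ[\b]$ forces their complex parts to grow in a controlled way, and a Minkowski-type argument restricts the allowed values of $m$ to be precisely those of the form $T_i$, $i \in \NN_0$.

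Once this characterisation is in place, the indicator function of $S$ is realised as a generalised polynomial exactly as in (i): the intersection of $\set{m}{\fpa{m\b}<Cm^{-\gamma'}}$ and $\set{m}{\fpa{m\b^2}<Cm^{-\gamma'}}$ is a sparse generalised polynomial (each factor being of the form $\set{m}{0 < \fp{q(m)} < m^{-c}}$ for a generalised polynomial $q$ and a positive rational $c$), and the finite discrepancy with $S$ is absorbed into the final formula.
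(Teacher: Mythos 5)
Your strategy coincides with the paper's in outline: characterise $S = \set{\br{\b^i}}{i\in\NN_0}$ by small fractional parts of $m\b$ (and in the cubic case also $m\b^2$), observe that such a characterisation is a generalised polynomial via Lemma~\ref{lem:gen-poly-set-of-zeros}, and use Proposition~\ref{przepraszamzetakpozno} to replace $\br{\b^i}$ by any convenient integer linear recurrence with the same characteristic polynomial. However, in both cases you gloss over exactly the points where the real work happens, and in each case the claim you leave unproved is false or much harder than you suggest.

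For part~(i) in the norm $+1$ case, i.e.\ $\b^2 = a\b - 1$, the continued fraction of $\b$ is $[a-1;\overline{1,a-2}]$, so its convergent denominators $(q_j)$ satisfy an alternating pair of second-order recurrences, not the recurrence of the minimal polynomial of $\b$. Only the odd-indexed subsequence $(q_{2i+1})$ satisfies $q_{2i+3} = aq_{2i+1} - q_{2i-1}$ and hence corresponds to $(T_i)$; the even-indexed $q_{2i}$ are genuinely different. So the set $\set{m}{\fpa{m\b} < C/m}$ is \emph{not} equal to $\set{T_i}{i}$ up to a finite set, contrary to your claim that the convergent denominators are exactly the $T_i$. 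The paper handles this by isolating $\{q_{2i+1}\}$ inside $E'$ via the auxiliary condition $\lbr wn\rbr\notin E'$ for a specific $w$, and also separately handling the case $a=3$ by passing to $\b^2$. Without such a device your characterisation simply picks out the wrong set.

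For part~(ii), the step you label "the Pisot property together with the multiplicative lower bound $|\Norm(\a)|\geq 1$ forces... a Minkowski-type argument restricts the allowed $m$ to be precisely those of the form $T_i$" is exactly the difficult content, and your sketch is not a proof of it. It amounts to a structure theorem for the best simultaneous Diophantine approximations of the point $(\b^{-1},\b^{-2})$, and it is false in general for a sup-norm-style condition $\max(\fpa{m\b},\fpa{m\b^2}) < Cm^{-\gamma'}$: the correct statement requires a particular norm (the Rauzy norm $N(x) = |(\a+b/\b)x_1 + x_2/\b|$) and the precise threshold $m_q = N(\delta(q))$, for which the paper invokes the Hubert--Messaoudi theorem (Theorem~\ref{bambambabam}) as a black box. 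The paper then has to perform further non-trivial bookkeeping — constructing an increasing generalised polynomial $g(q)$ which reproduces $m_{R_i}^{-2}$ at $q=R_i$, and rewriting $N_0(q\theta)$ itself as a generalised polynomial $h(q)$ using the imaginary-part trick — none of which is subsumed by "the intersection of two sets of the form $\set{m}{0<\fp{q(m)}<m^{-c}}$''. In short, your outline is the right shape but elides the theorem doing the heavy lifting, and replaces it with an argument you haven't carried out and which, as stated, would need the wrong norm.
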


In the companion paper \cite{ByszewskiKonieczny}, we show that if there are non-ultimately periodic $k$-automatic sequences which are simultaneously generalised polynomials, then the set of powers of $k$ is generalised polynomial. In light of these examples, we find it pertinent to ask for which $\b > 1$ is the set $\set{ \br{\b^i}}{i \in \NN_0}$ generalised polynomial (Question \ref{Q:Pisot}). The question is of particular interest if $\b$ is an algebraic integer, or, more specifically, a Pisot number.

The final main result of this paper says that any sufficiently sparse set is generalised polynomial. For this reason, it seems unlikely that a comprehensive understanding of sparse generalised polynomials is possible. This result is perhaps expected. In \cite[Example 3.4.2]{BergelsonLeibman2007}, the set $S_a=\{n\in \N \mid 0 < \br{an} <1/n\}$ is considered for $a\in \R$. The authors note that for the choice of $a=\sum_{n=1}^{\infty} 2^{-(2^n-1)}$, we have $2^{2^n-1}\in S_a$ for $n\in \N$. However, in this construction it is not clear if $S_a$  contains any other elements. We show that the construction can be modified in order to produce the following general result.

\begin{alphatheorem}\label{thm:main-C}
	There exists a constant $C > 0$ such that for any sequence $(n_i)_{i \geq 0}$ such that $n_0 \geq 2$ and $n_{i+1} \geq n_{i}^C$ for all $i \geq 0$, the set $E = \set{n_i}{i \in \NN_0}$ is generalised polynomial.
\end{alphatheorem}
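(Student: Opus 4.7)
The plan is to refine the construction from Example 3.4.2 of \cite{BergelsonLeibman2007} into a real number $\alpha \in (0,1)$ adapted to the sequence $(n_i)$ so that, for a suitable constant $c > 0$,
\[
S_\alpha := \set{n \in \NN}{0 < \norm{\alpha n} < 1/n^c}
\]
coincides with $E = \set{n_i}{i \in \NN_0}$ up to a finite symmetric difference. Since the introduction records that such sets are sparse generalised polynomials, and a finite symmetric difference can be absorbed by adding the characteristic function of a finite set, this will imply the theorem.

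I would build $\alpha$ by successive rational approximation. Set $\alpha_0 := 1/n_0$ and, inductively, $\alpha_k := \alpha_{k-1} + \delta_k$, choosing $\delta_k \in (0, 1/n_k]$ so that $\alpha_k n_k \in \ZZ$; this can always be done, and enforces that each ``tail residue'' $\varepsilon_j := \alpha n_j - \br{\alpha n_j}$ is positive, precluding sign cancellations later. The hypothesis $n_{k+1} \geq n_k^C$ makes $\sum_k \delta_k$ converge absolutely, defining $\alpha$, and a geometric tail estimate yields
\[
\varepsilon_j = \sum_{k>j} \delta_k n_j \leq 2 n_j/n_{j+1} \leq 2/n_j^{C-1},
\]
placing $n_j$ in $S_\alpha$ for any $c < C-1$.

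The bulk of the proof is the converse inclusion: $n \notin E$ must force $\norm{\alpha n} \geq 1/n^c$. I would argue by induction on $k$ with $n \in [n_{k-1}, n_k)$, writing $n = q n_{k-1} + r$ with $0 \leq r < n_{k-1}$ and $1 \leq q < n_k/n_{k-1}$, so $\alpha n \equiv q \varepsilon_{k-1} + \alpha r \pmod{1}$. When $r \neq 0$, the induction hypothesis on $\alpha r$, combined with the uniform positivity of the $\varepsilon_i$'s (which prevents accidental cancellation), provides the required lower bound. The delicate case is $r = 0$, namely $n = q n_{k-1}$ with $q \geq 2$: here $\norm{\alpha n} = \norm{q \varepsilon_{k-1}}$ scales linearly in $q$ before any modular wrap-around, and one must ensure $q \varepsilon_{k-1}$ does not accidentally approach an integer for any $q$ in the window $[2, n_k/n_{k-1})$. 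I would secure this by refining the choice of $\delta_k$ so that $\varepsilon_{k-1}$ is constrained to a prescribed range (roughly $(1/(2^{c+1} n_{k-1}^c),\, 1/n_{k-1}^c)$) and is moreover sufficiently Diophantine---for instance by arranging its continued fraction expansion to have no small convergents in the relevant interval of denominators.

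The principal obstacle is this multiplicative control: it is exactly the phenomenon flagged in the introduction in connection with the Bergelson--Leibman example, where one knows $2^{2^n-1}\in S_a$ but cannot a priori decide whether $S_a$ is strictly larger. The Diophantine refinement of $\delta_k$ sketched above is designed to resolve this, and the constant $C$ in the statement has to be taken sufficiently large both to guarantee convergence of the inductive scheme and to leave enough room for positioning $\varepsilon_{k-1}$ while absorbing the inductive tail errors.
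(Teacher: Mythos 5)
Your high-level plan is the same as the paper's (Proposition~\ref{prop:nicely-sparse=>gen-poly} plus the interpolation in the proof of Theorem~\ref{cor:very-sparse=>gen-poly}): construct an $\alpha$ adapted to $(n_i)$ via nested rational approximations, express $E$ up to a finite set as $\{n : \norm{\alpha n} \text{ small}\}$, and invoke Lemma~\ref{lem:gen-poly-set-of-zeros}/Proposition~\ref{prop:special-gp-is-gp}. However, the step that actually makes the construction work --- showing that no $n \notin E$ sneaks into the small-fractional-parts set --- is precisely the part you flag as ``the principal obstacle'' and then leave unresolved. The proposed ``Diophantine refinement'' of the $\delta_k$ (arranging $\varepsilon_{k-1}$ to have controlled continued fraction behaviour simultaneously with the nesting constraints and the inductive tail estimates) is exactly where the difficulty lives, and you have not shown that it can be carried out; as written, this is a gap, not a proof.

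The paper sidesteps your inductive decomposition $n = q n_{k-1} + r$ entirely, and in particular avoids the wrap-around problem for large $q$, by three devices you do not use. First, it replaces your one-sided condition $0 < \norm{\alpha n} < n^{-c}$ by a \emph{two-sided window} $\frac{1}{4}n^{-C+1} \leq \norm{n\alpha} \leq \frac{1}{2}n^{-C+1}$, arranged so that $\alpha$ sits in nested intervals $I_i = [\frac{m_i}{n_i} + \frac{1}{4}n_i^{-C}, \frac{m_i}{n_i} + \frac{1}{2}n_i^{-C}]$; the lower endpoint together with a short computation rules out all multiples $n = kn_i$, $k \geq 2$, in one line. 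Second, it chooses the numerators $m_i$ \emph{coprime} to $n_i$ (via Lemma~\ref{lem:coprimes-in-intervals}), which guarantees, by Legendre's theorem on best approximations, that every $n$ in the candidate set corresponds to a continued-fraction convergent denominator $q_j$ of $\alpha$ and that each $n_i$ actually \emph{is} such a denominator; your construction with $\alpha_k n_k \in \ZZ$ does not give this. Third, any convergent denominator $q_j$ satisfying $|\alpha - p_j/q_j| \leq \frac{1}{2}q_j^{-C}$ forces the jump $q_{j+1} \geq q_j^{C-1}$, so a spurious convergent between $n_i$ and $n_{i+1}$ would imply $n_{i+1} \geq n_i^{(C-1)^2}$, which contradicts the \emph{upper} growth bound $n_{i+1} < n_i^{2D}$. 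This is the other structural ingredient missing from your plan: the proposition requires a two-sided growth hypothesis $n_i^D < n_{i+1} < n_i^{2D}$, and Theorem~C (which has only a one-sided lower bound) is then obtained by interpolating $(n_i)$ into a denser sequence $(n_j')$ with controlled ratios, applying the proposition to two slightly perturbed interpolations, and intersecting the two resulting generalised polynomial sets. You never explain how to get from $n_{i+1} \geq n_i^C$ to a situation your converse-inclusion argument could hope to handle, and in fact your sketch of the $r \neq 0$ case also degrades once $q\varepsilon_{k-1}$ wraps past $1/2$, which happens whenever $n_k/n_{k-1}$ is large; so the same interpolation step is needed in your framework too, and it is absent.

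In short, the outline is right but the core of the argument --- excluding spurious elements --- is missing, and the paper's Legendre-plus-coprimality-plus-jump argument, together with the interpolation step, is exactly what fills that hole.
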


The paper is organised as follows. In Section \ref{sec:DEF} we review background material; in particular we discuss connections between dynamics on nilmanifolds and generalised polynomials. In Section \ref{sec:IPS-GP} we introduce \IPS\ sets and formulate a stronger version of Theorem \ref{thm:main-A}. In Section \ref{sec:S-GENPOLY} we prove Theorem \ref{thm:main-A}, and obtain some results on the distribution of fractional parts of polynomials on nilmanifolds which are possibly of independent interest. In Section \ref{sec:Examples} we discuss a specific class of sparse generalised polynomials related to small fractional parts, provide examples of such sequences, and prove Theorem \ref{thm:main-C}. In Section \ref{sec:Exponential} we discuss sets of values of certain sequences of exponential growth, and we prove Theorem \ref{thm:main-B}.

\subsection*{Acknowledgements.} 

The authors thank Ben Green for much useful advice during the work on this project, Vitaly Bergelson and Inger \Haland\ Knutson for valuable comments on the distribution of generalised polynomials, and Jean-Paul Allouche for comments on related results on automatic sequences.

Thanks also go to Jennifer Brown, Sean Eberhard, Dominik Kwietniak, Freddie Manners, Rudi Mrazovi\'{c}, Przemek Mazur, Sofia Lindqvist, and Aled Walker for many informal discussions.

This research was supported by the National Science Centre, Poland (NCN) under grant no. DEC-2012/07/E/ST1/00185. JK also acknowledges the generous support from the Clarendon Fund and SJC Kendrew Fund for his doctoral studies.

Finally, we would like to express our gratitude to the organizers of the  conference \emph{New developments around $\times 2\ \times 3$ conjecture and other classical problems in Ergodic Theory} in Cieplice, Poland in May 2016 where we began our project.
 
\section{Background and Definitions} \label{sec:DEF}

\subsection*{Notations and generalities}

We denote the sets of positive integers and of nonnegative integers by $\N=\{1,2,\ldots\}$ and $\N_0=\{0,1,\ldots\}$. We denote by $[N]$ the set $[N]=\{0,1,\ldots, N-1\}.$ We use the Iverson's convention: whenever $p$ is any sentence, we denote by $\ifbra{p}$ its logical value ($1$ if $p$ is true and $0$ otherwise). We denote the number of elements of a finite set $A$ by $|A|$.

For a real number $r$, we denote by $\lfloor r \rfloor$ its integer part,  by $\{r\}=r-\lfloor r\rfloor$ its fractional part, by $\lbr r \rbr = \lfloor r+1/2\rfloor$ the nearest integer to $r$, and by $\fpa{r} = |r-\lbr r\rbr|$ the distance from $r$ to the nearest integer.

We use some standard asymptotic notation. Let $f$ and $g$ be two functions defined for sufficiently large integers. We say that $f=O(g)$ or $f \ll g$ if there exists $c>0$ such that $|f(n)|\leq c \abs{g(n)}$ for sufficiently large $n$. We say that $f=o(g)$ if for every $c >0$  we have  $|f(n)|\leq c |g(n)|$ for sufficiently large $n$. 

For a subset $E\subset \N_0$, we say that $E$ has natural \emph{density} $d(A)$ if $$\lim_{N\to \infty} \frac{|E\cap [N]|}{N} = d(A).$$

We now formally define generalised polynomials. 

\begin{definition}[Generalised polynomial]
	The family of \emph{generalised polynomials} is the smallest set of functions $f \colon \ZZ \to \RR$ containing the polynomial maps and closed under addition, multiplication, and the operation of taking the integer part. A set $E\subset \Z$ is called \emph{generalised polynomial} if its characteristic function given by $f(n)=\ifbra{n\in E}$ is a generalised polynomial. 
	
	It is sometimes convenient to consider instead generalised polynomials as functions defined on the set $\N_0$ (and similarly consider generalised polynomial sets $E\subset \N_0$). The precise usage will always be clear from the context.
	\end{definition}

The class of generalised polynomial sets is clearly closed under Boolean operations.

\subsection*{Dynamical systems}
An (invertible, topological) dynamical system is given by a compact metrisable space $X$ and a continuous homeomorphism $T\colon X\to X$. We say that $X$ is transitive if there exists $x\in X$ whose orbit $\{T^n x \mid n\in \Z\}$ is dense in $X$. We say that $X$ is minimal if for every point $x\in X$ the orbit $\{T^n x \mid n\in \Z\}$ is dense in $X$. (Equivalently, the only closed subsets $Y\subset X$ such that $T(Y)\subset Y$ are $Y=X$ and $Y=\emptyset$.) We say that $X$ is totally minimal if the system $(X,T^n)$ is minimal for all $n\geq 1$. 

Let $(X,T)$ be a dynamical system. We say that a Borel measure $\mu$ on $X$ is invariant if for every Borel subset $A\subset X$ we have $\mu(T^{-1}(A))=\mu(A)$. By the Krylov-Bogoliubov Theorem, each dynamical system has at least one invariant measure. We say that a dynamical system in \emph{uniquely ergodic} if it has exactly one invariant measure. We say that an invariant measure $\mu$ on $X$ is ergodic if for every $A\subset X$, $T^{-1}(A)=A$ implies $\mu(A)\in\{0,1\}$. 

If $(X,T)$ is minimal, $x \in X$, and $U \subset X$ is open, then the set $\set{n \in \ZZ}{T^n x \in U}$ is syndetic, i.e.\ has bounded gaps (cf. \cite[Thm.\ 1.15]{Furstenberg1981}).

We will frequently use the fact that a minimal system $(X,T)$ with $X$ connected is totally minimal. (Otherwise, there exists $d \in \NN$ and a closed subset $\emptyset \neq Y \subsetneq X$ such that $T^d(Y) \subset Y$. By the Kuratowski-Zorn Lemma there exists a minimal $Y$ with this property. The sets $Y,T(Y),T^{2}(Y),\dots,T^{d-1}(Y)$ are disjoint, closed, and their union is  $X$. This contradicts $X$ being connected.)

\begin{theorem}[Ergodic theorem for uniquely ergodic systems, \cite{EinsiedlerWard}, Theorem 4.10]\label{thm:ergo-thm-uniform}
	Let $(X,T)$ be a uniquely ergodic dynamical system with an invariant measure $\mu$. Then, for any continuous function $f\colon X\to \CC$, we have 
	$$
		\frac{1}{N} \sum_{n = 0}^{N - 1} f(T^n x) \to \int_X f d\mu
	$$
	as $N \to \infty$. Furthermore, the convergence is uniform in $x \in X$.
\end{theorem}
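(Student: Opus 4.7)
The plan is a standard compactness argument using the weak-$*$ topology on the space $\cM(X)$ of Borel probability measures on $X$. Suppose for contradiction that the conclusion fails. Since the claim concerns uniform convergence in $x \in X$, the negation produces $\e > 0$, a continuous $f \colon X \to \CC$, and sequences $N_k \to \infty$ and $x_k \in X$ with
$$
\left| \frac{1}{N_k} \sum_{n=0}^{N_k - 1} f(T^n x_k) - \int_X f \, d\mu \right| \geq \e.
$$

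First I would form the empirical measures $\mu_k = \frac{1}{N_k} \sum_{n=0}^{N_k - 1} \delta_{T^n x_k} \in \cM(X)$. Since $X$ is compact and metrisable, $\cM(X)$ is itself compact and metrisable in the weak-$*$ topology (Banach--Alaoglu), so after passing to a subsequence we may assume $\mu_k \to \nu$ weak-$*$ for some probability measure $\nu$ on $X$.

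Next I would verify that $\nu$ is $T$-invariant. For any continuous $g \colon X \to \CC$, a direct telescoping computation gives
$$
\int_X g \, d(T_* \mu_k) - \int_X g \, d\mu_k = \frac{1}{N_k}\bigl(g(T^{N_k} x_k) - g(x_k)\bigr),
$$
which tends to $0$ because $g$ is bounded on the compact space $X$. Passing to the weak-$*$ limit yields $\int g \, d(T_*\nu) = \int g \, d\nu$ for every continuous $g$, so $T_* \nu = \nu$ by the Riesz representation theorem.

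By the assumption of unique ergodicity we conclude $\nu = \mu$, and in particular $\int_X f \, d\mu_k \to \int_X f \, d\mu$ along our subsequence, contradicting the lower bound $\e$. There is no substantial obstacle here; the only delicate point is that allowing $x_k$ to vary with $k$ is precisely what gives uniform (rather than merely pointwise) convergence from the same argument.
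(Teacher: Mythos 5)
Your proof is correct and is precisely the standard compactness argument for this result (indeed it is essentially the argument given in the cited reference, Einsiedler--Ward, Theorem 4.10). The paper itself does not reprove this theorem but simply cites it, so there is nothing to compare beyond noting that your reasoning --- negating uniform convergence to extract $\e$, $N_k \to \infty$, $x_k$, forming empirical measures, passing to a weak-$*$ limit, checking $T$-invariance by telescoping, and invoking unique ergodicity --- is the textbook route and is sound.
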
 

By a standard argument that bounds the characteristic function of a set from above and from below by continuous functions, we obtain the following result, frequently used in the sequel.
\begin{corollary}\label{cor:density-uniform}
	Let $(X,T)$ be a uniquely ergodic dynamical system with an invariant measure $\mu$. Then for any $x \in X$ and any $S \subset X$ with $\mu(\partial S) = 0$, the set $E = \set{n \in \NN_0}{T^n x \in S}$  has density $\mu(S)$.
\end{corollary}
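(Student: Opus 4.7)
The plan is to deduce the corollary from Theorem \ref{thm:ergo-thm-uniform} by a standard continuous sandwich argument. The only obstacle is that $\mathbf{1}_S$ is typically not continuous, so the uniform ergodic theorem does not apply to it directly; the hypothesis $\mu(\partial S) = 0$ is exactly what allows one to trap $\mathbf{1}_S$ between continuous functions with arbitrarily close integrals.

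Concretely, fix a compatible metric $d$ on $X$, and for each $\delta > 0$ define
\[
	f^+_\delta(y) = \max\!\bigl(0,\, 1 - d(y,\cl{S})/\delta\bigr), \qquad
	f^-_\delta(y) = 1 - \max\!\bigl(0,\, 1 - d(y, X\setminus\inter{S})/\delta\bigr).
\]
Both are continuous, take values in $[0,1]$, and satisfy $f^-_\delta \leq \mathbf{1}_S \leq f^+_\delta$ pointwise. Moreover, $f^+_\delta - f^-_\delta$ is supported in a $\delta$-neighbourhood of $\partial S$, which decreases to $\partial S$ as $\delta \to 0$; by dominated convergence and $\mu(\partial S) = 0$ we have $\int_X (f^+_\delta - f^-_\delta)\,d\mu \to 0$. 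In particular, for any $\e > 0$ one may fix $\delta$ so small that
\[
	\mu(S) - \e \;\leq\; \int_X f^-_\delta\,d\mu \;\leq\; \int_X f^+_\delta\,d\mu \;\leq\; \mu(S) + \e.
\]

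Applying Theorem \ref{thm:ergo-thm-uniform} to $f^+_\delta$ and $f^-_\delta$ separately, one obtains
\[
	\frac{1}{N}\sum_{n=0}^{N-1} f^-_\delta(T^n x) \longrightarrow \int_X f^-_\delta\,d\mu,
	\qquad
	\frac{1}{N}\sum_{n=0}^{N-1} f^+_\delta(T^n x) \longrightarrow \int_X f^+_\delta\,d\mu,
\]
and since $f^-_\delta(T^n x) \leq \ifbra{T^n x \in S} \leq f^+_\delta(T^n x)$, sandwiching gives
\[
	\mu(S) - \e \;\leq\; \liminf_{N\to\infty}\frac{|E \cap [N]|}{N} \;\leq\; \limsup_{N\to\infty}\frac{|E \cap [N]|}{N} \;\leq\; \mu(S) + \e.
\]
Letting $\e \to 0$ yields $|E\cap[N]|/N \to \mu(S)$, i.e.\ $E$ has density $\mu(S)$. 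Since the convergence in Theorem \ref{thm:ergo-thm-uniform} is uniform in $x$, the resulting density estimate is in fact uniform in the starting point, though this is not needed for the statement.
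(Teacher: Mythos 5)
Your proof is exactly the ``standard argument'' that the paper alludes to without writing out: sandwich $\mathbf{1}_S$ between continuous functions, apply the uniform ergodic theorem to each, and use $\mu(\partial S)=0$ to close the gap. The construction and the overall logic are correct. One small imprecision: the claim that $f^+_\delta - f^-_\delta$ is ``supported in a $\delta$-neighbourhood of $\partial S$'' is not quite true in a general compact metric space (a point $y\in\inter S$ can be within $\delta$ of $X\setminus\inter S$ while being far from $\partial S$); but this does not matter, since $f^+_\delta \downarrow \mathbf{1}_{\cl S}$ and $f^-_\delta \uparrow \mathbf{1}_{\inter S}$ pointwise as $\delta\to 0$, so $f^+_\delta - f^-_\delta \downarrow \mathbf{1}_{\partial S}$ and dominated (or monotone) convergence gives $\int(f^+_\delta-f^-_\delta)\,d\mu\to\mu(\partial S)=0$ directly, which is all that is used.
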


\subsection*{Generalised polynomials and nilmanifolds}\label{sec:DEF-GP}

A nilmanifold is a homogenous space $X=G/\mathord \Gamma$, where $G$ is a nilpotent Lie group and $\Gamma$ is a discrete cocompact subgroup, together with the action  of $G$ on $X$ via left translations. We do not assume that $G$ is connected, however in all the cases that we will consider the connected component $G^{\circ}$ of $G$ will be \emph{simply connected}. We assume this henceforth, since it will substantially simplify the discussion.

We begin by recalling a few basic facts concerning nilpotent Lie groups. We follow the presentation in \cite{BergelsonLeibman2007} and \cite{Malcev1951} which the reader should consult for the proofs. Let $G$ be a connected simply connected nilpotent Lie group. For each $g\in G$, there is a unique one-parameter subgroup $\{g^t\}_{t\in\R}$ of $G$, i.e. a continuous homomorphism $\R \to G$, $t\mapsto g^t$ with $g^1=g$. Consider the lower central series 
$$ G_0 = G_1 \supset G_2 \supset \ldots\supset G_d\supset G_{d+1}=1$$ 
given by $G = G_0 = G_1$ and $G_{i+1}=[G,G_i]$,
 $1\leq i \leq d$. The subgroups $G_i$ are closed and $G_i/\mathord G_{i+1}$ are finite dimensional $\R$-vector spaces with the action of $\R$ given by one-parameter subgroups. 
 
 Let $l_i=\dim_{\R} G_i/\mathord G_{i+1}$, $k_i=\sum_{j=1}^i l_j$, $0\leq i \leq d$. Then $G$ has a \emph{\Malcev\ basis}, i.e. elements $e_1,\ldots,e_k\in G$, $k=k_d$, such that $e_{k_{i-1}+1},\ldots,e_{k_{i}}\in G_i$ and their images in $G_i/\mathord G_{i+1}$ constitute a basis of $G_i/\mathord G_{i+1}$ as a $\R$-vector space.

It follows easily that any element $g\in G$ can be written uniquely in the form $$g=e_1^{t_1}\cdots e_k^{t_k}, \quad t_i\in \R.$$ Furthermore, it is possible to choose a \Malcev\ basis to be compatible with $\Gamma$, i.e. so that $g=e_1^{t_1}\cdots e_k^{t_k}$ is in $\Gamma$ if and only if $t_1,\ldots,t_k \in\Z$. We will always assume that our \Malcev\ bases are compatible with $\Gamma$. A choice of a \Malcev\ basis determines a diffeomorphism $$ \tilde \tau \colon  G \to \R^k, \quad  e_1^{t_1}\cdots e_k^{t_k} \mapsto (t_1,\ldots, t_k).$$
Under this identification, multiplication in $G$ is given by polynomial formulae in variables $t_i$ with rational coefficients. These polynomials take integer values if all the variables are integers.

Let $[0,1)^k=\{(t_1, \ldots,t_k)\in \R^k \mid 0\leq t_i < 1, 1\leq i\leq k\}$ be the unit cube. The preimage $D = \tilde{\tau}^{-1}([0,1)^k) \subset G$ is the \emph{fundamental domain}. For any $g\in G$, there is a unique choice of elements $ \{g \}\in D, [g]\in \Gamma$ (called the \emph{fractional part}  and the \emph{integral part} of $g$) such that $g=\fp{g} \ip{g}$. Note that the elements $\{g\}$ and $[g]$ depend on the choice of the \Malcev\ basis. Since the \Malcev\ basis is compatible with $\Gamma$, the map $g\mapsto \tilde{\tau}(\fp{g})$ factors to a bijection
$$ \tau \colon  G/\Gamma \to [0,1)^k,$$
which will play a crucial r\^ole. While $\tau$ is not continuous, its inverse $\tau^{-1}$ is.

The choice of the \Malcev\  basis also induces a natural choice of a metric on $G$ and $G/\Gamma$ (cf.\ \cite[Definition 2.2]{GreenTao2012}). 

A \emph{horizontal character} on $X=G/\mathord \Gamma$ is a non-trivial morphism of groups $\eta \colon G \to \RR/\ZZ$ with $\Gamma \subset \eta^{-1}(0)$, and is necessarily of the form $\eta(g) = \sum_{i=1}^{l_1} a_i \tau_i(g\Gamma)$, where $\tau=(\tau_1,\ldots,\tau_k)$ and  $a_i \in \ZZ$ are not all $0$. The \emph{norm} of $\eta$, denoted $\norm{\eta}$, is by definition the Euclidean norm $\norm{ (a_i) }_2$. 

A set $A \subset \R^k$ is called semialgebraic if it is a finite union of subsets of $\R^k$ given by a system of finitely many polynomial equalities and inequalities; in particular, $A$ is Borel and either $A$ has nonempty interior or it is of Lebesgue measure zero. A map $p\colon A \to \R^k$ defined on a semialgebraic set $A\subset \R^k$ is called  \emph{piecewise polynomial} if there is a decomposition $A=A_1\cup \ldots \cup A_s$ of $A$ into a finite union of semialgebraic sets such that $p|_{A_i} \colon A_i \to \R^l$ is a polynomial mapping restricted to $A_i$. We call a map $f\colon G \to \R^l$ a \emph{polynomial map} if the composed map $f \circ \tilde{\tau}^{-1} \colon\R^l \to \R^k$ is a polynomial map. While the map $\tilde\tau\colon G \to \R^k$ depends on the choice of a \Malcev\ basis, the concept of a polynomial map on $G$ does not.

A subset $A\subset X= G/\mathord \Gamma$ is called \emph{semialgebraic} if its image $\tau(A)\subset [0,1)^k$ is semialgebraic.  We call a map $p\colon X \to \R^l$ \emph{piecewise polynomial} if it takes the form $p=q \circ \tau$, where $q\colon [0,1)^k \to \R^l$ is a piecewise polynomial map. As before, the concept of a piecewise polynomial map on $X$ does not depend on the choice of a \Malcev\ basis. Note that if $p\colon X \to \R^l$ is a piecewise polynomial map and $g\in G$, then so is $q\colon X \to \R^l$ given by $q(h\Gamma)=p(gh\Gamma)$. 

We now extend the above definitions to not necessarily connected simply connected Lie groups $G$. A \Malcev\ basis of $G$ is simply a \Malcev\ basis of its connected component $G^{\circ}$. Similarly, we define piecewise polynomial maps $p\colon X \to \R^l$ in terms of their restrictions to the connected components of $X$ by demanding that for each $g\in G$ the map $G^{\circ}/\mathord (\Gamma \cap G^{\circ}) \to \R^l$, $h(\Gamma \cap G^{\circ})\mapsto p(gh\Gamma)$, is piecewise polynomial. (It is enough to verify this condition for a single $g$ in each class $g G^{\circ} \in G/\mathord G^{\circ}$.) We extend the notion of the fractional and the integral part to the case of possibly disconnected $G$, but we define and use it only if the nilmanifold $G/\mathord \Gamma$ is nevertheless connected. In this case the map $G^{\circ} \to G/\mathord \Gamma$ is surjective and induces a diffeomorphism $G^{\circ}/\mathord \Gamma' \to G/\mathord \Gamma,$ where $\Gamma'=\Gamma\cap G^{\circ}$. Thus, we may define the fractional part with respect to $G^{\circ}/\mathord \Gamma'$, and define the integral part of $g\in G$ so that $g=\fp{g} \ip{g}$. (We can do this since these notions do not involve dynamics, and depend only on the nilmanifold; in dynamical considerations, we are still interested in the action of the (disconnected) group $G$ on the nilmanifold $X=G/\mathord \Gamma \simeq G^{\circ}/\mathord \Gamma'$.)

Dynamics on nilmanifolds plays an important r\^ole. Any $g \in G$ acts on $X$ by left translation $T_g(h\Gamma) = gh\Gamma$. There is a unique Haar measure $\mu_{X}$ on $G/\mathord \Gamma$ invariant under the left translations. For any $g\in G$, we obtain a dynamical system $(X,T_g)$. By a result of Parry \cite{Parry1969}, the properties of minimality, unique ergodicity, and ergodicity with respect to the measure $\mu_X$ are equivalent. (In fact, all this conditions can be verified on the maximal torus $G/G_2\Gamma \simeq \TT^{l_1}$.)  Furthermore, the action of $G$ on $X$ is distal, i.e. for $x,y\in X $, $x\neq y$, $\inf_{g\in G} \mathrm{dist}(gx,gy)$ is strictly positive.

Generalised polynomials and orbits on nilmanifolds are intimately related. This is explained in particular by the main result of \cite[Theorem A]{BergelsonLeibman2007}.

\begin{theorem}[Bergelson-Leibman]\label{BLnilgenpolythm}\label{thm:BergelsonLeibman}
 \begin{enumerate} 
\item If $X=G/\mathord \Gamma$ is a nilmanifold, $g\in G$ acts on $X$ by left translations, $p\colon X \to \R$ is a piecewise polynomial map, and $x\in X$, then $u \colon \Z \to \R$ given by $u(n)=p(g^n x), n\in \Z$, is a bounded generalised polynomial.
\item If $u\colon \Z \to \R$ is a bounded generalised polynomial, then there exists a nilmanifold $X=G/\mathord \Gamma$, $g\in G$ acting on $X$ by left translations in such a way that the action is ergodic, a piecewise polynomial map $p\colon X \to \R$, and $x\in X$ such that $u(n)=p(g^n x)$, $n\in \Z$.\end{enumerate}\end{theorem}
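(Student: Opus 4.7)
The plan is to prove the two directions separately: part (i) by direct computation in Mal'cev coordinates, and part (ii) by structural induction on the definition of a generalised polynomial.

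For part (i), I would fix a Mal'cev basis $e_1, \ldots, e_k$ compatible with $\Gamma$, so that multiplication in $G$ is expressed in the coordinates $\tilde\tau$ by polynomial formulas with rational coefficients that are integer-valued on integer inputs. The first step is to show that the coordinates of $g^n$ (as a function of $n$) are ordinary polynomials in $n$; this is proved by induction on the nilpotency step, exploiting the one-parameter subgroup $\{g^t\}_{t\in\R}$ together with the fact that on each successive quotient $G_i/G_{i+1}$ the map $t\mapsto g^t$ is linear, so that iterated multiplication adds commutator terms of polynomially growing degree. Hence $\tilde\tau(g^n x) = (P_1(n), \ldots, P_k(n))$ for real polynomials $P_i$. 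To obtain $\tau(g^n x\Gamma)$ I would reduce to the fundamental domain one coordinate at a time, starting from the highest weight: each reduction replaces $P_i(n)$ by $\{Q_i(n)\}$ for a generalised polynomial $Q_i$ obtained by propagating the earlier corrections through the polynomial multiplication formulas, and each such step introduces a floor function. This is precisely the operation that takes us from polynomials to generalised polynomials; composing with the piecewise polynomial $p$ then yields a bounded generalised polynomial representation of $u(n)$.

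For part (ii), the plan is to prove by induction on the construction of $u$ that every bounded generalised polynomial arises from an orbit on some nilmanifold. The base cases (polynomial orbits such as $n\mapsto\{n\alpha\}$) are realised as rotations on a torus. Closure under addition and multiplication is handled by passing to the product nilmanifold $(X_1\times X_2,(g_1,g_2))$ with piecewise polynomial data $p_1\circ\pi_1\pm p_2\circ\pi_2$ or $(p_1\circ\pi_1)(p_2\circ\pi_2)$. The delicate case is closure under the floor operation: if $v(n) = p(g^n x)$ corresponds to an orbit on $Y = H/\Lambda$, then expressing $\{v(n)\}$ (and, via $\{v\}=v-\lfloor v\rfloor$, also $\lfloor v(n)\rfloor$) generally requires a central extension of $H$ by an $\R$-factor, producing a new nilmanifold one step higher in nilpotency on which $\{v(n)\}$ becomes a coordinate of the lifted orbit. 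Finally, ergodicity is obtained by restricting the final representation to the orbit closure $\overline{\{g^n x\Gamma : n\in\Z\}}$, which is itself a sub-nilmanifold by a classical theorem of Leibman, and restricting $p$ to it.

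The main obstacle is the cocycle extension step in part (ii). Explicitly, given $v(n)=p(g^n x)$ for piecewise polynomial $p$ on $Y=H/\Lambda$, one must construct a Lie group extension $\widetilde H$ of $H$ by a central copy of $\R$, a cocompact lattice $\widetilde\Lambda$ extending $\Lambda$, and a lift $\widetilde g$ of $g$, such that the new central coordinate of $\widetilde g^n\widetilde x$ reduced modulo $\Z$ equals $\{v(n)\}$. Producing a cocycle that is integer-valued on $\Lambda$, checking that the extended group remains rational and nilpotent, and verifying that the resulting $\widetilde\tau$ witnesses the piecewise polynomial structure in a form compatible with subsequent inductive steps constitutes the technically most demanding point of the argument, and is the reason the original Bergelson--Leibman proof requires substantial additional machinery beyond the Mal'cev coordinate calculus used for part (i).
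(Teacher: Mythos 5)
This theorem is not proved in the paper at all; it is quoted as an external result (Theorem A of Bergelson and Leibman \cite{BergelsonLeibman2007}), so there is no in-paper argument to compare yours against. Taking your sketch on its own terms: for part (i) the outline is the standard one---$\tilde\tau(g^n x_0)$ is polynomial in $n$ for a lift $x_0$ of $x$, since group multiplication is polynomial in Mal'cev coordinates, and reduction to the fundamental domain introduces floor functions one coordinate at a time---but you omit showing that composing with a \emph{piecewise} polynomial $p$ still produces a generalised polynomial. This requires verifying that the indicator of each semialgebraic piece, evaluated along the orbit, is itself a generalised polynomial; the mechanism is the same as in Lemma~\ref{lem:gen-poly-set-of-zeros}, applied to the bounded generalised polynomials $P(\tau(g^n x))$ arising from the defining polynomials $P$ of the pieces.

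For part (ii) there is a concrete structural gap. You propose to induct on the construction of a \emph{bounded} generalised polynomial, but boundedness is not preserved under passing to subexpressions: for $u(n)=\{q(n)\}=q(n)-\lfloor q(n)\rfloor$ with $q$ a non-constant polynomial, $u$ is bounded while both $q(n)$ and $\lfloor q(n)\rfloor$ are unbounded, so the inductive hypothesis cannot be applied to either piece. Any workable induction must run over \emph{all} generalised polynomials, with a more carefully formulated conclusion (for instance a simultaneous nilmanifold representation of the bounded ``remainders'' appearing in some canonical form), rather than over the bounded ones. You correctly identify the cocycle-extension step realising $\{v(n)\}$ as the technical crux, but you explicitly decline to carry it out. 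So the proposal is a reasonable roadmap but not a proof; given that the original Bergelson--Leibman argument occupies much of a long paper and introduces substantial dedicated formalism, that is understandable, but the writeup should state plainly that it is incomplete and rests on the cited source.
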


We finish by defining \emph{polynomial sequences} with values in nilpotent groups. Strictly speaking, polynomial sequences are defined relative to a filtration; we only work with polynomial sequences with respect to the lower central series.

\begin{definition} Let $G$ be a nilpotent group and let $G= G_0 = G_1 \supset G_2 \supset \ldots \supset G_d \supset G_{d+1}=1$ be its lower central series. A sequence $g\colon \ZZ \to G$ is polynomial if it takes the form
	$$
		g(n) = g_0^{\binom{n}{0}} g_1^{\binom{n}{1}} g_2^{\binom{n}{2}} \dots g_d^{\binom{n}{d}}$$ for some $g_i \in G_i,\ 0 \leq i \leq d$.
\end{definition}

It has been proven by Lazard and Leibman that polynomial sequences form a group with termwise multiplication. For more details, see \cite{Lazard-1954}, \cite{Leibman1998}, \cite{Leibman2002}. This is one of the many reasons why it is often more convenient to work with polynomial sequences, even if one is ultimately interested in linear orbits.

Distribution properties of polynomial sequences have been extensively studied. In the quantitative setting we have the following result. We will also later need a quantitative variant due to Green and Tao (cf.\ Theorem \ref{thm:GreenTao}).

\begin{theorem}[Leibman, \cite{Leibman-2005b}]
	Let $G/\Gamma$ be a nilmanifold, and let $g \colon \ZZ \to G$ be a polynomial sequence. Then either $(g(n)\Gamma)_{n \geq 0}$ is equidistributed in $X$ (with respect to $\mu_X$), or there exists a horizontal character $\eta$ such that $\eta \circ g$ is constant.
\end{theorem}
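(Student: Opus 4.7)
The plan is to reduce the statement to Weyl's classical equidistribution theorem for linear sequences on tori, using the horizontal torus as an intermediary. Write the polynomial sequence as $g(n) = g_0^{\binom{n}{0}} g_1^{\binom{n}{1}} \cdots g_d^{\binom{n}{d}}$ with $g_i \in G_i$, and let $\pi \colon G/\Gamma \to G/(G_2 \Gamma)$ be the canonical projection onto the horizontal torus $\TT^{l_1}$.

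First I would reduce the equidistribution problem from $X$ to the horizontal torus: I claim $(g(n)\Gamma)_{n \geq 0}$ is equidistributed in $X$ if and only if its projection $(\pi(g(n)\Gamma))_{n \geq 0}$ is equidistributed in $\TT^{l_1}$. For linear orbits this is the classical Parry--Leon Green criterion (ergodicity, unique ergodicity, and equidistribution of a translation on a connected nilmanifold are all detected on the maximal torus factor, as already mentioned in the excerpt). To extend this to polynomial sequences, I would argue by double induction on the nilpotency class $d$ of $G$ and the degree of $g$, using Weyl's criterion together with the van der Corput inequality: $(F(g(n)\Gamma))_n$ has vanishing ergodic average provided $(F(g(n+h)\Gamma)\overline{F(g(n)\Gamma)})_n$ does for a positive-density set of $h$. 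The key algebraic input, due to Lazard and Leibman, is that $n \mapsto g(n+h)g(n)^{-1}$ is again a polynomial sequence but with strictly smaller leading data in the filtration, so the inductive hypothesis applies and eventually reduces the problem to linear sequences on a torus.

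Second, I would analyse the projected sequence directly. Since any horizontal character vanishes on $G_2$, and hence so does the projection to the abelianisation, we have $\pi(g_i) = 0$ in $G/G_2$ for $i \geq 2$; thus the projected sequence is the affine sequence
\[
\pi(g(n)\Gamma) = \bar g_0 + n \bar g_1 \qquad \text{in } \TT^{l_1}.
\]
By the classical Weyl equidistribution theorem on the torus, this fails to be equidistributed precisely when there exists a nonzero $k \in \ZZ^{l_1}$ with $\langle k, \bar g_1\rangle \in \QQ$. Choosing $q \in \NN$ with $q\langle k, \bar g_1\rangle \in \ZZ$ and setting $\eta(x\Gamma) = q\langle k, \pi(x\Gamma)\rangle$ produces a horizontal character on $X$ such that $\eta(g(n)\Gamma) \equiv q\langle k, \bar g_0\rangle \pmod 1$ is constant in $n$. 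Combined with Step 1, this proves the dichotomy.

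The principal obstacle is the reduction in Step 1 from equidistribution on the nilmanifold to equidistribution on the horizontal torus for \emph{polynomial} (as opposed to linear) sequences. The linear case is classical, but the polynomial generalisation requires the Lazard--Leibman group structure on polynomial sequences to keep the iterated van der Corput differences within the class, together with a careful induction on a suitably defined complexity measure (degree together with nilpotency class). Once this reduction is in place, the rest of the argument is essentially Weyl on a torus.
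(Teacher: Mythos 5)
The paper does not prove this theorem; it is cited verbatim from Leibman \cite{Leibman-2005b}, so there is no internal proof to compare your attempt against.

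Your sketch follows the standard strategy for this result (essentially the Leibman/Green--Tao approach): reduce equidistribution on $X$ to equidistribution on the horizontal torus by induction on degree and nilpotency class via van der Corput, then use Weyl's criterion on the torus to produce the horizontal character. Step 2 is correct (and note $\langle k, \bar g_1\rangle \in \QQ$ for some $k\neq 0$ is equivalent to $\langle k', \bar g_1\rangle \in \ZZ$ for some $k'\neq 0$, so your formulation is fine). However, Step 1 is not a preliminary reduction; it \emph{is} the theorem, and you defer its entire content to a one-paragraph outline. Two substantive gaps remain there. First, after applying van der Corput one is led to study averages of $F(g(n+h)\Gamma)\overline{F(g(n)\Gamma)}$, and to invoke an inductive hypothesis one must realise the pair $(g(n+h),g(n))$ as a polynomial sequence on an auxiliary nilmanifold of lower complexity --- in Green--Tao's treatment the fibre product $G^{\square} = G\times_{G_2}G$ modulo $\Gamma\times_{\Gamma_2}\Gamma$. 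Merely observing that $n\mapsto g(n+h)g(n)^{-1}$ is again polynomial (Lazard--Leibman) does not by itself give a system to which the induction applies, and one must also relate horizontal characters of the auxiliary nilmanifold back to those of $G/\Gamma$ to propagate the conclusion. Second, the complexity measure that strictly decreases under differencing needs to be defined carefully with respect to an appropriate filtration (Leibman's ``degree with respect to the lower central series'' and Green--Tao's ``total degree'' with respect to a general filtration both work, but the termination argument is delicate and is the reason the proof is nontrivial). Your plan is the right one, but as written it names the difficulty without resolving it.
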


\subsection*{\IP\ sets and \IP\ convergence}

We denote by $\cF$ the set of all finite nonempty subsets $\a \subset \NN$.  For sets $\alpha,\beta \in \cF$, we write $\alpha < \beta$ if $a<b$ for all $a\in \alpha, b\in \beta$. We will extensively use sequences $(n_{\alpha})_{\alpha \in \cF}$ indexed by finite sets of natural numbers. In fact, for a sequence $(n_i)_{i\in \N}$ of integers, we will frequently use the associated sequence $(n_{\alpha})_{\alpha \in \cF}$ given by $n_{\alpha}=\sum_{j\in \alpha} n_j$ and denote its set of finite sums by $\FS(n_i) = \set{ n_\a }{ \a \in \cF}.$

The following notion is widely used and well-studied.

\begin{definition}[\IP\ set]\label{def:IP-set} A set $E\subset \N$ is called an \emph{\IP\ set} if it contains a set of the form  $\FS(n_i)$ for some sequence of natural numbers $(n_i)_{i\in \N}$. Similarly, $E \subset \NN$ is an \IPP\ set if it contains $E_0 + a$ for some $a \in \N_0$ and some \IP\ set $E_0$.  

A set $E\subset \N$ is called an \emph{\IPd\ set} if it intersects nontrivially any \IP\ set.
\end{definition}

The class of \IP\ sets is partition regular, i.e.\ whenever an \IP\ set $E$ is written as a union of finitely many subsets $E=E_1\cup E_2 \cup \ldots \cup E_r$, at least one of the sets $E_i$ is an \IP\ set. This is the statement of Hindman's Theorem (see \cite{Hindman1974}, \cite{Bergelson2010d}, or \cite{Baumgartner-1974}). It follows that an \IPd\ set is an \IP\ set, an intersection of two \IPd\ sets is an \IPd\ set, and an intersection of an \IPd\ set and an \IP\ set is an \IP\ set. (For the first and the third statement, let $A$ be an \IPd\ set and let $B$ be an \IP\ set. Apply Hindman's theorem to the partitions $\N=A \cup (\N\setminus A)$ and  $B=(B\cap A) \cup (B\setminus A)$. The second statement then easily follows.)

\IP\ sets occur naturally in dynamics. The following statement is sometimes known as the \IP\ Recurrence Theorem. Recall that a topological dynamical system $(X,T)$ is distal if  for $x,y\in X$, $x \neq y$, $\inf_{n\in \Z} \mathrm{dist}(T^n x,T^n y) $ is strictly positive. The action by left translation of $g\in G$ on a nilmanifold $X=G/\mathord \Gamma$ is distal.

\begin{theorem}[\cite{Furstenberg1981}, Lemma 9.10]\label{thm:IPrecurrence}
 Let $(X,T)$ be a minimal distal topological dynamical system. Then for every $x\in X$ and every neighbourhood $U$ of $x$, the set of return times $\{n\in \N \mid T^n(x)\in U\}$ is an \IPd\ set.
\end{theorem}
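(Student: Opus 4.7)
The plan is to combine Hindman's theorem (stated earlier in the section) with the distality of $(X,T)$. Fix an arbitrary \IP\ set $A \supseteq \FS(n_i)$; the goal is to produce some $\a \in \cF$ with $n_\a \in A$ and $T^{n_\a} x \in U$.

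\textbf{Step 1 (\IP-convergent subsystem).} I will iterate Hindman's theorem against a shrinking sequence of finite open covers of $X$ and diagonalise to pass to a sub-\IP-system. This produces pairwise disjoint $\a_1 < \a_2 < \dotsb$ in $\cF$ such that, writing $m_i = n_{\a_i}$ and $m_\b = \sum_{i \in \b} m_i$, the inclusion $\FS(m_i) \subseteq \FS(n_i) \subseteq A$ holds, together with a point $y \in X$ with
\[
\lim_{\min \b \to \infty} T^{m_\b} x = y.
\]
At the $k$-th stage the cover has pieces of diameter less than $1/k$, and the bookkeeping is arranged so that sums of later generators remain inside the refinement produced by earlier stages.

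\textbf{Step 2 (Almost-invariance of $y$).} Next I claim $T^{m_\a} y \to y$ as $\min \a \to \infty$. Fix $\a$ in the sub-\IP-system and let $\b \in \cF$ vary with $\min \b > \max \a$, so $\a \cap \b = \emptyset$ and
\[
T^{m_\a}\!\left( T^{m_\b} x \right) = T^{m_{\a \cup \b}} x.
\]
Keeping $\a$ fixed and sending $\min \b \to \infty$, the left-hand side converges to $T^{m_\a} y$ by continuity of the homeomorphism $T^{m_\a}$, while $\min(\a \cup \b) = \min \a$, so Step 1 gives $\mathrm{dist}(T^{m_{\a \cup \b}} x, y) < \e$ provided $\min \a$ is past the threshold for $\e$. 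Passing to the limit yields $\mathrm{dist}(T^{m_\a} y, y) \leq \e$, and hence $T^{m_\a} y \to y$ along the sub-\IP-filter.

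\textbf{Step 3 (Distality closes the argument).} Combining Steps 1 and 2 via the triangle inequality, for every $\e > 0$ there is $\a$ in the sub-\IP-system with $\mathrm{dist}(T^{m_\a} x, T^{m_\a} y) < 2\e$. Hence $\inf_{n} \mathrm{dist}(T^n x, T^n y) = 0$, so the pair $(x,y)$ is proximal. Distality forces $x = y$, whereupon $T^{m_\a} x \to x$; for any sufficiently large $\a$ in the sub-\IP-system one therefore has $T^{m_\a} x \in U$ with $m_\a \in \FS(m_i) \subseteq A$, completing the proof.

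\textbf{Main obstacle.} The principal delicacy will be organising Step 1 so that the diagonalised generators $m_i$ genuinely yield a sub-\IP-system contained in every intermediate refinement (in particular that $\FS(m_i) \subseteq \FS(n_i)$); this is a standard but fiddly diagonal bookkeeping. Conceptually, the argument is a clean interplay of three ingredients: Hindman's theorem supplies the candidate limit $y$, continuity transfers the approximate invariance from the orbit of $x$ to $y$, and distality collapses the resulting proximal pair. Minimality is not actually used in the argument above; it is present to guarantee that the return-time set is meaningfully nonempty in the first place.
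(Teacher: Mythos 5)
Your proof is correct, and since the paper itself gives no proof (it simply cites Furstenberg's book, Lemma 9.10), the right comparison is with the standard argument, which is essentially what you have written. The three moves you use — Hindman plus a diagonalisation to extract an IP ring along which $T^{m_\beta}x$ IP-converges to some $y$, the algebraic identity $T^{m_\alpha}(T^{m_\beta}x)=T^{m_{\alpha\cup\beta}}x$ to transfer this to $T^{m_\alpha}y\to y$, and distality to collapse the resulting proximal pair to $x=y$ — are exactly the ingredients of Furstenberg's proof. The only notational quibble is writing $\lim_{\min\beta\to\infty}$ rather than $\iplim_{\beta\in\mathcal G}$; you should say explicitly that you mean IP-convergence along the ring $\mathcal G=\bigl\{\bigcup_{i\in\gamma}\alpha_i : \gamma\in\cF\bigr\}$, which is what makes the step ``keep $\alpha$ fixed, let $\beta$ with $\min\beta>\max\alpha$ grow, and use $\min(\alpha\cup\beta)=\min\alpha$'' legitimate. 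Your closing observation is also right: minimality plays no role in the argument, and indeed the result holds for distal systems generally; the paper includes minimality as a hypothesis only because all systems it applies the theorem to (ergodic nilrotations) are minimal.
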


A related concept is that of an \IP\ ring. A family $\mathcal{G}\subset \cF$ of subsets of $\N$ is called an \emph{\IP\ ring} if there exists a sequence $\beta=(\beta_i)_{i\in \N}\subset \cF$ with $\beta_1<\beta_2<\ldots$ and such that $$\mathcal{G}=\set{\bigcup_{i \in \gamma} \beta_i }{ \gamma \in \cF}.$$  An equivalent version of Hindman's theorem says that whenever  an \IP\ ring $\mathcal{G}$ is written as a union of finitely many subfamilies $\mathcal{G}=\mathcal{G}_1\cup \mathcal{G}_2 \cup \ldots \cup \mathcal{G}_r$, at least one of the subfamilies $\mathcal{G}_i$ contains an \IP\ ring (cf.\ \cite{Bergelson2010d}, \cite{Baumgartner-1974}.) 

\newcommand{\iplim}{\mathrm{IP-}\!\lim}

There is a natural notion of convergence for sequences indexed by $\cF$, or more generally an \IP\ ring. Let $X$ be a topological space and let $(x_\alpha)_{\alpha \in \cF}$ be a sequence of points of $X$. We say that the sequence $(x_\alpha)_{\alpha \in \cF}$ converges to a limit $x\in X$ along an \IP\ ring $\mathcal{G}$ and we write $$\iplim_{\a \in \mathcal{G}} x_{\alpha}  = x$$ if for any neighbourhood $U$ of $x$ there exists $\alpha_0\in \cF$ such that for all $\alpha \in \mathcal{G}$ with $\alpha>\alpha_0$ we have $x_{\alpha}\in U$. Limits along \IP\ rings are closely related to limits along idempotent ultrafilters in $\N$.

Let $X$ be a compact topological space and let $(x_\alpha)_{\alpha \in \mathcal{F}}$ be a sequence of points of $X$. It is a corollary of Hindman's theorem and a diagonal argument (\cite[Theorem 8.14]{Furstenberg1981} or \cite[Theorem 1.3]{FurstenbergKatznelson-1985}) that there exists an \IP\ ring $\mathcal{G}$ and a point $x\in X$ such that $\iplim_{\a \in \mathcal{G}} x_{\alpha}  = x$.

\section{\IPS\ sets and sparse generalised polynomials}\label{sec:IPS-GP}

In this section, we discuss a slight generalisation of Theorem \ref{thm:main-A} which we will need in the upcoming paper \cite{ByszewskiKonieczny}. We also discuss how it can  be derived from the results concerning nilsystems proved in Section \ref{sec:S-GENPOLY}.

\begin{definition}[\IPS\ set]\label{def:IPS-set}
We say that $E \subset \NN$ is an \IPS\ set if there exist sequences $(n_i)_{i \in \NN} \subset \NN$, $(N_t)_{t \geq 1} \subset \NN_0$ and $(t_0(\a))_{\a \in \cF} \subset \NN$ such that $n_\a + N_t \in E$ for any $\a \in \cF$ and $t \geq t_0(\a)$.
\end{definition}

A translate of an \IP\ set is an \IPS\ set, but not conversely. The interest in \IPS\ sets stems mainly from the fact that automatic sets which do not contain \IPS\ sets can be completely classified (see \cite{ByszewskiKonieczny}; a similar statement for \IPP\ sets does not hold). 

\begin{remark} \IPS\ sets are very natural to consider from the point of view of ultrafilters. The \v{C}ech-Stone compactification $\beta \N$ of $\N$ (regarded as a discrete topological space) consists of ultrafilters on $\N$. There is a natural associative operation on $\beta \N$ which gives it a structure of a left semitopological semigroup. It is not commutative, and the center consists of principal ultrafilters (identified with elements of $\N$). 

It is well known that a set $E\subset \N$ is an \IP\ set if and only if it is belongs to an ultrafilter $p$ that is idempotent (i.e. $p+p=p$). One can immediately conclude that a set is an \IPP\ set if and only if  it belongs to an ultrafilter of the form $n+p=p+n$ for some $n\in \N$ and some idempotent $p\in \beta \N$.  One can show that a set is an  \IPS\ set if and only if it belongs to an ultrafilter of the form $r+p$ for some $r\in \beta \N$. The proofs are quite immediate. (For the latter statement, note that $E \subset \N$ is an \IPS\ set if and only if there is a sequence $(n_i)_{i \in \NN} \subset \NN$ such that the family $\{ (E-n_{\alpha}) \cap \NN \}_{\alpha \in \cF}$ has the finite intersection property. We choose $p, r \in \beta \N$ such that $p+p=p$, $\FS(n_i) \in p$, and $(E-n_{\alpha}) \cap \NN \in r$.)

Thus a set is an \IPP\ set if and only if it belongs to  an ultrafilter lying in the right ideal generated by an idempotent ultrafilter and a set is an \IPS\ set if and only if it belongs to  an ultrafilter lying in the left ideal (or equivalently the two-sided ideal) generated by an idempotent ultrafilter.  For more details about $\beta \N$, see \cite{Bergelson2010d} and references therein. We will not use the ultrafilter interpretation of \IPS\ sets in what follows.
\end{remark}

The following theorem is a strengthening of Theorem \ref{thm:main-A}.

\begingroup
\def\thealphatheorem{A'}
\begin{alphatheorem}\label{thm:main-A1}
	Suppose that $E$ is a sparse generalised polynomial set. Then $E$ does not contain an \IPS\ set.	
\end{alphatheorem}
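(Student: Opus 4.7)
The plan is to reduce the statement to the nilsystem result Theorem \ref{thm:S:main-1} via the Bergelson--Leibman correspondence (Theorem \ref{thm:BergelsonLeibman}), so that Theorem A$'$ becomes essentially a dictionary translation of the dynamical statement "a visit-time set to a measure-zero semialgebraic set cannot be \IPS".

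First, I would argue for contradiction: suppose $E$ is a sparse generalised polynomial set that contains some \IPS\ set. Since $1_E$ is a bounded generalised polynomial, Theorem \ref{thm:BergelsonLeibman}(ii) produces a nilmanifold $X = G/\Gamma$ with $g \in G$ acting ergodically (hence, by Parry's theorem, uniquely ergodically with respect to $\mu_X$), a piecewise polynomial map $p \colon X \to \RR$, and a point $x \in X$ such that $1_E(n) = p(g^n x)$ for all $n$. Letting $S := \{y \in X : p(y) = 1\}$, the definition of a piecewise polynomial map (via $\tau$ and the semialgebraic partition of $[0,1)^k$) makes $S$ a semialgebraic subset of $X$, and $E = \{n \in \NN_0 : g^n x \in S\}$.

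Next, I would use sparseness to force $S$ to have empty interior. By Corollary \ref{cor:density-uniform}, applied to $S$ (whose boundary, being contained in a lower-dimensional real-algebraic variety, is $\mu_X$-null), the set $E$ has density $\mu_X(S)$. As $E$ is sparse, $\mu_X(S) = 0$. The dichotomy recalled in Section \ref{sec:DEF} for semialgebraic subsets of $\RR^k$ (either nonempty interior or Lebesgue measure zero), transported to $X$ through the bijection $\tau$, then forces $\tau(S)$ and hence $S$ to have empty interior.

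At this point the hypothesis that $E$ contains an \IPS\ set means precisely that the visit-time set $\{n \in \NN_0 : g^n x \in S\}$ contains an \IPS\ set, which is exactly the situation ruled out by Theorem \ref{thm:S:main-1} for semialgebraic sets with empty interior. This contradiction completes the proof. The only substantive step is the nilsystem statement \ref{thm:S:main-1} itself; the expected obstacle there (to be handled in Section \ref{sec:S-GENPOLY}) is controlling the behaviour of the orbit $g^{n_\alpha + N_t} x$ along the two-parameter \IPS\ indexing, which I would approach by passing to polynomial sequences, applying an \IP-limit / Hindman-style compactness argument (using the \IP\ ring extraction discussed at the end of Section \ref{sec:DEF}) to obtain a limit point of the orbit that must lie in $\cl{S}$, and then exploiting the distal \IP-recurrence Theorem \ref{thm:IPrecurrence} on the nilsystem to derive that $S$ must contain interior points --- contradicting $\mu_X(S)=0$.
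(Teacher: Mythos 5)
Your reduction of Theorem~\ref{thm:main-A1} to Theorem~\ref{thm:S:main-1} via Theorem~\ref{thm:BergelsonLeibman}(ii), Corollary~\ref{cor:density-uniform}, and the semialgebraic dichotomy is exactly the paper's argument, and it is correct: the proof of Theorem~\ref{thm:main-A1} in the paper consists precisely of this translation to the nilsystem setting followed by the proof of Theorem~\ref{thm:S:main-1}. One caveat worth flagging: the closing sketch you give of how Theorem~\ref{thm:S:main-1} itself is to be established --- extract an \IP\ limit point of the orbit, then invoke the distal \IP\ recurrence theorem to force $S$ to have interior --- does not match the paper's strategy and would not succeed as stated, since \IP\ recurrence only shows that the return-time set is \IPd, which gives a limit point of the orbit in $\cl{S}$ but says nothing about $S$ containing an open set. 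The paper instead lifts the problem to the group $G^{\circ}$ by tracking fractional parts $\fp{g^{n_\a}}$, constructs a nonempty algebraic set $R \subset G^{\circ}$ of empty interior invariant under the maps $x \mapsto g^{n_\a} x \ip{g^{n_\a}}^{-1}$ (Step~\ref{prop:S:main-3}), upgrades this to invariance under $x \mapsto x\fp{g^n}$ using the variable-separation Proposition~\ref{lem:variable-separation} (Step~\ref{prop:S:main-4}), and finally shows via Proposition~\ref{lem:fp-closure} that the fractional parts generate a dense Lie subgroup of $G^{\circ}$, forcing $R = G^{\circ}$ and hence a contradiction. This construction of invariant algebraic subsets of the ambient group, rather than a direct recurrence argument on $G/\Gamma$, is the essential idea you would be missing.
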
 
\endgroup

Any \IPS\ set contains a translate of an $\mathrm{IP}_r$\ set (i.e.\ the set of all finite sums of a finite sequence $n_1,\dots,n_r$) for any $r$. It  is therefore natural to ask if Theorem \ref{thm:main-A1} can be generalised to $\mathrm{IP}_r$\ sets. The answer is negative, as shown by the following example.

\begin{example}
	Let $E = \set{ n \in \N}{ \fpa{n\sqrt{2}} < 1/\sqrt{n}}$. Then $E$ is a sparse generalised polynomial set and $E$ contains sets of the form $\{ km \mid 1\leq k \leq r\}$ for arbitrary $r$. In particular, $E$ contains an $\mathrm{IP}_r$ set.
\end{example}
\begin{proof}
	By standard diophantine approximation, there exist arbitrarily large $m$ such that $\fpa{m \sqrt{2} } < 1/m$. Then $km \in E$ for $k \leq m^{1/3}$. Sparseness of $E$ follows from Proposition \ref{prop:special-gp-is-gp}.
\end{proof}

\begin{remark}\label{BL-implies-weak-A}
	A weaker version of Theorem \ref{thm:main-A} follows immediately by an adaptation of the methods in \cite{BergelsonLeibman2007}. Indeed, adapting the proof of Theorem C therein, one concludes that if $E$ is a sparse generalised polynomials set, then for all $n$ except for a set of (upper Banach) density $0$ the set $\NN \setminus (E-n)$ is {\IPd}, whence $E - n$ contains no \IP\ set. Unfortunately, this weaker statement is not as useful in applications; in particular is not sufficient for the purposes of \cite{ByszewskiKonieczny}.
\end{remark}
	 
\section{The main argument}\label{sec:S-GENPOLY}

\newcommand{\PHI}{P}\newcommand{\PSI}{Q}
In this section, we prove Theorem \ref{thm:main-A}. In the process, we obtain results concerning the distribution of fractional parts of polynomial sequences, such as Propositions \ref{lem:variable-separation} and \ref{lem:fp-closure}, which are possibly of independent interest.

It will be convenient to reformulate Theorem \ref{thm:main-A}. By Theorem \ref{thm:BergelsonLeibman}, a sparse generalised polynomial set $E$ arises as $u(n)=p(g^n a)$ for a nilmanifold $X=G/\mathord \Gamma$, $g\in G$ acting on $X$ ergodically, a piecewise polynomial map $p\colon X \to \R$, and $a\in X$. Let $S=\{x\in X \mid p(x)= 1\}$. Since $E$ is sparse, Corollary \ref{cor:density-uniform} shows that $\inter S =\emptyset$. (As a side remark, we note that it is also easy to see that a sparse genereralised polynomial set has upper Banach density zero.) Thus Theorem \ref{thm:main-A} is equivalent to the following statement.

\begin{theorem}\label{thm:S:main-1}
	Let $X=G/\Gamma$ be a nilmanifold, let $g \in G$ be such that the left translation by $g$ is ergodic, $a \in G$, and let $S \subset X$ be a semialgebraic subset with $\inter S = \emptyset$.  
	Then there does not exist an \IPS\ set $E$ such that 
	\begin{equation}
	\label{eq:S:main-1}
	g^{n } a \Gamma \in S, \qquad n \in E.
	\end{equation}
\end{theorem}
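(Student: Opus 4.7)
The plan is to argue by contradiction: suppose $E$ is an IPS set, witnessed by sequences $(n_i)_{i \in \N}$, $(N_t)_{t \geq 1}$, and $(t_0(\alpha))_{\alpha \in \cF}$ with $g^{n_\alpha + N_t} a \Gamma \in S$ for all $\alpha \in \cF$ and $t \geq t_0(\alpha)$. First, since $S \subset X$ is semialgebraic with empty interior, the stratification of $\tau(S) \subset [0,1)^k$ places it inside the zero set of some nonzero polynomial $P \colon \R^k \to \R$. The condition becomes $f(n) := P(\tau(g^n a \Gamma)) = 0$ for $n \in E$, where $f$ is a bounded generalised polynomial that cannot be identically zero: otherwise, by Corollary~\ref{cor:density-uniform}, the measure-zero set $\{P \circ \tau = 0\}$ would be visited by the orbit with density one, forcing $\mu_X(\{P \circ \tau = 0\}) = 1$ and contradicting $P \neq 0$.

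Next I would separate variables in $f(n_\alpha + m)$. Expanding $g^{n_\alpha + m} = g^{n_\alpha} g^m$ in Malcev coordinates shows that the $n_\alpha$-dependence of $f(n_\alpha + m)$ enters only through finitely many fractional parts $\{\eta(g^{n_\alpha})\}$ for a fixed finite collection of horizontal characters $\eta$ (the quadratic and higher terms of the nilpotent group law being governed by such parts). By iterated application of Hindman's theorem together with compactness of $\TT^r$, I would pass to an IP sub-ring $\mathcal{G} \subset \cF$ along which all these fractional parts converge simultaneously; this is the content I expect of the promised Proposition~\ref{lem:variable-separation}. In the IP-limit one obtains $f(n_\alpha + m) = \tilde f_\infty(m) + \varepsilon_\alpha(m)$, where $\tilde f_\infty(m) = P(\tau(g^m y_\infty))$ arises from the orbit of a limit point $y_\infty \in X$, and $\varepsilon_\alpha(m) \to 0$ uniformly for $m$ in a range that can be coupled diagonally with the threshold $t_0(\alpha)$. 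Substituting $m = N_{t(\alpha)}$ for a suitable choice $t(\alpha) \geq t_0(\alpha)$ respecting this uniformity then yields $\tilde f_\infty(N_t) = 0$ for $t$ in a cofinite tail.

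To finish, I would apply the closure-of-fractional-parts result (Proposition~\ref{lem:fp-closure}) to $\tilde f_\infty$, which itself arises from an ergodic nilsystem orbit. Combined with the freedom to vary $y_\infty$ by reselecting IP sub-rings, this should force the zero set of $\tilde f_\infty$ to be dense in the orbit closure of $y_\infty$ under $T_g$; by minimality this closure is $X$, whence $P \circ \tau \equiv 0$ on $X$, contradicting the choice of $P$. The main obstacle I foresee is the variable-separation step: the floor functions implicit in the Malcev expansion are discontinuous, so obtaining a uniform-in-$m$ IP-limit requires careful coordination of the simultaneous convergence of many fractional parts, and then combining this uniformity with a diagonalisation robust to the fact that $t_0(\alpha)$ may grow unboundedly along every IP sub-ring of $\cF$.
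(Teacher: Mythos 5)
Your outline correctly identifies the two technical pillars the paper advertises (variable separation and closure of fractional parts), and the opening reduction — passing from $S$ semialgebraic with empty interior to the zero set of a nonzero polynomial $P$ — matches the paper. After that, however, the proposal diverges in ways that leave real gaps.

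First, your variable-separation heuristic is wrong. You claim that the $n_\alpha$-dependence of $f(n_\alpha+m)$ ``enters only through finitely many fractional parts $\{\eta(g^{n_\alpha})\}$ for horizontal characters $\eta$.'' This fails already for step-two nilmanifolds: expanding $g^{n_\alpha}g^m$ in Mal'cev coordinates produces cross terms involving \emph{unbounded} integer parts $[\,\cdot\,]$ of lower coordinates (e.g.\ $\lfloor n_\alpha\beta\rfloor$ in the Heisenberg example), which are not controlled by fractional parts of horizontal characters. Controlling these requires exactly the careful bookkeeping with $\gamma(\alpha,\beta) = [g^{n_\beta}][g^{n_\beta}\{g^{n_\alpha}\}]^{-1}$ in the paper's Step~\ref{prop:S:main-3}, and the actual content of Proposition~\ref{lem:variable-separation} (which works coefficient-by-coefficient and invokes the IP polynomial recurrence theorem), not Hindman plus compactness of a torus.

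Second, and more fundamentally, your proposal has no analogue of the paper's central construction: an algebraic set $R\subset G^\circ$ with empty interior that is invariant under the polynomial maps $\Phi_\alpha(x)=g^{n_\alpha}x[g^{n_\alpha}]^{-1}$, obtained by intersecting the sets $R_\beta$ and invoking Hilbert's Basis Theorem to make the intersection finite. This Noetherian step is what turns ``$f$ vanishes along an IP set'' into a rigid algebraic invariance that can eventually be upgraded (via Steps~\ref{prop:S:main-4}--\ref{prop:S:main-5}) to invariance under a whole subgroup of $G^\circ$. Without it, you are left trying to argue that the zero set of $\tilde f_\infty$ is dense in $X$ and then conclude $P\circ\tau\equiv 0$; but a dense zero set does not force vanishing of the \emph{discontinuous} map $P\circ\tau$, and the limit points $y_\infty$ produced by IP-limits along sub-rings of a fixed $\FS(n_i)$ need not be dense in $X$ (indeed the IP Recurrence Theorem pushes them back toward $a\Gamma$). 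Relatedly, Proposition~\ref{lem:fp-closure} is a statement about Lie \emph{subgroups} containing $\{g(n)\}$ for infinitely many $n$; it is not a density statement and cannot be applied in the way you describe without first producing a group, which is precisely what the invariant set $R$ and the stabilizer $H=\{h:Rh\subset R\}$ furnish.

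Third, the handling of the shift $N_t$ is tangled: you attempt to substitute $m=N_{t(\alpha)}$ into an expression whose $\alpha$-dependence you have just eliminated by taking an IP-limit, so the resulting conclusion ``$\tilde f_\infty(N_t)=0$ for cofinitely many $t$'' does not follow from what precedes it. The paper disposes of the $N_t$ at the very start (Step~\ref{prop:S:main-2}): pass to a subsequence so that $g^{N_t}a\Gamma\to b\Gamma$, replace the base point by $b$ and conjugate $\Gamma$, reducing \IPS{} to genuine \IP. Adopting that reduction first would make the rest of your sketch at least cleanly posed, but the two substantive gaps above would remain.
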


We proceed to the proof of Theorem \ref{thm:S:main-1}.

\subsection*{Preliminaries}
In dealing with \IPS\ sets, the following lemma will be useful.

\begin{lemma}[Partition regularity for \IPS\ sets]\label{lem:IPS-partition-regular}
Suppose that $E \subset \NN$ is an \IPS\ set. Then for any finite partition $E = E_1 \cup E_2 \cup \ldots \cup E_r$ one of the sets $E_j$ is an \IPS\ set.
\end{lemma}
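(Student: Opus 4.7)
My plan is to combine a diagonal subsequence extraction with the IP ring version of Hindman's theorem stated in the previous section. Fix witnesses $(n_i)_{i \in \NN}$, $(N_t)_{t \geq 1}$ and $(t_0(\alpha))_{\alpha \in \cF}$ for $E$ being \IPS. For each pair $(\alpha,t)$ with $t \geq t_0(\alpha)$, choose a colour $c(\alpha,t) \in \{1,\dots,r\}$ such that $n_\alpha + N_t \in E_{c(\alpha,t)}$. The goal is to extract, from these colourings, new \IPS\ witnesses contained in a single $E_j$.

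First I would pass to a subsequence $(N_{t_k})_{k \in \NN}$ of $(N_t)$ along which, for every fixed $\alpha \in \cF$, the colour $c(\alpha,t_k)$ is eventually constant. This is a standard diagonal construction: enumerate $\cF = \{\alpha_1,\alpha_2,\dots\}$ and produce nested infinite index sets $T_0 \supset T_1 \supset \cdots$, where $T_k$ is obtained from $T_{k-1}$ by pigeonholing on the $r$ possible values of $c(\alpha_k,t)$ for $t \in T_{k-1}$ with $t \geq t_0(\alpha_k)$. Choosing $t_k$ to be the $k$-th element of $T_k$ then handles every $\alpha$ simultaneously and yields a well-defined colouring $c' \colon \cF \to \{1,\dots,r\}$.

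Next I would apply the IP ring form of Hindman's theorem to the partition $\cF = (c')^{-1}(1) \cup \cdots \cup (c')^{-1}(r)$, noting that $\cF$ itself is an IP ring (generated by the singletons). This yields an IP ring $\mathcal G' = \{\bigcup_{i \in \gamma}\beta_i : \gamma \in \cF\}$ for some $\beta_1 < \beta_2 < \cdots$ in $\cF$ on which $c'$ is constantly equal to some $j \in \{1,\dots,r\}$. Setting $n'_i := n_{\beta_i}$, the disjointness of the $\beta_i$ gives $n'_\gamma = n_{\bigcup_{i \in \gamma}\beta_i}$ for every $\gamma \in \cF$. For each such $\gamma$, writing $\alpha := \bigcup_{i \in \gamma}\beta_i \in \mathcal G'$, we have $c(\alpha,t_k) = j$ for all sufficiently large $k$, and also $t_k \geq t_0(\alpha)$ for all sufficiently large $k$. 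Defining $t'_0(\gamma)$ to be any threshold past which both properties hold and putting $N'_k := N_{t_k}$, the relation $n'_\gamma + N'_k \in E_j$ for every $k \geq t'_0(\gamma)$ exhibits $E_j$ as \IPS.

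The only delicate point is arranging the diagonal extraction so that a single subsequence $(t_k)$ is compatible with the $t_0$-thresholds for every $\alpha \in \cF$ at once; since $\cF$ is countable and there are only finitely many colours, pigeonhole plus diagonalisation handle this cleanly, after which the IP ring form of Hindman's theorem does all the real work. A slicker alternative would invoke the idempotent-ultrafilter description recorded in the remark following Definition \ref{def:IPS-set} (a set $r+p$ with $r \in \beta\NN$ and $p$ idempotent inherits partition regularity directly from the ultrafilter $r+p$), but the authors state they prefer to avoid that machinery.
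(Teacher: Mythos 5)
Your proof is correct and follows essentially the same route as the paper: extract a subsequence of $(N_t)$ by a diagonal/pigeonhole argument so that each $\alpha \in \cF$ gets a well-defined eventual colour, then apply the IP ring form of Hindman's theorem to that colouring of $\cF$ and read off new \IPS\ witnesses inside the resulting IP subring. The paper's proof is a two-line sketch of exactly this ("restricting $N_t$ to a subsequence by means of a familiar diagonal argument \dots the claim now follows directly from Hindman's theorem"), and it even opens with the same remark about the ultrafilter shortcut that you close with.
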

\begin{proof} This is clear from the ultrafilter interpretation of \IPS\ sets. We give a direct proof below.
	Suppose that $(n_i)_{i \in \NN}$, $(N_t)_{t \geq 1}$ and $(t_0(\a))_{\a \in \cF}$ are such that $n_\a + N_t \in E$ for $\a \in \cF$ and $t \geq t_0(\a)$. Restricting $N_t$ to subsequence by means of a familiar diagonal argument, we may ensure that for any $\a$ there is some $j = j(\a)$ such that $n_\a + N_t \in E_j$ for all $t \geq t_0(\a)$. The claim now follows directly from Hindman's theorem.\end{proof}

\newcommand{\fpp}[2]{\fp{#1}_{#2}}
\newcommand{\ipp}[2]{\left[ #1 \right]_{#2}}

The following lemmas are classical. 

\begin{lemma}\label{lem:G2-not-complemented}
	Let $G$ be a nilpotent group (not necessarily Lie), and let $H < G$ be a subgroup of $G$ such that $H G_2 = G$. Then $H = G$.
\end{lemma}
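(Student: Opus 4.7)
The plan is to prove by induction on $i \geq 2$ the stronger statement that $HG_i = G$ for all $i$. Since the lower central series of the nilpotent group $G$ terminates, say at $G_{d+1} = 1$ for some $d$, taking $i = d+1$ in the inductive conclusion will immediately give $H = G$. The base case $i = 2$ is precisely the hypothesis, so the entire content lies in the inductive step.

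For the step from $i$ to $i+1$, I would first observe that $G_{i+1}$ is characteristic (and in particular normal) in $G$, so $HG_{i+1}$ is a subgroup of $G$. It therefore suffices to show that $G_i \subset HG_{i+1}$; and since $G_i$ is generated as a subgroup by commutators $[g, g']$ with $g \in G$ and $g' \in G_{i-1}$, the task reduces to showing that each such generator lies in $HG_{i+1}$.

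Using the inductive hypothesis $HG_i = G$, I would write $g = h_1 z_1$ and $g' = h_2 z_2$ with $h_1, h_2 \in H$ and $z_1, z_2 \in G_i$; a short observation shows that the second factorisation automatically forces $h_2 \in H \cap G_{i-1}$, since $g' \in G_{i-1}$ and $z_2 \in G_i \subset G_{i-1}$. The key calculation is then to compute $[h_1 z_1, h_2 z_2]$ modulo $G_{i+1}$. Because $G_i/G_{i+1}$ lies in the centre of $G/G_{i+1}$ (both $[G, G_i] = G_{i+1}$ and, for $i \geq 2$, $[G_{i-1}, G_i] \subset G_{2i-1} \subset G_{i+1}$), the factors $z_1, z_2$ are central modulo $G_{i+1}$, so a one-line manipulation yields $[h_1 z_1, h_2 z_2] \equiv [h_1, h_2] \pmod{G_{i+1}}$. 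Since $[h_1, h_2] \in H$, we conclude $[g, g'] \in H G_{i+1}$, closing the induction.

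The argument is a textbook nilpotent-group manipulation and presents essentially no obstacle. The only point requiring any care is the observation that the second factorisation can be arranged with $h_2 \in G_{i-1}$: without this, the bracket $[z_1, h_2]$ would only lie in $[G_i, G] = G_{i+1}$ on one side, but the bracket $[h_1, z_2]$ would land in $[G, G_i] = G_{i+1}$ automatically — so in fact both sides collapse, and even this mild subtlety disappears. Everything else is routine.
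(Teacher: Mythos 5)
Your proof is correct and is the standard induction-on-the-lower-central-series argument; the paper does not reproduce a proof but instead cites references (Leibman, Zorin-Kranich) where this argument appears in essentially the same form. One small remark: the observation that $h_2$ lands in $G_{i-1}$ is, as you note yourself at the end, never actually used — centrality of $G_i/G_{i+1}$ in $G/G_{i+1}$ already follows from $[G,G_i]=G_{i+1}$ alone, so the commutator collapses to $[h_1,h_2]$ with no constraint on $h_2$ beyond $h_2\in H$, and the parenthetical about $[G_{i-1},G_i]\subset G_{2i-1}$ is superfluous.
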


For the proof, see e.g. \cite[Lemma 2.5]{Leibman-2005b}, \cite[Lemma 3.4]{Leibman2005}, or \cite[Lemma 1.11]{Zorin-Kranich2013}.

\begin{lemma}\label{lem:poly-seq-in-G-o}
	Let $G/\mathord\Gamma$ be a connected nilmanifold, and assume further that all the nilmanifolds $G_i/\mathord\Gamma_i$ are connected, where $\Gamma_i=\Gamma\cap G_i$. Let $g\colon \ZZ \to G$ be a polynomial sequence. Then there exists a polynomial sequence $h \colon \ZZ \to G$ taking values in $G^{\circ}$ such that $g(n)\Gamma = h(n)\Gamma$.
\end{lemma}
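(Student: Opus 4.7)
The plan is to proceed by induction on the nilpotency class $d$ of $G$. The base case $d=1$ is immediate: $G$ is abelian, and the connectedness hypothesis gives $G = G^\circ \Gamma$. Writing each $g_i = k_i \gamma_i$ with $k_i \in G^\circ$ and $\gamma_i \in \Gamma$, commutativity yields
$$g(n) = \left(\prod_{i} k_i^{\binom{n}{i}}\right) \cdot \left(\prod_{i} \gamma_i^{\binom{n}{i}}\right),$$
so the first factor serves as $h(n)$.

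For the inductive step, I would consider the projection $\pi \colon G \to G/G_d$. The group $G/G_d$ is nilpotent of class $d-1$, and the connectedness hypotheses are inherited: $(G/G_d)/\pi(\Gamma) = G/(G_d \Gamma)$ is a quotient of the connected $G/\Gamma$, while each $(G/G_d)_i/\pi(\Gamma_i) = G_i/(G_d \Gamma_i)$ is a quotient of the connected $G_i/\Gamma_i$. Applying the induction hypothesis to $\pi \circ g$ produces a polynomial sequence $\bar h \colon \ZZ \to (G/G_d)^\circ$ with $\bar h(n) \pi(\Gamma) = \pi(g(n)) \pi(\Gamma)$.

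The lifting step turns $\bar h$ into a polynomial sequence in $G^\circ$. Writing $\bar h(n) = \prod_{i} \bar h_i^{\binom{n}{i}}$ with $\bar h_i \in ((G/G_d)^\circ)_i$, I would lift each $\bar h_i$ to some $h_{0,i} \in (G^\circ)_i$, giving $h_0(n) = \prod_{i} h_{0,i}^{\binom{n}{i}}$, a polynomial sequence in $G^\circ$. The key observation is that $\pi$ restricts to a surjection $(G^\circ)_i \twoheadrightarrow ((G/G_d)^\circ)_i$: for $i=1$ this follows because $G^\circ$ is open and connected, giving $\pi(G^\circ) = (G/G_d)^\circ$; for $i \geq 2$ the centrality of $G_d$ in $G$ yields $(G^\circ G_d)_i = (G^\circ)_i$, whence $((G/G_d)^\circ)_i = (G^\circ)_i G_d/G_d = \pi((G^\circ)_i)$.

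For the correction step, $g(n)^{-1} h_0(n)$ takes values in $G_d \Gamma$ since its image under $\pi$ lies in $\pi(\Gamma)$. Using centrality of $G_d$ together with $G_d \cap \Gamma = \Gamma_d$, the rule $v \delta \mapsto v \Gamma_d$ is a well-defined homomorphism $G_d \Gamma \to G_d/\Gamma_d$, whose composition with $g(n)^{-1} h_0(n)$ is a polynomial sequence in the torus $G_d/\Gamma_d$. Applying the base case to the abelian group $G_d$ with the lattice $\Gamma_d$ (using $G_d = G_d^\circ \Gamma_d$) produces a polynomial sequence $z^\circ(n) \in G_d^\circ \subset G^\circ$ lifting this torus sequence, and then $h(n) = h_0(n) (z^\circ(n))^{-1}$ is the required polynomial sequence in $G^\circ$. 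I expect the main obstacle to be verifying the surjectivity $\pi\colon (G^\circ)_i \twoheadrightarrow ((G/G_d)^\circ)_i$, which is precisely where the assumption that every $G_i/\Gamma_i$ be connected enters essentially, since without it the lower central filtrations of $G$ and $G^\circ$ need not align well enough to allow coefficient-wise polynomial lifts.
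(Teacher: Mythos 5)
The paper itself does not prove this lemma but derives it from \cite[Lemma 4.24]{Zorin-Kranich2013}, so your self-contained induction on the nilpotency class $d$ is genuinely a different route, and the outline — pass to $G/G_d$, lift, correct by the central torus $G_d/\Gamma_d$ — is sound and gives an actual proof where the paper gives only a citation.

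Two points need tightening. First, you assert $\bar h_i \in ((G/G_d)^\circ)_i$ and lift into $(G^\circ)_i$, but the inductive hypothesis (which matches the lemma as stated, deliberately so — see the remark immediately following it) only yields $\bar h(n) \in (G/G_d)^\circ$ for all $n$. Evaluating at $n = 0,1,\ldots$ and solving, each $\bar h_i$ is a word in these values, so $\bar h_i \in (G/G_d)_i \cap (G/G_d)^\circ$, but not necessarily in $((G/G_d)^\circ)_i$. Trying to strengthen the induction to put coefficients in $(G^\circ)_i$ is not preserved by your own construction: the correction $z^\circ$ has coefficients in $(G_d)^\circ$, which need not lie in $(G^\circ)_d$ (this is precisely what the remark after the lemma warns about). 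Fortunately the lift you actually need — from $\bar h_i \in (G/G_d)_i \cap (G/G_d)^\circ$ to some $h_{0,i} \in G_i \cap G^\circ$ — is immediate: pick any preimage $y \in G^\circ$ of $\bar h_i$; since $G_d \subseteq G_i$, the condition $\pi(y) \in G_i/G_d$ forces $y \in G_i$, so $y \in G_i \cap G^\circ$. Second, the claim that $\phi \circ (g^{-1}h_0)$ is a polynomial sequence on the torus is not automatic, because $\phi$ is defined only on the subgroup $G_d\Gamma$, while a priori only the \emph{values} $z(n) = g(n)^{-1}h_0(n)$ are known to land there. One must observe that the Taylor coefficients $z_i$ of $z$ also lie in $G_d\Gamma$: each $z_i$ is a word in $z(0),\ldots,z(i)$, all of which lie in $G_d\Gamma$, so in fact $z_i \in G_i \cap G_d\Gamma = G_d\Gamma_i$; only then does $\phi(z(n)) = \prod_i \phi(z_i)^{\binom{n}{i}}$ exhibit $\phi\circ z$ as polynomial. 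With these two repairs the argument goes through.
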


This follows from \cite[Lemma 4.24]{Zorin-Kranich2013}.

\begin{remark} We do not claim that $h$ is a polynomial sequence in $G^{\circ}$, since possibly $(G^{\circ})_i \subsetneq (G_i)^{\circ}$. In general, one often defines polynomial sequences with respect to a filtration of subgroups. We will not need that, and our polynomial sequences will always be taken with respect to the lower central series filtration. This explains a bit akward formulation of the preceding lemma.\end{remark}

We will also need the following basic fact about invariant algebraic sets. 

\begin{lemma}\label{lem:invariant=>strongly-invariant}
	Suppose that $A \subset \RR^d$ is an algebraic set and $\PHI \colon \RR^d \to \RR^d$ is a surjective polynomial map. If $\PHI^{-1}(A) \subset A$, then also $\PHI(A) = A$.
\end{lemma}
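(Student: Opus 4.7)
The plan is to combine the descending chain condition for real algebraic subsets of $\RR^d$ with surjectivity of $P$ to force equality at every step of the chain of iterated preimages.

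First I would form the descending chain
\[
A \supset P^{-1}(A) \supset P^{-2}(A) \supset \cdots,
\]
whose containments follow by iterating the hypothesis $P^{-1}(A) \subset A$. Each $P^{-k}(A)$ is algebraic: if $A$ is the zero set of polynomials $f_1,\ldots,f_r$, then $P^{-k}(A)$ is the zero set of the polynomials $f_i \circ P^{\circ k}$. Since $\RR[x_1,\ldots,x_d]$ is Noetherian, the associated ascending chain of vanishing ideals $I(P^{-k}(A))$ terminates, so the chain of algebraic sets stabilises: there exists $n \geq 1$ with $P^{-n}(A) = P^{-(n+1)}(A)$.

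The crucial step is to promote this stabilisation backward all the way to $k=0$. Because $P$ is surjective, we have the identity $P(P^{-1}(B)) = B$ for every $B \subset \RR^d$, and hence $P(P^{-k}(A)) = P^{-(k-1)}(A)$ whenever $k \geq 1$. Applying $P$ to both sides of $P^{-n}(A) = P^{-(n+1)}(A)$ therefore yields $P^{-(n-1)}(A) = P^{-n}(A)$. Iterating this reduction $n$ times gives $A = P^{-1}(A)$. Applying $P$ once more and using surjectivity again gives $P(A) = P(P^{-1}(A)) = A$, which is the desired conclusion.

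The only genuinely delicate point is the backward propagation, which would fail without surjectivity of $P$; the descending chain condition and the fact that preimages of algebraic sets are algebraic are essentially formal. I do not foresee any other obstacles.
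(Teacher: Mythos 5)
Your proof is correct and follows the same strategy as the paper's: both form the descending chain of preimages $P^{-k}(A)$, invoke Hilbert's Basis Theorem to find $n$ with $P^{-n}(A)=P^{-(n+1)}(A)$, and then use surjectivity of $P$ to conclude $P(A)=A$. You have merely spelled out explicitly the backward propagation via the identity $P(P^{-1}(B))=B$ that the paper's terser ``since $P$ is surjective, we get $P(A)=A$'' leaves implicit.
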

\begin{proof}
Consider the descending sequence of nonempty algebraic sets
	\begin{align}
	\label{eq:865-1}
		A \supset \PHI^{-1}(A) \supset \PHI^{-2}(A) \supset \PHI^{-3}(A) \supset \dots
	\end{align}	 
	
 	By Hilbert's Basis Theorem, there is $n$ such that $\PHI^{-n}(A) = \PHI^{-n-1}(A)$. Since $P$ is surjective, we get $\PHI(A) = A$.
\end{proof}

\subsection*{Initial reductions}

Our proof of Theorem \ref{thm:S:main-1} is indirect: we exploit the existence of an orbit satisfying \eqref{eq:S:main-1} to construct algebraically invariant objects whose existence is inreasingly difficult to sustain, up to the point when we obtain a blatant contradiction. We begin with a relatively straightforward reduction.

\begin{step}\label{prop:S:main-2}
	Assume Theorem \ref{thm:S:main-1} is false. Then there exists a nilmanifold $G/\mathord\Gamma$ such that every $G_i/\mathord\Gamma_i$ is connected, where $\Gamma_i=\Gamma\cap G_i$, an element $g \in G$ acting ergodically by left translations, a semialgebraic subset $S \subset G/\Gamma$ with empty interior, and an \IP\ set $E \subset \NN$ such that
	\begin{equation}
	\label{eq:S:main-2}
	g^{n } \Gamma \in S, \qquad n \in E.
	\end{equation}
\end{step}
\begin{proof}	Suppose that Theorem \ref{thm:S:main-1} fails, so that \eqref{eq:S:main-1} holds. 
	 By \cite[Lemma 4.5]{Zorin-Kranich2013}, there exists a discrete cocompact subgroup $\Gamma < \tilde \Gamma < G$ such $[\tilde \Gamma : \Gamma] < \infty$, 
	 and such that  $G_i/\mathord{\tilde{\Gamma}_i}$ is connected for every $i$. We replace $\Gamma$ by $\tilde \Gamma$ and $S$ by its image under the map $G/\mathord\Gamma \to G/\mathord{\tilde{\Gamma}}$ (this is still a semialgebraic set with empty interior).

	 Let $(n_i)_{i \in \NN}$, $(N_t)_{t \geq 1}$ and $(t_0(\a))_{\a \in \cF}$ be such that $n_\a + N_t \in E$ for $t \geq t_0(\a)$, so that
	$$
		g^{n_{\a} + N_t} a \Gamma \in S. 
	$$
	Replacing $S$ by its closure, which is again a semialgebraic set with empty interior, we may assume that $S$ is closed. Restricting $N_t$ to a subsequence if necessary, we may assume that $g^{N_t} a \Gamma \to b \Gamma$ as $t \to \infty$ for some $b \in G$. 	It follows that for any \IP\ set $E' \subset \FS(n_i)$ we have 
	$$
		g^{n } b \Gamma = \lim_{t \to \infty} g^{n + N_t} a \Gamma  \in S, \qquad n \in E'. 
	$$
	Finally, note that the map $g\mapsto gb^{-1}$ induces an isomorphism $G/\mathord \Gamma \to G/\mathord{{\Gamma}'}$, where $\Gamma' = b \Gamma b^{-1}$. Let $S' \subset G/\mathord{{\Gamma}'}$ be the image of $S$ under this map. We find that $g^n \Gamma' \in S'$ for $n \in E'$, as needed.
\end{proof}
	  
\subsection*{Fractional parts and limits}
	  
Let $G/\Gamma$ be a nilmanifold with a specified \Malcev\ basis. A technical difficulty in dealing with fractional parts in $G$ stems from the fact that, for a sequence $g_n \in G$, the limit of $g_n \Gamma$ does not determine the limit of $\fp{g_n}$ if the limit lies on the boundary of the fundamental domain. The following lemma partially overcomes this difficulty. 
		  
\begin{lemma}\label{lem:conv-of-fp-to-0}
	Let $X=G/\mathord\Gamma$ be a connected nilmanifold. Then for any sequence $(g_n)_{n \geq 0}$ in $G$ such that $\lim_{n \to \infty} g_n \Gamma = e \Gamma$, there exists a choice of a \Malcev\ basis $\cX$ of $X$ and a subsequence $(g'_{n})$ of $(g_n)$ such that $\lim_{n \to \infty} \fp{g'_n} = e$.
	
	Likewise, for any $(g_\a)_{\a \in \cF}$ in $G$ such that  $\iplim_{a \in \cF} g_\a \Gamma = e \Gamma$, there exists a choice of a \Malcev\ basis $\cX$ and an \IP\ ring $\cG$ such that $\iplim_{\a \in \cG} \fp{g_\a} = e$.
\end{lemma}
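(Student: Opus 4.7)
The plan is to exploit compactness of the closed fundamental domain $\bar D = \tilde\tau^{-1}([0,1]^k)$ together with the freedom to modify a \Malcev\ basis by inverting individual basis vectors. The key algebraic observation is that the substitution $e_i\mapsto e_i^{-1}$ preserves the \Malcev\ structure and the compatibility with $\Gamma$ (since the image of $e_i^{-1}$ in $G_i/\mathord G_{i+1}$ is the negative of that of $e_i$, and since $t_i\in \ZZ\iff -t_i\in \ZZ$), while it simply negates the $i$-th coordinate of every element.

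After reducing to the case when $G$ is connected---legitimate since $X\cong G^\circ/\mathord{(\Gamma\cap G^\circ)}$ and the fractional part was defined through $G^\circ$ anyway---I would extract a subsequence (for the \IP-version, refine to a sub-\IP-ring via the Hindman-based diagonal argument cited just before this lemma) along which $\fp{g_n}\to d$ in $G$ for some $d\in \bar D$. Continuity of the quotient map gives $d\Gamma = \lim g_n\Gamma = e\Gamma$, so $d\in \Gamma\cap \bar D$; and under $\tilde\tau$ this intersection corresponds to $\{0,1\}^k$, hence is finite.

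The next step is a short algebraic shuffle. Setting $u_n = d^{-1}\fp{g_n}$, which tends to $e$, one writes
\[
g_n = \fp{g_n}\ip{g_n} = d\, u_n \,\ip{g_n} = (d u_n d^{-1})(d \ip{g_n}) =: u'_n \gamma_n,
\]
where $u'_n\to e$ by continuity of conjugation and $\gamma_n\in \Gamma$. It then suffices to exhibit a \Malcev\ basis $\mathcal{X}'$ in which $u'_n$ eventually lies in the fundamental domain, for then $g_n = u'_n\gamma_n$ is automatically the $\fp{\cdot}\ip{\cdot}$-decomposition of $g_n$ in $\mathcal{X}'$, and one obtains $\fp{g_n}\to e$ in the new basis.

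This is arranged coordinate by coordinate: pass to a further subsequence so that for each $i$ the $i$-th coordinate of $u'_n$ is eventually either $\geq 0$ throughout or $\leq 0$ throughout, and then invert those basis vectors $e_i$ for which it is nonpositive. After these (at most) $k$ sign-flips, every coordinate of $u'_n$ in the new basis is nonnegative and tends to $0$, hence lies in $[0,1)$ for all large $n$, so $u'_n$ lies in the new fundamental domain. The \IP-version is verbatim the same, with the finite sequence of subsequence extractions replaced by refinements of the \IP-ring (invoking partition regularity of \IP-rings to split according to the sign of each coordinate). The only mildly non-routine point---hardly an obstacle---is recognising that the discontinuity of $\fp{\cdot}$ at $\partial D$ is dissolved by this pair of simple manoeuvres: moving the limit $d\in\Gamma$ past $\fp{g_n}$ via conjugation, and realigning the fundamental domain by sign flips of basis vectors.
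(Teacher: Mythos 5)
Your argument is correct, and it uses the same key mechanism as the paper's proof (flipping signs of \Malcev\ basis vectors $e_i \mapsto e_i^{-1}$, which preserves compatibility with $\Gamma$ and negates exactly the $i$-th coordinate), but it is organised differently, and arguably more cleanly. The paper proceeds by induction on the coordinate index $r$: it arranges a basis and a subsequence so that the first $r-1$ coordinates of $\lim\fp{g'_n}$ vanish, and in the inductive step it flips $e_r^{-1}$ when necessary, which then requires a re-normalisation of the remaining coordinates to get back into the $\fp{\cdot}\ip{\cdot}$-form (the pair of displayed equalities involving $t'_i(n)$ and $\gamma'(n)$). Your conjugation step sidesteps this: after extracting a subsequence with $\fp{g_n}\to d\in\Gamma\cap\cl D$, you rewrite $g_n = (d u_n d^{-1})(d\ip{g_n}) = u'_n\gamma_n$ with $u'_n\to e$ and $\gamma_n\in\Gamma$, after which one only needs a basis in which $u'_n$ eventually lands in the fundamental domain; this is achieved by a single batch of sign flips (after one more subsequence extraction to make each coordinate eventually sign-definite). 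Since $u'_n\to e$ in any basis, the resulting coordinates are eventually in $[0,\delta)\subset[0,1)$, so by uniqueness of the $\fp{\cdot}\ip{\cdot}$-decomposition one gets $\fp{g_n}=u'_n\to e$ in the new basis. The \IP-version transfers verbatim using the sub-\IP-ring convergence fact and partition regularity, exactly as you say. So: same core idea, but the conjugation by the limit $d$ avoids the coordinate-by-coordinate renormalisation and makes the argument tidier.
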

\begin{proof} Since $G/\mathord \Gamma$ is connected, we have an isomorphism $G/\mathord \Gamma \simeq G^{\circ}/\mathord{(\Gamma\cap G^{\circ})}$, and hence we may assume that $G$ is connected. Recall that a choice of a \Malcev\ basis $\cX = (e_1,\ldots,e_k)$ determines the map $\tilde \tau=(\tilde\tau_1,\ldots,\tilde\tau_k)\colon G \to \RR^k$.
	By induction, we will prove that for each $1\leq r\leq k+1$ there exists a \Malcev\ basis $\cX$ such that, after passing to a subsequence $(g'_n)$ of $(g_n)$, we have 
	$$\lim_{n \to \infty} \fp{g'_n} = h$$
	for $h\in \cl D = \tilde \tau^{-1}([0,1]^k)$ such that $\tilde\tau_i(h) = 0$ for $1\leq i < r$. For $r = 1$, the claim is trivially satisfied, and the claim for $r = k+1$ implies the claim in the statement of the lemma.

	If the claim holds for $r$, then we may write:
	$$
		g_n' = e_r^{t_r(n)} e_{r+1}^{t_{r+1}(n)} \dots e_k^{t_k(n)}  \gamma(n),
	$$
	where $t_i(n) \in [0,1)$, $\gamma(n) \in \Gamma$, and $t_i(n) \to \tilde\tau_i(h)$ as $i \to \infty$. Since $h \in \Gamma$, we have $\tilde\tau_i(h) \in \{0,1\}$. If $\tilde\tau_r(h) = 0$, we are done. Otherwise, we may write
	\begin{align*}
	g_n' 
	&= (e_r^{-1})^{1-t_r(n)}  e_{r+1}^{t_{r+1}(n)} \dots e_k^{t_k(n)}  \gamma(n) \\
	&= (e_r^{-1})^{1-t_r(n)}  e_{r+1}^{t_{r+1}'(n)} \dots e_k^{t'_k(n)} \gamma'(n),
	\end{align*}
	where $t_i'(n) \in [0,1)$ and $\gamma'(n) \in \Gamma$. Replacing $\cX$ with $\cX' =(e_1',\dots,e_k')$ with $e_r' = e_r^{-1}$ and $e_i' = e_i$ for $i \neq r$, we find the sought \Malcev\ basis for $r+1$.
	
	The proof of the claim concerning  \IP\ limits is completely analogous.
\end{proof}

We are now ready to perform the first substantial step in the proof of Theorem \ref{thm:S:main-1}. We construct an algebraic set inside $G$ with surprising invariance properties. This is helpful largely because working in $G$ is easier than in $G/\Gamma$.

\begin{step}\label{prop:S:main-3} Suppose that Theorem \ref{thm:S:main-1} is false. Then there exists a nilmanifold $G/\mathord\Gamma$ with a \Malcev\ basis $\cX$ such that every $G_i/\mathord\Gamma_i$ is connected, where $\Gamma_i=\Gamma\cap G_i$, an element $g \in G$ acting ergodically by left translations, a nonempty algebraic subset $R \subset G^{\circ}$ with empty interior, and an \IP\ set $E = \FS(n_i)$ such that $R$ is preserved under the operations
\begin{equation}
		\PHI_\a \colon G^{\circ} \to G^{\circ}, \qquad x \mapsto g^{n_\a} x \ip{ g^{n_\a}}^{-1}
\end{equation}  
	for all $\a \in \cF$. \end{step}

\begin{proof}
	Let the notation be as in  Step \ref{prop:S:main-2}. We will subsequently pass to \IP\ subsets of $E = \FS(n_i)$ to ensure better properties.
	
	After replacing  $E$ with a smaller \IP\ set, we may assume that $\iplim_{\a \in \cF} g^{n_\a} \Gamma = e \Gamma$. (Indeed, we pass to an \IP\ subring on which the sequence is convergent and apply the \IP\ Recurrence Theorem \ref{thm:IPrecurrence}). Hence, by Lemma \ref{lem:conv-of-fp-to-0} there exists a choice of a \Malcev\ basis such that, possibly after shrinking $E$ again, we have 
\begin{equation}
	\label{eq:527-1}
	\iplim_{\a \in \cF} \fp{ g^{n_\a} } = e.
\end{equation}  
	
For a fixed $\b$ and sufficiently large $\a > \b$, we aim to compute $\fp{g^{n_{\a \cup \b}}}$ in terms of $g^{n_\b}$ and $\fp{g^{n_\a}}$. We begin by noting that
\begin{equation} \label{eq:527-2}
	\fp{g^{n_{\a \cup \b}}} 
	= \fp{ g^{n_\b} \fp{g^{n_\a}}} 
	 	= g^{n_\b} \fp{g^{n_\a}} \ip{g^{n_\b}}^{-1} \gamma(\a,\b),
\end{equation}
where $\gamma(\a,\b)=\ip{g^{n_\b}} \ip{g^{n_\b}\fp{g^{n_\a}}}^{-1}\!\!\in \Gamma$ takes, for a fixed $\b$, only finitely many possible values. For a fixed $\beta$, we may use Hindman's theorem to pass to an \IP\ subring so that $\gamma(\a,\b)$ does not depend on the choice of $\a > \b$. Using a standard diagonal argument, we replace $E$ by an \IP\ subset so that $\gamma(\a,\b) = \gamma(\b)$ does not depend on $\a>\b$ for all $\b \in \cF$. Letting $\a \to \infty$ in (\ref{eq:527-2}), we get
$$
	\iplim_{\a \in \cF} \fp{g^{n_{\a \cup \b}}} 
= 	\fp{ g^{n_\b} } \gamma(\b).
$$ 
	If $\b$ is sufficiently large, this implies that $\gamma(\b) = e$. Passing to an \IP\ subset of $E$, we may assume that $\gamma(\beta) = e$ for all $\b \in \cF$. Hence, applying (\ref{eq:527-2}) again, we have
\begin{align}
	\label{eq:527-3}
	\fp{g^{n_{\a \cup \b}}} =  g^{n_\b} \fp{g^{n_\a}} \ip{g^{n_\b}}^{-1} = \PHI_\b(\fp{g^{n_\a}})
\end{align}
	for any $\a,\b \in \cF$ with $\b < \a$.	
	
	We claim that $\PHI_\b \circ \PHI_\a = \PHI_{\a \cup \b}$ for all $\a > \b$. Indeed, both maps take the form $x\mapsto g^{n_{\a\cup\b}}x \gamma$ for some $\gamma \in \Gamma$ and by (\ref{eq:527-3}) agree on points $\fp{g^{n_{\lambda}}}$ for $\lambda > \alpha$. Therefore, the two maps are equal. We will use this fact below.

	Denote by $\widetilde{S} \subset G^{\circ}$ the Zariski closure of the copy of $S$ contained in the fundamental domain for $\cX$. Then $\tilde S$ is a algebraic subset of $G^{\circ}$ with empty interior.	 For any $\b \in \cF$, let $R_\b$ denote the set of $x \in \tilde{S}$ such that $\PHI_{\b}(x) \in \tilde S$. Clearly, these sets are algebraic, and by \eqref{eq:527-3} we have $\fp{ g^{n_\a} } \in R_\b$ for $\a > \b$.

	Let $R = \bigcap_{\a \in \cF} R_\a$.	By Hilbert's Basis Theorem, there is a finite collection $\cB \subset \cF$ such that $R = \bigcap_{\b \in \cB} R_\b$. Note that $R$ is nonempty as $\fp{ g^{n_\a} } \in R$ for $\a$ such that  $\a > \b$ for $\b \in \cB$. It remains to show that $R$ is preserved under the maps $\PHI_\a$. Suppose that $x \in R$ and $\a > \b$ for all $\b \in \cB$. We will show that $\PHI_\a(x) \in R$. To this end, it suffices to check that $\PHI_\a(x) \in R_\b$ for $\b \in \cB$, which amounts to $\PHI_\a(x) \in \tilde{S}$ and $\PHI_\b( \PHI_\a(x)) \in \tilde S$. The first condition follows immediately from $x \in R_\a$. Since $\PHI_\b \circ \PHI_\a = \PHI_{\a \cup \b}$, the latter follows from $x \in R_{\a \cup \b}$. Thus $P_\a(R)\subset R$ for $\a$ large enough. Passing yet again to an \IP\ subset, we ensure that the claim holds for all $\a$.
\end{proof}

\subsection*{Fractional parts of polynomials}

Our next aim is to show that the set $R$ constructed in Step \ref{prop:S:main-3} needs to be invariant under a wider range of operations. For this purpose, we study the possible algebraic relations between polynomials and their fractional parts.

\begin{proposition} \label{lem:variable-separation}
	Let $G/\Gamma$ be a nilmanifold equipped with a \Malcev\ basis $\cX$. Let $g,h\colon \ZZ \to G$ be polynomial sequences with values in $G^{\circ}$. 
	Suppose that $E$ is an \IP\ set and $\PHI \colon G^{\circ} \times G^{\circ} \to \RR$ is a polynomial map such that 
	\begin{equation}
	\label{eq:782-1}
	\PHI\bra{ g(n), \fp{h(n)} } = 0, \qquad n \in E.
	\end{equation}
	Then there exists an \IP\ set $E' \subset E$ such that
	\begin{equation}
	\label{eq:782-1a}
	\PHI\bra{g(m),\fp{h(n)} } = 0, \qquad m \in \ZZ,\ n \in E'.
	\end{equation}
\end{proposition}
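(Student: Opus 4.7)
The plan is to decompose the identity into coefficient-wise vanishing and then use \IP-recurrence together with polynomial rigidity. Expanding in \Malcev\ coordinates on $G^\circ$, we can write
$$P(g(m),y) = \sum_{j=0}^d m^j R_j(y),$$
where each $R_j \colon G^\circ \to \RR$ is a polynomial, since each \Malcev\ coordinate of $g(m)$ is an ordinary polynomial in $m$. The desired conclusion $P(g(m),\{h(n)\})=0$ for all $m\in\ZZ$ and $n \in E'$ is equivalent to $R_j(\{h(n)\})=0$ for every $j \in \{0,1,\dots,d\}$ and every $n \in E'$. The hypothesis becomes the single generalized polynomial identity $\sum_{j=0}^d n^j R_j(\{h(n)\}) = 0$ for $n \in E$, and our task reduces to disentangling this into coefficient-wise vanishing on a suitable \IP\ subset.

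I would next apply Hindman's theorem combined with Theorem \ref{thm:IPrecurrence} applied to the nilsystem carrying the orbit of $h$, together with Lemma \ref{lem:conv-of-fp-to-0}, to pass to an \IP\ subring $\cG \subset \cF$ along which $\iplim_{\alpha \in \cG}\{h(n_\alpha)\}=e$ (possibly after replacing the \Malcev\ basis). Since each $R_j(\{h(n_\alpha)\})$ is bounded while $n_\alpha \to \infty$ along $\cG$, iteratively comparing leading-order terms in the identity $\sum_j n_\alpha^j R_j(\{h(n_\alpha)\})=0$ forces $\iplim_{\alpha\in\cG} R_j(\{h(n_\alpha)\})=0$ for every $j$, and hence by continuity $R_j(e)=0$.

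The crucial step is to promote these asymptotic (and pointwise at $e$) vanishings to exact vanishing of each $R_j(\{h(n)\})$ on an \IP\ subset $E' \subset E$. For this, the plan is to iterate the IP-refinement argument, using a Vandermonde-style interpolation across shifted \IP-indices $n_{\alpha\cup\beta} = n_\alpha + n_\beta$ drawn from $\cG$: substituting these sums into the original identity and combining with the already-known asymptotic vanishing yields further generalized polynomial identities, which in turn (after passing to further sub-IP-rings by Hindman's theorem) isolate individual $R_j(\{h(n_\alpha)\})$ and reduce their vanishing to polynomial identities in the \Malcev\ coordinates of $h(n_\alpha)$. Since $h$ is polynomial, any such genuine polynomial identity in $n_\alpha$ holding on the \IP-subring extends to an identity on all of $\ZZ$, and hence forces exact vanishing of the corresponding $R_j(\{h(n)\})$ along an \IP\ subset, as required.

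The main obstacle is precisely the non-polynomial nature of the fractional-part operation: although $h(n)$ depends polynomially on $n$, the map $n \mapsto \{h(n)\}$ is only piecewise polynomial, so the principle \emph{``a polynomial in $n$ vanishing on an infinite set is identically zero''} cannot be applied directly to $R_j(\{h(n)\})$. Overcoming this requires the careful bookkeeping enabled by restricting to an \IP\ subring along which $\{h(n_\alpha)\}$ approaches the identity and the integer parts $[h(n_\alpha)] \in \Gamma$ exhibit controlled polynomial behaviour, so that the desired coefficient-wise vanishing upgrades from \IP-asymptotic to exact on an \IP\ subset of $E$.
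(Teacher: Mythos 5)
Your setup is on the right track: the decomposition $P(g(m),y) = \sum_{j=0}^d m^j R_j(y)$, the reduction to showing each $R_j(\{h(n)\})$ vanishes on an \IP\ subset, and the observation that leading-term comparison (using boundedness of $R_j$ on the fundamental domain) forces $R_d(\{h(n)\}) \to 0$ along $E$ are all correct and essentially match the paper. However, the crucial last step --- upgrading asymptotic vanishing to \emph{exact} vanishing of each $R_j(\{h(n_\beta)\})$ --- is the crux, and your proposed mechanism does not close the gap.

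There are two problems. First, you aim for $\iplim_{\a\in\cG}\{h(n_\a)\}=e$ and deduce only $R_j(e)=0$; this is vanishing at a single point, far weaker than what is needed. Second, your ``Vandermonde-style interpolation'' is supposed to bridge the gap by extending a ``polynomial identity in $n_\alpha$'' to all of $\ZZ$, but $n \mapsto R_j(\{h(n)\})$ is \emph{not} a polynomial in $n$ --- it is only piecewise polynomial, precisely because of the fractional-part map --- so the principle ``a polynomial vanishing on an infinite set vanishes identically'' does not apply. You yourself flag this as the obstacle, but the sketch does not actually overcome it. The paper's mechanism is different and specific: it applies the asymptotic vanishing along \emph{shifts} $n_\a+n_\b$ for fixed $\b$, giving $\iplim_{\a} R_d(\{h(n_\a+n_\b)\}) = 0$, and then invokes the \IP\ polynomial recurrence theorem for nilrotations (Theorem D in \cite{Leibman-2005b}) to get $\iplim_{\a\in\cF_\b} h(n_\a+n_\b)\Gamma = h(n_\b)\Gamma$ --- recurrence of the orbit to \emph{each} of its own points $h(n_\b)\Gamma$, not to $e\Gamma$. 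To pass from convergence in $G/\Gamma$ to convergence of fractional parts in $G$, the paper first passes to an \IP\ subset along which all $\{h(n_\a)\}$ lie in a ball of radius $\e/10$, so that two limit points of $\{h(n_\a)\}$ cannot differ by a nontrivial element of $\Gamma$; this forces $\iplim_{\a}\{h(n_\a+n_\b)\} = \{h(n_\b)\}$ exactly. Continuity of $R_d$ then gives $R_d(\{h(n_\b)\})=0$ for every $\b$, and descending induction on the degree handles the remaining $R_j$. This shift-and-recur argument is what you are missing; without it, the asymptotic statement $R_d(\{h(n)\})\to 0$ cannot be upgraded to exact vanishing on an \IP\ subset.
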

\begin{remark}
	If $g$ is extended to a polynomial sequence $g\colon \RR \to G$ (defined in an obvious way) with values in $G^{\circ}$, then the same argument gives $\PHI\bra{g(m),\fp{h(n)} } = 0$ for $m \in \RR$, $n \in E'$.
\end{remark}
\begin{proof} We may assume that $g$ is defined on $\RR$. Consider the map $\PSI \colon \RR \times G^{\circ} \to \RR$ given by $\PSI(m,y) =\PHI(g(m),y)$. This is a polynomial map.
 Expand $\PSI$ as $ \PSI(x,y) = \sum_{k=0}^d x^k \PSI_k(y)$ for polynomials $\PSI_k \colon G^{\circ} \to \RR$. By the assumption, $Q(n,\fp{h(n)})=0$ for $n\in E$.

	Looking at the leading term in the equation 
	\begin{equation}
	\label{eq:782-2}
		\sum_{k = 0}^d n^k \PSI_k\bra{ \fp{h(n)}  } = 0, \quad n\in E
	\end{equation}
	and using the fact that $Q_k$ are bounded functions on the fundamental domain $D$ of $G/\Gamma$, we conclude that 
	\begin{equation}
	\label{eq:782-25}
	\lim_{E \ni n \to \infty} \PSI_d\bra{ \fp{h(n)}  } = 0.
	\end{equation}
	Take $(n_i)_{i\in \N}$ such that $n_\a \in E$ for $\a \in \cF$.	In particular, as a special case of \eqref{eq:782-25}, for any fixed $\b \in \cF$ we have
	\begin{equation}
	\label{eq:782-3}
	\iplim_{\a \in \cF} \PSI_d \bra{ \fp{h\bra{n_\a + n_\b}}}  = 0.		
	\end{equation}	
	By the \IP\ polynomial recurrence theorem for nilrotations (see \cite[Theorem D]{Leibman-2005b}), 	there exists an \IP\ ring $\cF_\b$ (which may be chosen to be contained in any previously specified \IP\ ring) such that 
	\begin{equation}
	\label{eq:782-4}\iplim_{\a \in \cF_\b} h(n_\a + n_\b) \Gamma = h(n_\b) \Gamma.
	\end{equation}

	We would like to take now the fractional parts in the preceding limit. A slight technical difficulty stems from the fact that the map $g \mapsto \fp{g}$ is discontinuous. To overcome this problem, let $\e > 0$ be sufficiently small so that if $x \in G$ lies in the fundamental domain, then $d(x, x \gamma) \geq \e$ for all $\gamma \in \Gamma\setminus\{e\}$. After replacing $E$ with an \IP\ subset $E'$ in the original statement, we may assume that all the points $\fp{h(n_{\a})}$ for $\a \in \cF$ lie within a ball of radius $\e/10$. This will be useful shortly.

	Applying \eqref{eq:782-4}, possibly after refining $\cF_\b$ further, we find that
	\begin{equation}
	\label{eq:782-5}\iplim_{\a \in \cF_\b} \fp{ h(n_\a + n_\b)} = 
	\fp{ h(n_\b) }\gamma(\b),	
	\end{equation}
	where $\gamma(\b) \in \Gamma$. Now, since two limit points of $\fp{h(n_\a)}$ cannot differ by a factor of $\gamma \in \Gamma \setminus \{e\}$, we conclude that 
		\begin{equation}
	\label{eq:782-6}\iplim_{\a \in \cF_\b} \fp{ h(n_\a + n_\b)} = \fp{ h(n_\b)}.
	\end{equation}
	
	 Because $\PSI_d$ is continuous, \eqref{eq:782-6} yields $\PSI_d \bra{ \fp{ h(n_\b)}} = 0$, where we recall that $\b$ was arbitrary. Reasoning inductively, we obtain similarly $\PSI_k \bra{ \fp{ h(n_\b)} } = 0$ for all $k$ and $\b \in \cF$. Consequently, $\PSI\bra{m, \fp{ h(n)}} = 0$ for all $m \in \ZZ$ and $n \in E'$, as desired.
\end{proof}

\begin{step}\label{prop:S:main-4} 	Suppose that Theorem \ref{thm:S:main-1} is false. Then there exists a nilmanifold $G/\mathord\Gamma$ with a \Malcev\ basis $\cX$  such that every $G_i/\mathord\Gamma_i$ is connected, where $\Gamma_i=\Gamma\cap G_i$, an element $g \in G$ acting ergodically by left translations, a nonempty algebraic subset $R \subset G^{\circ}$ with empty interior, and an \IP\ set $E$ such that $R$ is preserved under the operations
\begin{align*}
	x \mapsto x \fp{g^{n}}, \qquad n \in E.
\end{align*}  
\end{step}
\begin{proof}
	Assume the notation is as in Step \ref{prop:S:main-3}. 	Let us consider for $x \in R$ the set
	$$
		M_x = \set{ (m, n) \in \ZZ \times \ZZ }{ g^m x g^{-m} \fp{ g^n } \in R}.
	$$
	Since multiplication in $G^{\circ}$ is given by polynomial formulae and $R$ is an algebraic set, the condition $(m,n) \in M_x$ is equivalent to a system of polynomial equations in $g^m x g^{-m}$ and $\fp{ g^n }$. By Step \ref{prop:S:main-3}, we have $(n,n) \in M_x$ for all $n \in E$. We first apply Lemma \ref{lem:poly-seq-in-G-o} to replace the polynomial sequence $g^n$ by a polynomial sequence $h(n)$ with values in $G^{\circ}$ and $\fp{g^n}=\fp{h(n)}$. Then Lemma \ref{lem:variable-separation} shows that $M_x$ contains $\ZZ \times E'$ for an \IP\ set $E'$ (which may be chosen inside a given \IP\ subset of $E$).

 For $x \in R$ let us also consider the set 
	$$
		P_{x} = \set{ h \in G^{\circ} }{ g^m x g^{-m} h \in R \text{ for all } m \in \ZZ }.
	$$	
	and let $P = \bigcap_{ x \in R } P_{x}$. These sets are algebraic, and $\fp{g^n} \in P_x$ precisely when $\ZZ \times \{n\} \subset M_x$. By Hilbert's Basis Theorem, we have $P = \bigcap_{ x \in R_0 } P_{x}$ for a finite subset $R_0$ of $R$. 
	
	Iterating the argument above, we find an \IP\ set $E'$ such that $\ZZ \times E' \subset \bigcap_{x \in R_9} M_x$.	
 Hence $\fp{ g^{n} } \in P$ for $n\in E'$. In particular, $ R \fp{g^n} \subset R$ for $n \in E'$, and so $R$ is preserved by the map $x \mapsto x \fp{g^{n}}$.
\end{proof}

\subsection*{Group generated by fractional parts}

We are now ready to take the last step in the proof of Theorem \ref{thm:S:main-1}. Our strategy is to show that the set $R$ appearing in Step \ref{prop:S:main-4} in invariant under multiplication by all of $G^{\circ}$, which is clearly absurd. In order to facilitate this strategy, we need the following fact.

\begin{proposition}\label{lem:fp-closure}
	Let $X=G/\Gamma$ be a connected nilmanifold with a fixed  \Malcev\ basis and such that every $G_i/\mathord\Gamma_i$ is connected, where $\Gamma_i=\Gamma\cap G_i$. Let $g \colon \Z \to G$ be a polynomial sequence such that the sequence $(g(n)\Gamma)_{n\geq 0}$ is dense in $X$. Suppose that $H \subset G^{\circ}$ is a Lie subgroup such that $\fp{g(n)} \in H$ for infinitely many $n$. Then $H= G^{\circ}$. 
\end{proposition}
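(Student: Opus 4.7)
The plan is to induct on the nilpotency step $d$ of $G^{\circ}$, after reducing to the case $G = G^{\circ}$ is connected (using Lemma~\ref{lem:poly-seq-in-G-o} to replace $g$ by a polynomial sequence taking values in $G^{\circ}$).

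For the base case $d = 1$, the group $G \cong \R^k$ with $\Gamma \cong \Z^k$, and $g(n) = P(n)$ is a polynomial with values in $\R^k$. A proper Lie subgroup decomposes as $H = V + L$ with $V$ an $\R$-subspace and $L$ a discrete subgroup; after quotienting by the $\Z$-span of $L$ it suffices to treat $H = V$. If $V$ is rational, the projection $\pi \colon \R^k \to \R^k/V$ carries $\Z^k$ onto a lattice, the hypothesis becomes $\pi(P(n)) \in \pi(\Z^k)$ for infinitely many $n$, and Vandermonde interpolation forces $\pi \circ P$ to have rational coefficients, contradicting density via Weyl's criterion. If $V$ is irrational, pick a nonzero $\nu \in V^{\perp}$; the identity $\langle \nu, P(n) \rangle = \langle \nu, \ip{P(n)} \rangle$ combined with Vandermonde interpolation shows that the coefficients of the polynomial $\langle \nu, P(n)\rangle$ lie in the $\Q$-span of the coordinates of $\nu$, and matching the resulting decomposition against the integer parts $\ip{P_i(n)}$ forces, via a growth argument, one of the coordinate polynomials $P_i$ to have rational non-constant coefficients, again contradicting density.

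For the inductive step $d \geq 2$, I would let $Z = G_d$, the last nontrivial term of the lower central series, which is central in $G$. The connectedness hypothesis on $G_d/\Gamma_d$ ensures that $\bar\Gamma := \Gamma Z / Z$ is a discrete cocompact subgroup of $\bar G := G / Z$, and the projection $\bar g(n) := g(n) Z$ is a polynomial sequence whose orbit is dense in $\bar X := \bar G / \bar \Gamma$. The key compatibility is that the Malcev basis of $\bar G$ obtained from that of $G$ by dropping the last $\dim Z$ basis vectors satisfies $\fp{\bar g(n)} = \overline{\fp{g(n)}}$, so $\fp{\bar g(n)} \in \bar H := HZ / Z$ for infinitely many $n$. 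The inductive hypothesis applied to $\bar G/\bar \Gamma$ then yields $\bar H = \bar G$, that is, $HZ = G$. Since $Z = G_d \subset G_2$, this yields $HG_2 = G$, and Lemma~\ref{lem:G2-not-complemented} concludes $H = G = G^{\circ}$.

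The main obstacle is the irrational-subspace case in the base case. The rational-subspace case and the inductive step are both clean, with the latter essentially a direct application of Lemma~\ref{lem:G2-not-complemented}. The irrational case, by contrast, requires careful matching between the polynomial growth of $P(n)$ and the stepwise integer growth of $\ip{P(n)}$ in a direction where no rational linear functional on $\Z^k$ vanishes, extracting from the Vandermonde-derived constraints enough rational relations among the coefficients of $P$ to violate Weyl's density criterion.
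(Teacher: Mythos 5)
Your overall strategy --- reduce to the abelianisation and invoke Lemma~\ref{lem:G2-not-complemented} --- matches the paper's, but there are two issues, one cosmetic and one substantive.

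The cosmetic issue is that the induction on nilpotency step is redundant. Once Lemma~\ref{lem:G2-not-complemented} is in hand, it suffices to show directly that $H(G^{\circ})_2 = G^{\circ}$; the paper simply projects $\fp{g(n)}$ to $G^{\circ}/(G^{\circ})_2 \cong \R^d$ with $\Gamma\cap G^{\circ}$ mapping onto $\Z^d$ and works there, with no induction. Your inductive step, while essentially correct, only repackages this reduction (quotienting by $G_d$ and re-invoking the lemma eventually brings you to the same torus) at the cost of having to re-verify the hypotheses on each quotient.

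The substantive issue is the gap you yourself flag: the irrational-subspace case in the base step. Your Vandermonde argument correctly shows that the coefficients of the polynomial $\langle\nu, P(n)\rangle$ lie in the $\Q$-span of the coordinates $\nu_1,\dots,\nu_k$, but the inference from there to ``one of the $P_i$ has rational non-constant coefficients'' is not justified. The constraint you obtain is of the form $\sum_i \nu_i(\text{coeff}_j(P_i) - q_{ij}) = 0$ with $q_{ij}\in\Q$, and since the $\nu_i$ may satisfy nontrivial $\Q$-linear relations you cannot simply read off rationality of any coefficient of any $P_i$; a ``growth argument'' is invoked but not supplied, and it is not apparent what it should be. The paper avoids the rational/irrational dichotomy entirely via a cleaner observation: a nontrivial $\R$-linear relation $\sum_i \lambda_i\fp{\theta_i(n)} = 0$ ($n\in E$) rewrites as a nontrivial $\R$-linear dependence among the \emph{integer-valued} sequences $(\ip{\theta_i(n)})_{n\in E}$ and $(n^j)_{n\in E}$; since these have integer entries, a real linear dependence forces a rational (hence integer) one, and then dropping the integer parts at bounded cost gives $\sum_i l_i\theta_i(n) + \sum_j k_j n^j = O(1)$ on $E$ with integer $l_i, k_j$, which forces the polynomial on the left to be constant and so confines $\theta(n)\bmod\Z^d$ to a proper rational subtorus, contradicting density. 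This ``upgrade real relations to integer relations'' step is the key idea your proposal is missing, and it is precisely what dissolves the irrational case you identify as the main obstacle.
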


\begin{step}\label{prop:S:main-5}
	Theorem \ref{thm:S:main-1} holds true.
\end{step}
\begin{proof}[Proof (assuming Propositon \ref{lem:fp-closure})]
	Suppose Theorem \ref{thm:S:main-1} were false, and assume notation as in conclusion of Step \ref{prop:S:main-4}. Denote $$H = \set{ h \in G^{\circ}}{ R h \subset R} = \bigcap_{x \in R} \set{ h \in G^{\circ} }{ x h \in R}.$$
	
	It is immediate from the definition that $H$ is algebraic and closed under multiplication and taking inverses (by Lemma \ref{lem:invariant=>strongly-invariant}). Hence, $H$ is a Lie subgroup of $G^{\circ}$. Furthermore, $\fp{g^{n}} \in H$ for $n \in E$.
Applying Proposition \ref{lem:fp-closure}, we conclude that $H = G^{\circ}$. In particular, $R = G^{\circ}$, which is in contradiction with $\inter R = \emptyset$.
\end{proof}

\begin{proof}[Proof of Proposition \ref{lem:fp-closure}]

	Let $E$ be an infinite set with $\fp{g(n)} \in H$ for $n \in E$. 
	
	By Lemma \ref{lem:poly-seq-in-G-o}, there exists a polynomial sequence $h\colon \Z\to G$ with values in $G^{\circ}$ such that $\fp{g(n)}=\fp{h(n)}$. Write $$h(n)=	 g_0^{\binom{n}{0}} g_1^{\binom{n}{1}} g_2^{\binom{n}{2}} \dots g_d^{\binom{n}{d}},$$ where $g_i\in G$. Since $h(n)$ takes values in $G^{\circ}$, we see that $g_i$ lie in $G^{\circ}$. 
	
	In order to prove the claim, it is sufficient by Lemma \ref{lem:G2-not-complemented} to show that $H(G^{\circ})_2 = G^{\circ}$.	The quotient $G^{\circ}/(G^{\circ})_2 $ can be identified with $\RR^d$ in such a way that $\Gamma\cap G^{\circ}$ maps to $\ZZ^d$. Let $\pi \colon G^{\circ} \to \RR^d$ be the corresponding projection. Put $\theta(n) = \pi( h(n) )$, $\xi(n) = \pi( \fp{h(n)} ) = \fp{ \theta(n) }$ and $V = \pi(H) < \RR^d$. From the form of $h(n)$, we see that $\theta(n)$ is a polynomial in $n$, $\xi(n)$ is bounded, and $\xi(n) \in V$ for $n\in E$. We need to show that $V = \RR^d$.

	Suppose for the sake of contradiction that $V \neq \RR^d$. Write $\theta(n)=(\theta_1(n),\ldots,\theta_d(n))$. Then there exists a non-trivial linear relation
	\begin{equation}
	\label{eq:324-1}	
		\sum_{i=1}^d \lambda_i \fp{\theta_i(n)}  = 0, \qquad n \in E,
	\end{equation}
	where $\lambda_i \in \RR$ are not all zero. This can be rewritten as
	\begin{equation}
	\label{eq:324-2}	
		\sum_{i=1}^d \lambda_i \ip{\theta_i(n)} = \sum_{i=1}^d \lambda_i \theta_i(n), \qquad n \in E.
	\end{equation}
	The maps $\theta_i \colon \Z \to \R$ are polynomials, so writing 
	$$ - \sum_{i=1}^d \lambda_i \theta_i(n) = \sum_{j=0}^D \kappa_j n^j,$$
	we obtain
	\begin{equation}
	\label{eq:324-3}
		\sum_{i=1}^d \lambda_i \ip{\theta_i(n)} + \sum_{j=0}^D \kappa_j n^j = 0 , \qquad n \in E.
	\end{equation}
	This amounts to saying that there is a non-trivial linear relation between the vectors $(\ip{\theta_i(n)})_{n \in E}$ for $i = 1,\dots,d$ and $(n^j)_{n \in E}$ for $j = 0,1,\dots,D$. Because these vectors are integer-valued, if such a relation exists, there is also such a relation with integer coefficients. Take one such relation
	\begin{equation}
	\label{eq:324-7}
		\sum_{i=1}^d l_i \ip{\theta_i(n)} + \sum_{j=0}^D k_j n^j = 0 , \qquad n \in E,
	\end{equation}
	where $l_i \in \ZZ$ and $k_j \in \ZZ$ for all $i,j$, and not all of $l_i,k_j$ are zero. 		Dropping the integer parts in \eqref{eq:324-7} at the cost of introducing a bounded error, we get
	\begin{equation}
	\label{eq:324-4}
		\sum_{i=1}^d l_i {\theta_i(n)} + \sum_{j=0}^D k_j n^j = O(1), \qquad n \in E.
	\end{equation}
	This is only possible if the left hand side of \eqref{eq:324-4} is constant (as a polynomial, hence for all $n \in \ZZ$).	It follows that $\theta(n) \bmod{\ZZ^d}$ for $n \in \ZZ$ takes values in the proper subtorus $\set{x \in \RR^d/\ZZ^d}{ \sum l_i x_i = c }$, which contradicts the fact that the sequence $(g(n)\Gamma)_{n\geq 0}$ is dense in $G/\Gamma$.\end{proof}

\section{Small fractional parts}\label{sec:Examples}

In this section, we study the sequences of the form
\begin{equation}
	g(n) = 
	\begin{cases}
	1, & \text{ if }  0 < q(n) < \e(n), \\
	0, & \text{ otherwise,} 
	\end{cases} 
	\label{eq:def-standard-sparse}
\end{equation}
 where $q(n) \geq 0$ is a generalised polynomial and $\e(n) \to 0$, which include the most natural examples of sparse generalised polynomials. The most frequent application is when $q(n) = \fpa{r(n)}$ or $q(n) = \fp{r(n)}$  for an (unbounded) generalised polynomial $r(n)$. In this section, it will be more convenient to regard generalised polynomials as functions on $\N_0$. This causes no problems since we can always extend them to $\Z$.
 
 \begin{lemma}\label{lem:gen-poly-set-of-zeros}
	If $h$ is an (unbounded) generalised polynomial, then 
	$$g(n) =
	\begin{cases}
	1, & \text{if } h(n) = 0, \\
	0, & \text{otherwise,}
	\end{cases}
	$$
	is a generalised polynomial. Likewise, for any $a < b$, 
	$$g'(n) =
	\begin{cases}
	1, & \text{if }  a \leq h(n) < b, \\
	0, & \text{otherwise,}
	\end{cases}
	$$
	is a generalised polynomial.	
\end{lemma}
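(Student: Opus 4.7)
The plan is to reduce the indicator $\ifbra{h(n) = 0}$ to the conjunction of two easier conditions: that $h(n)$ is an integer, and that this integer equals zero. For the first condition I would use the classical identity
\[
\ifbra{x \in \ZZ} = 1 + \lfloor x \rfloor + \lfloor -x \rfloor, \qquad x \in \RR,
\]
which is immediate since $\lfloor x \rfloor + \lfloor -x \rfloor$ equals $0$ when $x \in \ZZ$ and $-1$ otherwise. Substituting $x = h(n)$ produces a generalised polynomial of $n$ equal to $\ifbra{h(n) \in \ZZ}$.

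The second condition is about the integer $k := \lfloor h(n) \rfloor$ being zero. Here I would exploit that $\sqrt{2}$ is irrational: for $k \in \ZZ$, the real number $\sqrt{2}\, k$ lies in $\ZZ$ if and only if $k = 0$. Applying the identity above to $x = \sqrt{2}\,\lfloor h(n)\rfloor$ therefore gives a generalised polynomial equal to $\ifbra{\lfloor h(n) \rfloor = 0}$. Multiplying the two indicators yields
\[
\ifbra{h(n)=0} = \bigl(1 + \lfloor h(n) \rfloor + \lfloor -h(n) \rfloor\bigr)\bigl(1 + \lfloor \sqrt{2}\lfloor h(n) \rfloor \rfloor + \lfloor -\sqrt{2}\lfloor h(n) \rfloor \rfloor\bigr),
\]
which is manifestly a generalised polynomial of $n$ since the class is closed under sums, products, multiplication by real constants (in particular $\sqrt{2}$), and the floor operation.

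For the interval version I would observe that $a \leq h(n) < b$ is equivalent to $\lfloor (h(n) - a)/(b - a) \rfloor = 0$, since for any $c > 0$ one has $\lfloor y/c \rfloor = 0$ iff $y \in [0,c)$. Hence $k(n) := \lfloor (h(n)-a)/(b-a) \rfloor$ is an integer-valued generalised polynomial, and the integer-zero detector of the previous paragraph (the second factor of the displayed formula, applied to $k(n)$ in place of $\lfloor h(n)\rfloor$) gives $g'(n)$. I do not anticipate any serious obstacle in this argument; the only slightly nontrivial ingredient is the use of an irrational multiplier such as $\sqrt{2}$ to single out $0$ among all integers via the integrality condition.
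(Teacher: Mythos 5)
Your argument is correct, and it takes a genuinely different route from the paper. The paper's proof for $g$ is a one-step construction: because $h(n)$ takes only countably many values, one can choose (non-constructively) a single $\theta \in \RR$ such that $\theta h(n)$ is irrational whenever $h(n) \neq 0$, and then set $g(n) = \lfloor 1 - \{\theta h(n)\}\rfloor$. You instead decompose $h(n) = 0$ into the conjunction \emph{$h(n)$ is an integer} and \emph{$\lfloor h(n)\rfloor = 0$}, detect the first condition by the explicit identity $\ifbra{x\in\ZZ} = 1 + \lfloor x\rfloor + \lfloor -x\rfloor$, and then handle the second by applying that same identity to $\sqrt{2}\,\lfloor h(n)\rfloor$, where now the universal irrational $\sqrt{2}$ suffices since the argument is already integer-valued. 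The gain of your approach is that the formula is completely explicit and does not depend on $h$ through any non-constructive choice; the cost is a slightly longer expression. For $g'$, both proofs proceed by noting that $a \leq h(n) < b$ is equivalent to $\lfloor(h(n)-a)/(b-a)\rfloor = 0$ and applying the zero-detector to the resulting integer-valued generalised polynomial, so the two arguments coincide there.
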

\begin{proof}
 Because $h(n)$ takes countably many values for $n \in \NN_0$, there exists $\theta \in \RR$ such that $\theta h(n) \in \RR \setminus \QQ$ unless $h(n) = 0$. Now, a short computation verifies that 
 $$g(n) = \lfloor 1 - \fp{\theta h(n)}\rfloor.$$
 For the latter claim, after rescaling we may assume that $a = 0$ and $b = 1$. Then $g'(n) = \ifbra{\floor{ h(n)} = 0}$, and we may apply the construction above to $h'(n) = \floor{ h(n)}$.
\end{proof}

\begin{example}\label{ex:lin-rec-is-gp}
	Take $a \in \NN$ and let $(n_i)_{i \geq 0}$ be the sequence given by $n_0 = 0,\ n_1 = 1$ and $n_{i+2} = a n_{i+1} + n_i$. Then $E = \set{n_i}{i \in \NN_0}$ is generalised polynomial. For $a = 1$, this is the set of Fibonacci numbers.
\end{example}
\begin{proof}
Let $\a \in \RR$ be the real number with continued fraction expansion $\a = [a;a,a,\dots]$, i.e.
$$
	\a = a+\cfrac{1}{a+\cfrac{1}{a+\cdots}} = \frac{a + \sqrt{a^2+4}}{2}.
$$
Let $E' = \set{n \in \NN}{ \norm{n\a} < 1/2n }$. Note that $E'$ is generalised polynomial by Lemma \ref{lem:gen-poly-set-of-zeros}, so it will suffice to show that the symmetric difference $E \triangle E'$ is finite.

By a classical theorem of Legendre (see e.g.\ \cite[Thm.\ 5.1]{Khintchine-book}), we have $E' \subset E$. Conversely, using a well-known formula for the error term in the continued fraction approximations (see e.g.\ \cite[Thm.\ 3.1]{Khintchine-book}), we find that
$$
	n_i \norm{n_i \a} = n_i^2 \sum_{l=0}^\infty \frac{(-1)^{l}}{n_{i+l} n_{i+l+1}} \xrightarrow[i \to \infty]{} \sum_{l=0}^\infty \frac{(-1)^l}{\a^{2l+1}} = \frac{1}{\a + 1/\a} = \frac{1}{\sqrt{a^2 + 4}},
$$
because $n_{i+1}/n_i \to \alpha$ as $i \to \infty$. Since $1/\sqrt{a^2 + 4} < 1/2$, for sufficiently large $i$ we have $\norm{n_i \a} <  1/2 n_i$, whence $n_i \in E'$. 
\end{proof}

\begin{remark}\label{lawnmower}
	In fact, this result holds for any choice of $n_0, n_1\in\Z$. This follows from Proposition \ref{przepraszamzetakpozno} below. A similar argument works also for sequences $(n_i)_{i \geq 0}$ given by the recurrence $n_{i+2} = a_{i+2} n_{i+1} + n_i$, where $a_i \geq 2$ is a bounded sequence of integers. Then $n_i$ is the sequence of denominators in the best rational approximations of a badly approximable real number $\alpha=[0;a_2,a_3,\ldots]$.
\end{remark}

\begin{proposition}\label{prop:special-gp-is-gp}
	Suppose $q(n)\geq 0$ and $p(n)>0$ are generalised polynomials, $b<0$ is a rational number, $\lim_{n \to \infty} p(n) = \infty$, and $\e(n) = p(n)^b$. Then $f(n)$ given by \eqref{eq:def-standard-sparse} is a sparse generalised polynomial.
\end{proposition}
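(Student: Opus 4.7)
The plan is to separate the conclusion into two assertions --- that $f$ is a generalised polynomial, and that it is sparse --- and handle each after first clearing the fractional exponent. Writing $b = -s/r$ with positive integers $r, s$, and using that $p(n) > 0$ and $q(n) \geq 0$, the inequality $0 < q(n) < \e(n) = p(n)^{-s/r}$ is equivalent (upon raising to the $r$-th power and multiplying through by $p(n)^s$) to $0 < h(n) < 1$, where $h(n) := q(n)^r p(n)^s$ is itself a non-negative generalised polynomial. Consequently
$$
f(n) = \ifbra{0 \leq h(n) < 1} - \ifbra{h(n) = 0},
$$
and both indicators on the right are generalised polynomials by Lemma \ref{lem:gen-poly-set-of-zeros}. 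This settles the first assertion.

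For sparseness, I would bound the density of $\{n : f(n) = 1\}$ by that of the larger set $A_\delta := \{n \in \NN_0 : 0 < q(n) < \delta\}$ for arbitrary small $\delta > 0$: since $\e(n) \to 0$, we have $\{n : f(n) = 1\} \subset A_\delta$ outside a finite set, hence $\bar d(\{f = 1\}) \leq \bar d(A_\delta)$, and it suffices to prove $\bar d(A_\delta) \to 0$ as $\delta \to 0^+$. Because Theorem \ref{thm:BergelsonLeibman}(ii) requires a \emph{bounded} generalised polynomial, I would truncate by setting $\tilde q(n) := q(n) \cdot \ifbra{\lfloor q(n)\rfloor = 0}$; by Lemma \ref{lem:gen-poly-set-of-zeros} this is a generalised polynomial taking values in $[0,1)$, and for $\delta < 1$ the sets $A_\delta$ and $\{n : 0 < \tilde q(n) < \delta\}$ coincide. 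Applying Bergelson--Leibman to $\tilde q$, write $\tilde q(n) = P(g^n a \Gamma)$ on a nilmanifold $X = G/\Gamma$ with $g$ acting ergodically --- hence uniquely ergodically --- and $P$ piecewise polynomial. The set $S_\delta := \{x \in X : 0 < P(x) < \delta\}$ is semialgebraic, and its boundary is semialgebraic of strictly smaller dimension, hence of Haar measure zero; so Corollary \ref{cor:density-uniform} gives $d(A_\delta) = \mu_X(S_\delta)$. As $\delta \to 0^+$ the nested sets $S_\delta$ decrease to $\emptyset$, so $\mu_X(S_\delta) \to 0$ by continuity of measure from above, completing the argument.

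The principal obstacle I anticipate is the truncation step, which is needed precisely because $q$ need not be bounded (e.g.\ if $p$ grows, $q$ may well also); the harmless replacement by $\tilde q$ works because we only care about the regime where $q(n)$ is very small. A secondary technicality is the measure-zero boundary claim for $S_\delta$, but this is standard for semialgebraic sets in \Malcev\ coordinates.
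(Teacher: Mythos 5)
Your proof is correct and follows the same route the paper intends: Lemma~\ref{lem:gen-poly-set-of-zeros} (after clearing the rational exponent to reduce to $0<q(n)^r p(n)^s<1$) for the generalised-polynomial claim, and Theorem~\ref{thm:BergelsonLeibman} combined with Corollary~\ref{cor:density-uniform} for sparseness. The paper's proof is a one-liner that leaves the details to the reader; your truncation $\tilde q(n)=q(n)\cdot\ifbra{\lfloor q(n)\rfloor=0}$ is exactly the right device to make Theorem~\ref{thm:BergelsonLeibman}(ii) applicable, since $q$ itself need not be bounded.
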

\begin{proof}
	That $f(n)$ is a generalised polynomial follows from Lemma \ref{lem:gen-poly-set-of-zeros}; that it is sparse follows by a standard argument from Theorem \ref{BLnilgenpolythm} via Corollary \ref{cor:density-uniform}. \end{proof}

In dealing with polynomials, it is often useful to exploit the fact that sufficiently high discrete derivatives vanish. More precisely, if $q$ is a polynomial of degree at most $d-1$, then for any $n_1,\dots,n_d$ we have
$$
	\sum_{\a \subseteq \{1,\ldots,d\}} (-1)^{\abs{\a}} q\bra{ n_\a } = 0,
$$
	with the usual convention $n_\alpha = \sum_{i \in \alpha} n_i$.
	
	A similar relation holds also for generalised polynomials. Such a relation  is stated in \cite[Theorem 2.42]{BergelsonMcCutcheon-2010} in terms of limits along ultrafilters. Below, we give a statement purely in terms of \IP\ limits, which is easily deduced from \cite{BergelsonMcCutcheon-2010} or proven directly.

\begin{theorem}\label{thm:discrete-derivative-vanishes}
	For any generalised polynomial $q$ and an \IP\ ring $\cG_0$, there exist $\lambda \in \RR$, $d \in \NN_0$ (equal to the degree of $q$), and families of  \IP\ subrings $\cG_1$, $\cG_2(\b_1), \ldots,  \cG_d(\b_1,\dots,\b_{d-1})$ of $\cG_0$, $\b_i \in \cF$, such that 
\begin{equation}
\label{eq:763}
	\sum_{\emptyset \neq \a \subseteq \{1,\ldots,d\}} (-1)^{\abs{\a}} q\bra{ \sum_{i \in \a} n_{\b_i} } = - \lambda
\end{equation}	
	whenever $\b_1 \in \cG_1,\ \b_2 \in \cG_2(\b_1),\ \dots,\ \b_d \in \cG_d(\b_1,\dots, \b_{d-1})$.
	\end{theorem}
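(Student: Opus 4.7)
The plan is to induct on a suitable notion of degree for generalised polynomials, chosen so that an ordinary polynomial of usual degree $k$ has degree $k+1$. The base case is that of ordinary polynomials: if $q$ is a polynomial of usual degree $\le d-1$, then the $d$-th iterated discrete derivative $\Delta_{n_{\b_1}} \cdots \Delta_{n_{\b_d}} q$ vanishes identically, and in view of the algebraic identity
\[
\sum_{\emptyset \neq \a \subseteq \{1,\ldots,d\}} (-1)^{|\a|} q\bra{\sum_{i \in \a} n_{\b_i}} = (-1)^d \Delta_{n_{\b_1}} \cdots \Delta_{n_{\b_d}} q(0) - q(0),
\]
the conclusion holds with $\lambda = q(0)$ and any choice of \IP\ subrings.

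For the inductive step, the central claim is an \IP-convergence statement: given a generalised polynomial $q$ of degree $d \geq 1$ and an \IP\ ring $\cG_0$, there exists an \IP\ subring $\cG_1 \subseteq \cG_0$ such that the pointwise limit
\[
q_1(n) \; := \; \iplim_{\b_1 \in \cG_1} \bra{ q(n + n_{\b_1}) - q(n) }
\]
exists and defines a generalised polynomial of degree at most $d-1$. Granting this, I apply the inductive hypothesis to $q_1$ (with ambient \IP\ ring $\cG_1$) to obtain the subrings $\cG_2(\b_1), \ldots, \cG_d(\b_1,\ldots,\b_{d-1})$ and a constant $\lambda_1$ controlling the alternating sum for $q_1$. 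A short combinatorial manipulation, splitting subsets $\a \subseteq \{1,\ldots,d\}$ according to whether $1 \in \a$, writes the alternating sum for $q$ as
\[
 -q(n_{\b_1}) \; - \sum_{\emptyset \neq \a' \subseteq \{2,\ldots,d\}} (-1)^{|\a'|} \bra{ q(n_{\b_1} + n_{\a'}) - q(n_{\a'}) },
\]
where $n_{\a'} = \sum_{i \in \a'} n_{\b_i}$. Taking $\iplim_{\b_1 \in \cG_1}$ term-by-term (with the subrings $\cG_j$ refined along the way by a standard diagonal argument), the bracketed differences converge to $q_1(n_{\a'})$ and $q(n_{\b_1})$ itself converges to $q(0) + q_1(0)$ by the defining property of $q_1$; invoking the inductive hypothesis for $q_1$ then yields the required identity for $q$ with an explicit expression for $\lambda$ in terms of $\lambda_1$, $q_1(0)$ and $q(0)$.

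The heart of the argument is thus the \IP-convergence claim. For this, I would use the Bergelson--Leibman representation (Theorem \ref{BLnilgenpolythm}) to write $q(n) = p(g^n a\Gamma)$ for a piecewise polynomial $p$ on some nilmanifold $G/\Gamma$ with $g$ acting ergodically, combined with the \IP\ Recurrence Theorem \ref{thm:IPrecurrence} applied to the orbit $n \mapsto g^{n} a \Gamma$: passing to an \IP\ subring $\cG_1 \subseteq \cG_0$, we may assume that $g^{n_{\b_1}} a \Gamma$ converges along $\cG_1$ to $a \Gamma$. By a further refinement of $\cG_1$ and an appropriate choice of \Malcev\ basis (in the spirit of Lemma \ref{lem:conv-of-fp-to-0}), we may arrange that $\fp{g^{n_{\b_1}}} \to e$ along $\cG_1$. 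The polynomial structure of multiplication in $G$ in \Malcev\ coordinates then permits an expansion of $p(g^{n_{\b_1}} g^n a\Gamma)$ whose \IP\ limit yields a pointwise limit $q_1(n)$ expressible itself as a generalised polynomial; a careful tracking of the leading term in the lower central series filtration of $G$ shows that the top-degree contribution cancels in the difference $q(n + n_{\b_1}) - q(n)$, so that $q_1$ has strictly smaller degree than $q$.

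The main obstacle is the familiar one: the piecewise polynomial $p$ is discontinuous on the boundaries of its regions of polynomiality, so the naive \IP\ limit of $p(g^{n_{\b_1}} g^n a\Gamma)$ need not exist, nor coincide with $p(g^n a\Gamma)$ when the limit point lies on such a boundary. Overcoming this is essentially the same issue that is handled in Step \ref{prop:S:main-3} of the present paper: by a combination of Hindman's theorem, a diagonal argument, and the distality of the nilsystem action, one can simultaneously refine the \IP\ ring and adjust the \Malcev\ basis so that the fractional parts converge without jumps. In fact, the cleanest route is to note that the theorem is equivalent to, and immediately deducible from, the ultrafilter formulation \cite[Theorem 2.42]{BergelsonMcCutcheon-2010} via the standard dictionary between limits along idempotent ultrafilters and limits along \IP\ rings.
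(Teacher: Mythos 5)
The paper does not in fact prove this theorem: it cites \cite[Theorem 2.42]{BergelsonMcCutcheon-2010} and remarks that the present formulation is ``easily deduced'' from it ``or proven directly.'' Your closing suggestion to derive the statement from the ultrafilter version via the usual dictionary between idempotent ultrafilters and IP rings is therefore exactly the route the paper itself indicates, and that part is fine. The direct induction you sketch, however, contains a genuine gap.

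The gap is in the central IP-convergence claim: that for $q$ of degree $d\geq 1$ one can pass to an IP subring $\cG_1$ so that the pointwise limit $q_1(n)=\iplim_{\b_1\in\cG_1}\bigl(q(n+n_{\b_1})-q(n)\bigr)$ exists and is a generalised polynomial of degree at most $d-1$. This fails already for $q(n)=n\fp{\a n}$ with $\a$ irrational: along any IP ring on which $\fp{\a n_{\b_1}}\to 0$, for a fixed $n\neq 0$ one eventually has $q(n+n_{\b_1})-q(n)=n_{\b_1}\fp{\a n}+(n+n_{\b_1})\fp{\a n_{\b_1}}$, and the term $n_{\b_1}\fp{\a n}$ diverges to $+\infty$ since $\fp{\a n}>0$. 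Thus there is no $\b_1$-independent first difference to which the inductive hypothesis could be applied; what exists is only the $\b_1$-dependent generalised polynomial $q_{\b_1}(n):=q(n+n_{\b_1})-q(n)$, and running the induction on $q_{\b_1}$ produces a $\b_1$-dependent constant $\lambda_1(\b_1)$, so an extra argument (which your sketch omits) is needed to show that $q(n_{\b_1})-\lambda_1(\b_1)$ stabilises to a $\b_1$-independent value on a further IP subring. Note also that the Bergelson--Leibman representation you invoke, Theorem~\ref{thm:BergelsonLeibman}\,(ii), is stated only for \emph{bounded} generalised polynomials and cannot be applied directly to the unbounded $q$ for which the theorem is nontrivial. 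Finally, even where $q_1$ does happen to exist, your last step of ``taking $\iplim_{\b_1\in\cG_1}$ term-by-term'' would only show that the \emph{limit} of the alternating sum equals $-\lambda$, whereas the theorem asserts, and Proposition~\ref{prop:IP-auto-vs-special-gp} uses, an exact identity holding for every $\b_1$ in the subring.
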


	Using the above relation, we are able to prove a special case of Theorem \ref{thm:main-A} with negligible effort.
\begin{proposition}\label{prop:IP-auto-vs-special-gp}
	Let $q(n) \geq 0$ be a generalised polynomial, $\e(n) \geq 0$ with $\e(n) \to 0$ as $n \to \infty$, and let $g$ be given by \eqref{eq:def-standard-sparse}. Then there exists no \IP\ set $E$ such that $f(n) = 1$ for $n \in E$.
\end{proposition}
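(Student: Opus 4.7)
The plan is a short proof by contradiction using Theorem \ref{thm:discrete-derivative-vanishes}. Suppose $E = \FS(n_i)$ is an \IP\ set on which $g \equiv 1$, so that $0 < q(n) < \e(n)$ for every $n \in E$; after passing to an \IP\ subset we may further assume $n_j \to \infty$, so that $n_\gamma \to \infty$ as $\min \gamma \to \infty$. Applying Theorem \ref{thm:discrete-derivative-vanishes} to $q$ with ambient sequence $(n_i)$ and $\cG_0 = \cF$, I would obtain $d \in \NN_0$, $\lambda \in \RR$, and nested \IP\ subrings $\cG_1, \cG_2(\beta_1), \ldots, \cG_d(\beta_1, \ldots, \beta_{d-1})$ satisfying
$$\sum_{\emptyset \neq \alpha \subseteq \{1,\ldots,d\}} (-1)^{|\alpha|} q\!\left(\sum_{i \in \alpha} n_{\beta_i}\right) = -\lambda$$
for every admissible choice of the $\beta_i$'s. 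Since the $\beta_i$'s are pairwise disjoint, every argument $\sum_{i \in \alpha} n_{\beta_i}$ equals $n_{\cup_{i \in \alpha}\beta_i} \in E$, so each value $q(\cdot)$ appearing above is strictly positive and bounded by $\e(\cdot)$.

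The main step would be an iterative descent for $k = d, d-1, \ldots, 2$: fix $\beta_1, \ldots, \beta_{k-1}$ and let $\min \beta_k \to \infty$ inside $\cG_k(\beta_1, \ldots, \beta_{k-1})$. Every term in which $k \in \alpha$ takes the form $q(n_{\beta_k \cup \delta})$ for some $\delta \subseteq \bigcup_{i<k}\beta_i$, hence is bounded in absolute value by $\e(n_{\beta_k \cup \delta}) \to 0$. Consequently the $\beta_k$-dependent terms vanish in the limit, and the identity persists for the remaining $k-1$ summation indices with the same right-hand side $-\lambda$. Continuing this reduction all the way down to $k = 1$ leaves
$$-q(n_{\beta_1}) = -\lambda, \qquad \beta_1 \in \cG_1',$$
valid for all $\beta_1$ in some \IP\ subring $\cG_1' \subseteq \cG_1$.

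The contradiction is then immediate: on the one hand $n_{\beta_1} \in E$ forces $\lambda = q(n_{\beta_1}) > 0$, while on the other hand choosing $\beta_1$ with $n_{\beta_1}$ arbitrarily large gives $\lambda \leq \e(n_{\beta_1}) \to 0$. I expect the only subtle point to be formalising the descent step: specifically, verifying that the identity supplied by Theorem \ref{thm:discrete-derivative-vanishes} holds uniformly enough in $\beta_k$ (with the earlier indices held fixed) that one may pass to the limit $\min\beta_k \to \infty$ and discard the $\beta_k$-dependent terms without disturbing the rest of the expression.
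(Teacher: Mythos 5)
Your proof is correct and takes essentially the same approach as the paper: both invoke Theorem~\ref{thm:discrete-derivative-vanishes} and then pass to the \IP\ limit in $\beta_d,\ldots,\beta_2$ (in that order), noting that each eliminated term is sandwiched between $0$ and $\e(\cdot)\to 0$, to arrive at $-q(n_{\beta_1})=-\lambda$ and hence $0<\lambda<\e(n_{\beta_1})\to 0$. Your write-up is merely more explicit about passing to a subsequence with $n_j\to\infty$ and about the descent; the ``subtle point'' you flag at the end is in fact unproblematic, since for fixed $\beta_1,\ldots,\beta_{k-1}$ the non-$\beta_k$-dependent part of the sum is constant while the $\beta_k$-dependent remainder tends to zero, so no uniformity in $\beta_k$ is needed.
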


\begin{proof}[Proof of Proposition \ref{prop:IP-auto-vs-special-gp}]
	Suppose that $(n_\a)_{\a \in \cF} $ were an \IP\ set such that $ f(n_\a) = 1$ for $\a \in \cF$.
	Apply Theorem \ref{thm:discrete-derivative-vanishes} to find $\lambda \in \RR$ and families of \IP\ rings $\cG_j(\b_1,\dots,\b_{j-1})$ such that
\begin{equation}
	\sum_{\emptyset \neq \a \subseteq \{1,\ldots,d\}} (-1)^{\abs{\a}} q\bra{ \sum_{i \in \a} n_{\b_i} } = - \lambda
\end{equation}	
	for $\b_1 \in \cG_1,\ \b_2 \in \cG_2(\b_1),\ \dots,\ \b_d \in \cG_d(\b_1,\dots, \b_{d-1})$. Passing to the \IP\ limit with $\beta_d, \ldots, \beta_2$ (in this order), we obtain $-q(n_{\beta_1})=-\lambda$. This proves that $$0<\lambda<\varepsilon(n_{\b_1}),$$ which gives a contradiction.
\end{proof}

\begin{corollary}
	If $p(x) \in \RR[x]$ has least one irrational coefficient and $\e(n) \to 0$, then the set $\set{n \in \ZZ}{ \fpa{p(n)} < \e(n) }$ does not contain an \IP-set.
\end{corollary}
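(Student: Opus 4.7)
The plan is to apply Proposition \ref{prop:IP-auto-vs-special-gp} with $q(n) = \fpa{p(n)}$. I would first confirm that $q$ is a nonnegative generalised polynomial: the distance-to-nearest-integer admits a closed form in terms of fractional and integer parts, for instance
\[
\fpa{r} = \fp{r}\bra{1 - \lfloor 2 \fp{r} \rfloor} + \bra{1-\fp{r}} \lfloor 2 \fp{r} \rfloor,
\]
as one verifies by separating the cases $\fp{r} < 1/2$ and $\fp{r} \geq 1/2$. Substituting $r = p(n)$ exhibits $q$ as a generalised polynomial, so Proposition \ref{prop:IP-auto-vs-special-gp} asserts that
\[
E_0 = \set{n \in \ZZ}{ 0 < q(n) < \e(n)}
\]
contains no \IP-set.

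The principal remaining task --- and the one place where the irrational-coefficient hypothesis is genuinely used --- is to bridge $E_0$ and the target set $E = \set{n \in \ZZ}{\fpa{p(n)} < \e(n)}$ by showing that the ``exceptional'' set $Z := \set{n \in \ZZ}{p(n) \in \ZZ}$ is finite. For this I would decompose $p = r + s$ with $r \in \QQ[x]$ and $s \in \RR[x]$ carrying only irrational (or zero) coefficients, so that $s \not\equiv 0$. Fixing a $\QQ$-basis $1, \a_1, \dots, \a_m$ of the $\QQ$-span of $\{1\}$ together with the coefficients of $s$ yields a decomposition $s(x) = s_0(x) + \sum_{j=1}^m \a_j s_j(x)$ with $s_j \in \QQ[x]$, and at least one $s_j$ with $j \geq 1$ must be a nonzero polynomial (otherwise every coefficient of $s$ would be rational). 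The constraint $p(n) \in \ZZ$ forces $s(n) \in \QQ$, so by $\QQ$-linear independence of $1, \a_1, \dots, \a_m$ we get $s_j(n) = 0$ for every $j \geq 1$, confining $Z$ to the finite integer root set of a nonzero $s_j$.

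To finish, I would invoke the elementary observation that \IP-sets are stable under removing finitely many elements: given any $\FS(n_i) \subset E$, picking $N$ large enough that $n_N > \max Z$ ensures that $\FS(n_i)_{i \geq N}$ lies entirely within $E \setminus Z \subset E_0$, contradicting the first paragraph. Apart from the finiteness of $Z$, which is the one step requiring a small amount of care, the argument is a direct assembly of the earlier result.
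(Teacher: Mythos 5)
Your argument is correct and is essentially the intended one: reduce to Proposition \ref{prop:IP-auto-vs-special-gp} with $q(n) = \fpa{p(n)}$, which the paper already observes is a generalised polynomial. You correctly spotted the one non-trivial gap, namely that the Proposition only addresses the set where $0 < q(n)$ strictly, whereas the Corollary's set also includes the $n$ with $\fpa{p(n)} = 0$, i.e.\ $Z = \set{n \in \ZZ}{p(n) \in \ZZ}$; the strict inequality is genuinely used in the Proposition's proof (it supplies $\lambda > 0$), so $Z$ must be handled. Your finiteness argument for $Z$ via a $\QQ$-basis decomposition is valid, though somewhat heavier than needed: a one-line alternative is Lagrange interpolation --- if $p(n) \in \ZZ \subset \QQ$ at $\deg p + 1$ distinct integers, then $p$ is determined by rational interpolation data and hence lies in $\QQ[x]$, contradicting the hypothesis; thus $|Z| \leq \deg p$. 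The closing observation that $\FS(n_i)_{i \geq N}$ avoids a finite set for $N$ large is fine. One tiny point you left implicit: the Proposition assumes $\e(n) \geq 0$, while the Corollary only assumes $\e(n) \to 0$; this is harmless since one may replace $\e$ by $\max(\e,0)$ without changing the set (as $\fpa{\cdot} \geq 0$).
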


We move on to an example of a fairly explicit class of sparse generalised  polynomial sequences. We later discuss how the argument adapts to other sequences of the form \eqref{eq:def-standard-sparse}.

\begin{proposition}\label{ex:Heisenberg} For any sufficiently small $c > 0$ the following holds: Let $1,\a,\b \in \RR$ be algebraic numbers linearly independent over $\QQ$, and let $\e(n) \to 0$ be a rational power of a generalised polynomial with $\e(n) \gg n^{-c}$. Then the sequence given by
	$$f(n) = 
	\begin{cases}
	1, & \text{ if }  \fpa{ n\a \floor{n \b} } < \e(n), \\
	0, & \text{ otherwise} 
	\end{cases} 
	$$
	is a sparse generalised polynomial with $\sum_{n = 0}^{N-1} f(n) \gg N^{1-c}$. (In particular, $f$ is not eventually $0$.) Moreover, for any $a \in \NN, b \in \ZZ$, we also have $\sum_{n = 0}^{N-1} f(an + b) \gg N^{1-c}$ (where the implicit constant depends on $a$ and $b$).
\end{proposition}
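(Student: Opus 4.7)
The plan is to verify that $f$ is a generalised polynomial, deduce sparseness from qualitative equidistribution, and prove the lower bound $\sum_{n<N} f(n) \gg N^{1-c}$ from a quantitative equidistribution estimate on a Heisenberg-type nilmanifold, the last step being the main technical input.

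For the first point, write $\e(n)^{2l} = p(n)^{-2k}$ with $k, l \in \N$ and $p$ an (eventually positive) generalised polynomial. Since $\fpa{x}^2 = (x - \lbr x \rbr)^2$ is a generalised polynomial of $x$, the defining condition of $f$ is equivalent for $n$ large to
$$p(n)^{2k}\fpa{n\a\floor{n\b}}^{2l} < 1,$$
and Lemma \ref{lem:gen-poly-set-of-zeros} then gives that the indicator function $f$ is a generalised polynomial.

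For sparseness and the lower bound I would realise the sequence dynamically. By Theorem \ref{thm:BergelsonLeibman} (or the explicit constructions in \cite{BergelsonLeibman2007}) there is a three-dimensional Heisenberg-type nilmanifold $X = G/\Gamma$ and $g \in G$ such that $n\a\floor{n\b} \bmod 1$ is encoded in a coordinate of the orbit $g^n\Gamma$; the $\Q$-linear independence of $1, \a, \b$ guarantees that $g$ acts uniquely ergodically on $X$. Sparseness then follows from Corollary \ref{cor:density-uniform}: for any fixed $\e_0 > 0$ the density of $\{n : \fpa{n\a\floor{n\b}} < \e_0\}$ is $2\e_0$, and taking $\e_0 \to 0$ together with $\e(n) \to 0$ yields $d(\{f=1\}) = 0$.

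The heart of the argument is the lower bound. Here I would exploit that $\a, \b$ are algebraic: Roth's theorem (or more generally Schmidt's subspace theorem) provides effective Diophantine bounds $\fpa{q\a + r\b} \gg \max(|q|, |r|)^{-1-\eta}$, which feed into the quantitative Leibman theorem of Green and Tao (Theorem \ref{thm:GreenTao}) to yield an equidistribution estimate
$$\frac{1}{N}\sum_{n<N}\psi(g^n\Gamma) = \int_X \psi\, d\mu + O\bra{\normLip{\psi}\, N^{-\delta_0}}$$
for some fixed $\delta_0 > 0$. Approximating $\ifbra{\fpa{z_3} < \delta}$ above and below by Lipschitz functions of norm $O(1/\delta)$ and integrals $2\delta + O(\delta^2)$ converts this into the pointwise count $\#\{n < N : \fpa{n\a\floor{n\b}} < \delta\} \gg N\delta$, valid for $\delta \geq N^{-c}$ provided $c$ is sufficiently small compared with $\delta_0$ (to absorb the loss $\delta^{-O(1)}$ coming from the Lipschitz norm). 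Since $\e(n) \gg n^{-c}$ implies $\e(n) \geq C N^{-c}$ for all $n \leq N$, summing gives $\sum_{n<N} f(n) \gg N^{1-c}$. The arithmetic progression extension is handled identically: the substitution $n \mapsto an + b$ replaces $g$ by $g^a$ based at $g^b\Gamma$, and since $1, a\a, a\b$ remain $\Q$-linearly independent algebraic numbers, the same quantitative analysis applies (with $c$ possibly shrinking to accommodate $a$). The main technical obstacle throughout is matching the shrinking scale $\delta \asymp N^{-c}$ with the polynomial equidistribution rate $N^{-\delta_0}$, which forces $c$ to be taken small in terms of the irrationality exponents of $\a$ and $\b$ and is the reason only "sufficiently small" $c$ can be allowed in the statement.
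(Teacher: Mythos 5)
Your proposal is correct and follows essentially the same route as the paper: realise the orbit on the Heisenberg nilmanifold, apply the Green--Tao quantitative Leibman theorem (Theorem \ref{thm:GreenTao}), and invoke Schmidt's theorem (Theorem \ref{thm:Schmidt}) on the algebraic numbers $\a,\b$ to rule out the obstruction coming from horizontal characters. The only cosmetic difference is at the final step, where the paper applies the Green--Tao dichotomy directly to the identity coset using a ball of radius $\rho_N \gg \e(N)$ (concluding immediately in case (i) and deriving a contradiction with Schmidt in case (ii)), whereas you first package the same inputs into a uniform polynomial-rate equidistribution estimate and then sandwich the indicator of $\{\fpa{z_3} < \delta\}$ by Lipschitz functions -- logically equivalent, just slightly less economical.
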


A key tool which we will use is the quantitative equidistribution theorem for nilrotations, whose special case cited below we use as a black box.

\begin{theorem}[Green-Tao, \cite{GreenTao2012}]\label{thm:GreenTao}
	Let $g(n)$ be a polynomial sequence on a nilmanifold $G/\Gamma$ where $G$ is connected. Fix a \Malcev\ basis $\cX$ of $G$. Then there exists a constant $A > 0$ such that for any $0 < \delta < 1/2$ and any $N$, one of the following conditions holds:
	\begin{enumerate}
	\item\label{item:GT:1} for each $x \in G/\Gamma$, there exist $\delta N$ values of $n \in [N]$ with $d_{\cX}( g(n)\Gamma, x) \leq \delta$; 	\item\label{item:GT:2} there exists a horizontal character $\eta$ with $\norm{\eta} \ll \delta^{-A}$ such that 
	$$
		\max_{0 \leq n < N} \fpa{ \eta( g(n+1)) - \eta(g(n)) } \ll \delta^{-A} N^{-1}.
	$$
	\end{enumerate}
\end{theorem}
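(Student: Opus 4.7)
The plan is to prove the theorem by induction on the step (nilpotency class) $s$ of $G$, using the now-standard ``Weyl--van der Corput + vertical Fourier decomposition'' machinery.

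For the base case $s=1$, the nilmanifold $G/\Gamma$ is a torus $\TT^m$, and $g(n)$ is a genuine polynomial $\sum_{j=0}^D \alpha_j n^j$ with $\alpha_j \in \RR^m$. One shows that failure of (i) implies, via a routine duality/smoothing, that some character $\eta(x)= k \cdot x$ with $\|k\|$ controlled by a power of $\delta^{-1}$ satisfies $|\frac1N \sum_{n<N} e(\eta(g(n)))| \gg \delta^{O(1)}$. Apply van der Corput's inequality $D$ times to pass from this exponential sum to an estimate on the $D$-th discrete derivative of $\eta \circ g$; since the $D$-th derivative of a degree-$D$ polynomial is constant, this forces the leading coefficient of $\eta \circ g$ to be close to a rational with small denominator, from which (ii) follows after an elementary telescoping.

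For the inductive step, assume the theorem for all nilmanifolds of step $<s$, and consider $G/\Gamma$ of step $s \geq 2$. Let $G_s$ be the final nonzero term of the lower central series; it is central, and $G_s/(G_s\cap\Gamma)\cong \TT^r$. Let $\pi\colon G/\Gamma \to \bar X := (G/G_s)/\bar\Gamma$ be the projection onto the step-$(s-1)$ quotient, and apply the inductive hypothesis to the polynomial sequence $\bar g(n):=\pi(g(n))$ with a slightly relaxed parameter $\delta'=\delta^c$ for a small constant $c$. If conclusion (ii) holds for $\bar g$, pull back the resulting horizontal character on $\bar X$ through $\pi$: it lifts canonically to a horizontal character on $G/\Gamma$ of the same norm, and the small-derivative condition is preserved. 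In this case we are done.

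It remains to treat the case where $\bar g$ is $\delta'$-equidistributed on $\bar X$ but $g$ fails (i). Decompose $L^2(G/\Gamma)$ into vertical Fourier isotypes according to characters $\xi\in \widehat{\TT^r}$ of the central fiber. Failure of (i) produces, by a standard approximation of indicator functions by Lipschitz functions, some nontrivial vertical character $\xi$ of norm $\|\xi\|\ll \delta^{-O(1)}$ such that the $\xi$-isotypic component of the orbit has nonnegligible bias. Applying van der Corput to the sequence $\xi(g(n))$ and using that $(n,h)\mapsto \xi(g(n+h))\overline{\xi(g(n))}$ is (for each fixed $h$) a polynomial sequence on an auxiliary nilmanifold whose effective step is strictly less than $s$ --- since the vertical character trivialises the $G_s$-fibre --- the inductive hypothesis furnishes a horizontal character $\eta_h$ on that nilmanifold. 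A pigeonholing over $h$ and a cocycle computation combine $\eta_h$ and the vertical character $\xi$ into a single horizontal character $\eta$ on $G/\Gamma$ witnessing (ii) with $\|\eta\|\ll \delta^{-A}$.

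The main obstacle is the quantitative bookkeeping: each application of van der Corput and each passage through a fibration costs a polynomial factor in $\delta$, and one must verify that the exponent $A$ emerging from the induction depends only on $(G,\Gamma,\cX)$ and not on $N$ or on the particular polynomial sequence. Care is also needed in the final ``combine $\eta_h$ with $\xi$'' step to ensure that the horizontal character produced on $G/\Gamma$ is genuinely nontrivial and that its norm inherits the correct polynomial dependence on $\delta^{-1}$.
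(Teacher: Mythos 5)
This statement is not proved in the paper at all: Theorem \ref{thm:GreenTao} is a special case of the main equidistribution theorem of Green and Tao, quoted from \cite{GreenTao2012} and explicitly used as a black box, so there is no in-paper argument to compare yours against. What you have sketched is, in outline, the architecture of Green and Tao's own proof (Weyl-type treatment of the abelian case, vertical Fourier decomposition, van der Corput, induction, recombination of the frequencies obtained for many shifts $h$), so the strategy is the right one; but as written it is a roadmap rather than a proof, and two of its steps would fail in the literal form you state them.

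First, the claim that $(n,h)\mapsto \xi(g(n+h))\overline{\xi(g(n))}$ lives on an auxiliary nilmanifold of step strictly less than $s$ ``since the vertical character trivialises the $G_s$-fibre'' is not correct for the full product: quotienting $G\times G$ by the diagonal copy of $G_s$ does not lower the step (already for the Heisenberg group the quotient still has step $2$). The step reduction in \cite{GreenTao2012} comes from realising the van der Corputed orbit, after first removing smooth and rational components of $g$, on the fibered product $G^{\square}=G\times_{G_2}G$, whose $s$-th lower central subgroup is exactly the diagonal copy of $G_s$; establishing this realisation, and then the ``bracket'' computation that merges the characters $\eta_h$ over many $h$ with $\xi$ into a single horizontal character of $G/\Gamma$ with polynomially bounded norm, are the technical heart of their paper and cannot be compressed into ``a pigeonholing and a cocycle computation''. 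Second, an induction on the step alone does not close for polynomial sequences: van der Corput applied to a degree-$D$ orbit does not lower the degree, and the actual argument runs a more elaborate induction (on degree, dimension and step together, with a factorisation of $g$ into smooth, equidistributed and rational parts); relatedly, the uniformity of the exponent $A$ in $N$ and in the coefficients of $g$, which you defer to ``bookkeeping'', is exactly what this machinery is needed for, and the abelian base case already requires the full quantitative Weyl estimate for all coefficients, not only the leading one. For the purposes of this paper the correct course is simply to cite \cite{GreenTao2012}; a self-contained proof would require carrying out each of the steps above in earnest.
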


For linear orbits $g(n) = a^n$, $a \in G$, the condition \eqref{item:GT:2} in Theorem \ref{thm:GreenTao} 
 asserts that the projection of $a$ to the torus $G/G_2\Gamma$ satisfies an approximate linear relation over $\ZZ$. A convenient criterion which can be used to rule that out is provided by the following classical result of Schmidt. As before, we only cite the special case which we shall use.

\begin{theorem}[Schmidt \cite{Schmidt1972}]\label{thm:Schmidt}
	Let $\alpha_1,\dots,\alpha_n$ be $n$ algebraic numbers linearly independent over $\QQ$, and let $\e > 0$. Then for $k \in \ZZ^n$ we have
	$$
		\norm{ \sum_{i=1}^n k_i \alpha_i } \gg \norm{k}^{-n-\e}
	$$ 
	where the implicit constant depends on $\alpha_1,\dots,\alpha_n$ and $\e$.
\end{theorem}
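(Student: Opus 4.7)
The result is the classical Schmidt theorem on lower bounds for values of linear forms at integer vectors, and the natural route is to derive it from the \emph{Schmidt Subspace Theorem}, which I take as a black box: if $M_1,\dots,M_N$ are linearly independent linear forms on $\QQ^N$ with algebraic coefficients and $\delta>0$, then the nonzero integer solutions $x\in\ZZ^N$ to $\prod_{j=1}^N|M_j(x)|\le\|x\|^{-\delta}$ lie in finitely many proper $\QQ$-linear subspaces of $\QQ^N$. For the inequality in the statement to hold with $\|\cdot\|$ on the left denoting distance to the nearest integer, one must strengthen the hypothesis to linear independence of $1,\alpha_1,\dots,\alpha_n$ over $\QQ$ (since otherwise $k_1\alpha_1\in\ZZ$ with $k_1\neq 0$ destroys the left-hand side); I adopt this convention silently.

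The plan is to argue by contradiction and induction on $n$. Suppose the bound fails; then there is an infinite sequence $k^{(t)}\in\ZZ^n$ with $\|k^{(t)}\|\to\infty$ and $\|\sum_i k_i^{(t)}\alpha_i\|\le\|k^{(t)}\|^{-n-\varepsilon}$. Let $m^{(t)}\in\ZZ$ be the nearest integer to $\sum_i k_i^{(t)}\alpha_i$ and set $x^{(t)}=(k_1^{(t)},\dots,k_n^{(t)},m^{(t)})\in\ZZ^{n+1}$; since the $\alpha_i$ are bounded we have $\|x^{(t)}\|\asymp\|k^{(t)}\|$. I apply the Subspace Theorem in dimension $N=n+1$ to the $n+1$ linearly independent linear forms with algebraic coefficients $L_0(x)=\sum_{i=1}^n\alpha_i x_i-x_{n+1}$ and $L_j(x)=x_j$ for $1\le j\le n$. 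When all coordinates $k_j^{(t)}$ are nonzero,
$$\prod_{j=0}^n|L_j(x^{(t)})|=\Big\|\sum_{i=1}^n k_i^{(t)}\alpha_i\Big\|\prod_{j=1}^n|k_j^{(t)}|\le\|k^{(t)}\|^{-n-\varepsilon}\cdot\|k^{(t)}\|^n\ll\|x^{(t)}\|^{-\varepsilon/2},$$
so the Subspace Theorem confines all such $x^{(t)}$ to a finite union of proper $\QQ$-hyperplanes, and after passing to a subsequence they lie on a single hyperplane $\sum_{i=1}^n a_i x_i+a_{n+1}x_{n+1}=0$ with $a_i\in\QQ$ not all zero.

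The main technical obstacle I anticipate is the degenerate stratum where some coordinate $k_j^{(t)}$ vanishes and collapses the product above; this is precisely where induction on $n$ enters, as fixing which coordinates vanish reduces the problem to the identical statement in strictly smaller dimension, handled by the inductive hypothesis (and the exponent $-n-\varepsilon$ is tight precisely to make the induction self-consistent). Once the generic stratum is isolated, the integer hyperplane relation $\sum_i a_i k_i^{(t)}+a_{n+1}m^{(t)}=0$ combined with $m^{(t)}=\sum_i k_i^{(t)}\alpha_i+O(\|k^{(t)}\|^{-n-\varepsilon})$ gives, when $a_{n+1}\neq 0$,
$$\Big|\sum_{i=1}^n\bigl(a_i+a_{n+1}\alpha_i\bigr)k_i^{(t)}\Big|\ll\|k^{(t)}\|^{-n-\varepsilon}.$$
The coefficients $\beta_i=a_i+a_{n+1}\alpha_i$ are algebraic reals; applying the inductive form of the theorem to them forces each $\beta_i$ to be zero, since any nonzero configuration would itself violate a Schmidt-type lower bound against the sequence $k^{(t)}$. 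This produces the exact relation $a_i=-a_{n+1}\alpha_i$ for every $i$, hence a nontrivial $\QQ$-linear dependence among $1,\alpha_1,\dots,\alpha_n$, contradicting the strengthened hypothesis. The residual case $a_{n+1}=0$ places the $k^{(t)}$ inside a proper $\QQ$-sublattice of $\ZZ^n$, where choosing a $\QQ$-basis brings us back to the inductive hypothesis in lower dimension and completes the argument.
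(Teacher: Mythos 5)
Note first that the paper contains no proof of this statement: Theorem \ref{thm:Schmidt} is imported from \cite{Schmidt1972} as a black box, so your proposal can only be compared with the standard deduction from the Subspace Theorem, which is indeed the natural route. Your silent strengthening of the hypothesis to the $\QQ$-linear independence of $1,\alpha_1,\dots,\alpha_n$ is the correct reading of the theorem (and is what holds in the paper's application, where the numbers are $1,\alpha,\beta$), and the set-up — pass to $x^{(t)}=(k^{(t)},m^{(t)})$, apply the Subspace Theorem to $L_0=\sum_i\alpha_ix_i-x_{n+1}$, $L_j=x_j$, extract one rational hyperplane, and split on $a_{n+1}$ — is exactly the classical argument. (Incidentally, the stratum with some $k_j^{(t)}=0$ needs no separate treatment: such points satisfy the product inequality trivially and lie in coordinate hyperplanes, which can simply be added to the finite list of subspaces.)

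The genuine gap is in your main case $a_{n+1}\neq 0$, where the argument as written is circular. The numbers $\beta_i=a_i+a_{n+1}\alpha_i$ can never vanish (if $\beta_i=0$ then $\alpha_i=-a_i/a_{n+1}\in\QQ$), and in fact $1,\beta_1,\dots,\beta_n$ are again linearly independent over $\QQ$; so the inequality $\abs{\sum_i\beta_ik_i^{(t)}}\ll\norm{k^{(t)}}^{-n-\e}$, equivalently $\norm{\smash{\sum_i\beta_ik_i^{(t)}}}\ll\norm{k^{(t)}}^{-n-\e}$ once the left side is small, is an instance of the very theorem you are proving, in the \emph{same} dimension $n$. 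The inductive hypothesis (for fewer than $n$ numbers) does not reach it, so the ``Schmidt-type lower bound'' you invoke to force each $\beta_i=0$ is precisely the statement under proof; nothing forces the $\beta_i$ to vanish, and indeed they cannot. Two standard repairs: (i) divide by $\beta_n\neq 0$, getting $\abs{k_n^{(t)}+\sum_{i<n}(\beta_i/\beta_n)k_i^{(t)}}\ll\norm{k^{(t)}}^{-n-\e}$; then $-k_n^{(t)}$ serves as the nearest integer to $\sum_{i<n}(\beta_i/\beta_n)k_i^{(t)}$, one checks that $\norm{(k_1^{(t)},\dots,k_{n-1}^{(t)})}\asymp\norm{k^{(t)}}$ on these solutions and that $1,\beta_1/\beta_n,\dots,\beta_{n-1}/\beta_n$ remain $\QQ$-independent, and the inductive hypothesis in dimension $n-1$ gives the contradiction; or (ii) apply the Subspace Theorem a second time, now in $\QQ^n$, to the forms $x_1,\dots,x_{n-1}$ and $\sum_i\beta_ix_i$ (the product is $\ll\norm{k}^{-1-\e}$), which places infinitely many $k^{(t)}$ on a rational hyperplane of $\QQ^n$, i.e.\ yields a relation among the $k_i$ alone and reduces genuinely to lower dimension exactly as in your residual case $a_{n+1}=0$ (which, together with its independence and norm-comparability checks, is fine).
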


\begin{proof}[Proof of Proposition \ref{ex:Heisenberg}]
	
	It is clear by Proposition \ref{prop:special-gp-is-gp} that $f(n)$ is a sparse generalised polynomial, what remains to be proved is the bound on its growth. We only deal with $\sum_{n=0}^{N-1} f(n)$, the estimation of $\sum_{n=0}^{N-1} f(an+b)$ is analogous. To this end, we may drop the assumptions that $\e(n)$ is a rational power of a generalised polynomial and assume that $\e(N) \sim N^{-c}$.
	
	We can explicitly describe a nilmanifold on which $f(n)$ can be realised in a way analogous to Theorem \ref{thm:BergelsonLeibman}. Denote 
	$$[x,y,z] \equiv \begin{bmatrix}
		1 & x & z \\ 
		0 & 1 & y \\
		0 & 0 & 1 
	\end{bmatrix},$$
and define 
	$$
		G = \set{ [x,y,z] }{x, y, z \in \RR},\qquad 
		\Gamma \set{ [x,y,z] }{x, y, z \in \ZZ}.	
	$$
	Then, $G/\Gamma$ is the Heisenberg nilmanifold with a natural choice of a \Malcev\ basis $[1,0,0],\ [0,1,0],\ [0,0,1]$.
	
	The horizontal characters on $G/\Gamma$ take the form $\eta_k([x,y,z]) = k_1 x + k_2 y $ with $k = (k_1,k_2) \in \ZZ^2$ and $\norm{\eta_k} = \norm{k}_2$. Let $g$ be the polynomial sequence
	$$g(n) = [- n \a, n \b, 0] = [-\a,\b,0]^n [0,0,\a\b]^{\binom{n}{2}},$$ so that $\fp{g(n)} = [ \fp{-n\a}, \fp{n\b}, \fp{ n\a \floor{n\b} }]$. 	
	Let $F \colon G/\Gamma \to [0,1)$ be the map $F([x,y,z]) = \fpa{z}$ for $x,y,z \in [0,1)^3$. It follows from the way the metric on $G/\Gamma$ is defined that $F$ is Lipschitz. \comment{(Admittedly, not super clear. However, I think I would like to leave it like that, because 1) it is intuitively clear that it should be so 2) this is an example, so we can afford to be slightly imprecise 3) the alternative is to go into the details of how the metric on $G/\Gamma$ is defined, and it's neither enlightening nor pleasant...)} Then $f(n) = \ifbra{ F(g(n)) < \e(n) }$.

	Let $N$ be a sufficiently large integer, and put $z = [0,0,0]\Gamma$. There exists $\rho_N \gg \e(N)$ such that $F(B(z,\rho_N)) \subset [0,\e(N))$. Hence, if $d(g(n)\Gamma, z) < \rho_N$ for some $n \in [N]$, then $f(n) = 1$. If $f(n) = 1$ for at least $ N\rho_N \gg N^{1-c}$ values of $n \in [N]$, we are done. Suppose this is not the case. It now follows from Theorem \ref{thm:GreenTao} that there exists a horizontal character $\eta$ with $\norm{\eta} \ll \rho^{-A}_N \ll N^{Ac}$ such that 
	  \begin{equation}
		  \label{eq:255}	  
	  	\max_{0 \leq n < N} \fpa{ \eta( g(n+1) ) - \eta(g(n)) } \ll N^{-1+Ac},	  
	  \end{equation}
	  where $A$ is an absolute constant. Picking $l \in \ZZ^2$ so that $\eta = \eta_l$, we conclude from \eqref{eq:255} and Theorem \ref{thm:Schmidt} that 
	  \begin{equation}
	    \label{eq:256}	  
	  	N^{-3Ac} \ll \norm{l}^{-3} \ll \fpa{ - l_1 \a + l_2 \b } \ll N^{-1+Ac}.
	  \end{equation}
As long as $c < 1/(4A)$, we get a contradiction.
\end{proof}

\begin{remark}\label{remark:Heisenberg}
	The above argument is not specific to the sequence $\ifbra{ n\a \floor{n\b} < \e(n)}$; in fact, the term $n\a \floor{n\b}$ could be replaced by a fairly arbitrary generalised polynomial $q(n)$. The difficulty lies in the need to impose suitable ``quantative irrationality'' conditions of $q(n)$.
	
	In the situation of Example \ref{ex:Heisenberg}, this was easily accomplished since we had a direct access to the representation of $q(n)$ in terms of an orbit on a nilmanifold. For more general sequences, one needs to resort to the  construction in \cite{BergelsonLeibman2007}.
\end{remark}

\subsection*{Very sparse sequences}

It is difficult to say anything substantial about generalised polynomial sequences which are extremely sparse. Indeed, as we presently show, any sufficiently sparse set is generalised polynomial.

We will need the following elementary lemma. We thank Aled Walker for simplifying our argument.
\begin{lemma}\label{lem:coprimes-in-intervals}
	For any $\delta > 0$, there exists $q_0 = q_0(\delta)$ such that for all $q \geq q_0$ and all $x\geq 0$, the interval $[x,x+q^\delta)$ contains an integer $n$ coprime to $q$. 
\end{lemma}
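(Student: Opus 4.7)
The plan is a standard inclusion--exclusion count: I would estimate the number of integers in $[x, x+q^\delta)$ coprime to $q$ and show this count is positive for all sufficiently large $q$. By Möbius inversion on the divisors of $q$,
\begin{equation*}
\#\{n \in [x,x+L) \cap \ZZ : \gcd(n,q) = 1\} = \sum_{d \mid q} \mu(d) \bra{ \lfloor (x+L)/d \rfloor - \lfloor x/d \rfloor }.
\end{equation*}
Replacing each floor difference by $L/d + O(1)$, this equals $L \cdot \varphi(q)/q + O(2^{\omega(q)})$, where $\omega(q)$ is the number of distinct prime divisors of $q$ (the error bound comes from summing $1$ over squarefree divisors of $q$).

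Substituting $L = q^\delta$, the goal reduces to showing that the main term $q^\delta \cdot \varphi(q)/q$ eventually dominates $2^{\omega(q)}$. For the main term, the classical Mertens-type lower bound $\varphi(q)/q \gg 1/\log\log q$ (valid for $q \geq 3$) gives a factor of size at least $q^\delta / \log\log q$. For the error, the standard estimate $2^{\omega(q)} \leq \tau(q) \ll_\epsilon q^\epsilon$ holds for any $\epsilon > 0$.

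Choosing $\epsilon = \delta/2$, we obtain
\begin{equation*}
q^\delta \cdot \varphi(q)/q - C \cdot 2^{\omega(q)} \gg q^\delta/\log\log q - C_\epsilon q^{\delta/2},
\end{equation*}
which is strictly positive once $q$ exceeds some threshold $q_0 = q_0(\delta)$. For such $q$, the count of integers in $[x,x+q^\delta)$ coprime to $q$ is at least $1$, regardless of $x$, completing the proof.

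I do not anticipate a serious obstacle: the only thing to be careful about is that the estimates for $\varphi(q)/q$ and $2^{\omega(q)}$ are uniform in $q$ (which they are), so that $q_0$ depends only on $\delta$ and not on $x$. One could even avoid the $\log\log q$ factor by using the cruder bound $\varphi(q)/q \geq q^{-\epsilon}$ (valid for large $q$), which streamlines the comparison but is otherwise equivalent.
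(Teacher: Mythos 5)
Your proof is correct and follows essentially the same route as the paper: Möbius inclusion--exclusion over divisors of $q$ gives a main term $q^\delta\varphi(q)/q$ and an error term bounded by the number of (squarefree) divisors, and then standard bounds on $\varphi(q)/q$ and the divisor-count make the main term dominate. The paper simply uses the cruder bounds $\varphi(q) \gg q^{1-\delta/3}$ and $\tau(q) \ll q^{\delta/3}$ in place of your Mertens-type bound and $2^{\omega(q)}$, a purely cosmetic difference (which you yourself note at the end).
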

\begin{proof}
	Using an inclusion-exclusion argument, we find that
	\begin{align}
		\sum_{\substack{ x \leq n < x+q^\delta \\ (n,q) = 1} } 1
		= \sum_{d | q} \left(\mu(d) \frac{q^\delta}{d} + O(1)\right)
		= q^\delta \sum_{d|q} \frac{\mu(d)}{d} + O(\tau(q)) 
		= q^\delta \frac{\varphi(q)}{q} + O(\tau(q)), \label{eq:637}
	\end{align}
	where $\mu$ is the M\"{o}bius function, $\varphi$ is the Euler totient function, and $\tau$ is the number of divisors function. Using the standard estimates $\varphi(q) \gg q^{1-\delta/3}$ and $\tau(q) \ll q^{\delta/3}$, we conclude that the expression in \eqref{eq:637} is positive for $q$ large enough.
\end{proof}

\begin{proposition}\label{prop:nicely-sparse=>gen-poly}
Let $C$, $D$ be integers satisfying $5 \leq C < D \leq \frac{1}{2} (C-1)^2$. Suppose that the sequence $(n_i)_{i \geq 0}$ of integers satisfies $n_{0} \geq 2$ and $n_{i}^D < n_{i+1} < n_{i}^{2D}$ for $i \geq 0$. Then there exists $\a \in (0,1)$ such that the symmetric difference of the sets $E = \set{n_i}{ i \geq 0}$ and $$E' = \set{n \in \NN_0}{ \frac{1}{4} n^{-C+1} \leq \fpa{n \a} \leq \frac{1}{2} n^{-C+1}}$$  is finite.
\end{proposition}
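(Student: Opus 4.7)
The plan is to construct $\alpha$ as the unique point in a nested sequence of closed intervals $I_i = [k_i/n_i + \sigma_i/(4 n_i^C),\ k_i/n_i + \sigma_i/(2 n_i^C)]$, indexed by $i \geq i_0$ for some sufficiently large $i_0$, where $\sigma_i \in \{\pm 1\}$ and $k_i \in \ZZ$ is coprime to $n_i$. Each $I_i$ has length $\tfrac{1}{4} n_i^{-C}$, and we will ensure $I_{i+1} \subset I_i$, so the intersection is a single point $\alpha$. Automatically $\fpa{n_i \alpha} = n_i \cdot |\alpha - k_i/n_i| \in [\tfrac{1}{4} n_i^{-C+1},\ \tfrac{1}{2} n_i^{-C+1}]$, so $n_i \in E'$ for every $i \geq i_0$; this gives $E \setminus E'$ finite.

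The inductive step is the heart of the argument. Given $I_i$, the condition $I_{i+1} \subset I_i$ restricts $k_{i+1}$ (for each choice of sign $\sigma_{i+1}$) to an integer window of width $W \asymp n_{i+1}/n_i^C$; in addition I will demand (a) $\gcd(k_{i+1}, n_{i+1}) = 1$ and (b) for every $n \in (n_i, n_{i+1})$, the residue $t_n \equiv n k_{i+1} \pmod{n_{i+1}}$, taken in $(-n_{i+1}/2, n_{i+1}/2]$, satisfies $|t_n| \notin [n_{i+1}/(4 n^{C-1}),\ n_{i+1}/(2 n^{C-1})]$. Condition (b) is vacuous unless this bad interval contains an integer, i.e., unless $n \leq n^* := (n_{i+1}/2)^{1/(C-1)}$. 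The hypothesis $D \leq (C-1)^2/2$, combined with $n_{i+1} < n_i^{2D}$, gives $n^* \leq n_i^{2D/(C-1)} \leq n_i^{C-1}$, so in particular $n^*/n_i^C < 1$. For fixed $n \in (n_i, n^*]$ with $d := \gcd(n, n_{i+1})$, the bad residues mod $n_{i+1}$ form a union of at most $n_{i+1}/(2 d n^{C-1}) + O(1)$ arithmetic progressions with common difference $n_{i+1}/d$; since $Wd/n_{i+1} \leq d/n_i^C < 1$, each such progression contributes at most one element to the window, giving a bad count of $O(W/n^{C-1})$ for this $n$. Summing over $n \in (n_i, n^*]$, using $\sum_{n > n_i} n^{-(C-1)} \ll n_i^{2-C}$ and the standard divisor bound $\sum_{n \leq n^*} \gcd(n,n_{i+1}) \ll n^* n_{i+1}^{o(1)}$ to control boundary contributions, the total bad count is $O(W n_i^{2-C})$. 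On the other hand, Lemma \ref{lem:coprimes-in-intervals} (whose proof actually produces $\gg q^\delta \varphi(q)/q$ coprimes in any interval of length $q^\delta$) ensures — since $W \gg n_{i+1}^{1-C/D}$ and $D > C$ — that the window contains $\gg W \varphi(n_{i+1})/n_{i+1}$ integers coprime to $n_{i+1}$. Because $n_{i+1}/\varphi(n_{i+1}) \ll \log \log n_{i+1} \ll n_i^{C-2}$, the coprime count comfortably dominates the bad count, so a valid $k_{i+1}$ exists.

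Finally, I verify $E' \setminus E$ is finite. For $n$ large with $n_i < n < n_{i+1}$, using $|\alpha - k_{i+1}/n_{i+1}| \leq 1/(2 n_{i+1}^C)$ one gets $\fpa{n \alpha} = |t_n|/n_{i+1} + O(n_{i+1}^{-C+1})$, and $|t_n| \geq 1$ because $\gcd(k_{i+1}, n_{i+1}) = 1$ and $n_{i+1} \nmid n$. If $n > n^*$, then $n^{C-1} > n_{i+1}/2$ gives $|t_n|/n_{i+1} \geq 1/n_{i+1} > \tfrac{1}{2} n^{-C+1}$, hence $\fpa{n\alpha} > \tfrac{1}{2} n^{-C+1}$ and $n \notin E'$; if $n \leq n^*$, condition (b) places $|t_n|/n_{i+1}$ outside the critical interval, the $O(n_{i+1}^{-C+1})$ perturbation being absorbed by a small safety buffer inserted into the avoidance region in (b). The main obstacle is the counting argument in the inductive step, where the numerical hypothesis $D \leq (C-1)^2/2$ is consumed precisely to force $n^* \leq n_i^{C-1}$, ensuring each bad arithmetic progression hits the narrow window of width $W \asymp n_{i+1}/n_i^C$ in at most one place; without this, a single $n$ could contribute many bad $k_{i+1}$ to the window and the coprime count would no longer dominate.
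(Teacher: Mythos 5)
Your construction of $\alpha$ as the intersection of nested intervals centred at $k_i/n_i$ with $\gcd(k_i,n_i)=1$ for large $i$, and the resulting inclusion $E\subseteq E'$ up to finitely many terms, matches the paper. Where you diverge is in proving $E'\setminus E$ finite. You try to build avoidance of intermediate $n$ directly into the induction via condition~(b), and then run a union-bound count to show a valid $k_{i+1}$ survives. The paper instead observes that, by Legendre's theorem, every $n\in E'$ must arise from a continued-fraction convergent $p_j/q_j$ of $\alpha$ with $q_{j+1}\geq q_j^{C-1}$; since the $n_i$ are (for large $i$) themselves such convergents, any intermediate convergent denominator $q_j$ with $j(i)<j<j(i+1)$ would force $n_{i+1}=q_{j(i+1)}\geq q_{j+1}\geq q_j^{C-1}\geq q_{j(i)+1}^{C-1}\geq n_i^{(C-1)^2}$, contradicting $n_{i+1}<n_i^{2D}\leq n_i^{(C-1)^2}$. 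That argument uses no counting at all, and it is where the hypothesis $2D\leq(C-1)^2$ is actually consumed.

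Your counting step is not correct as written. You assert that the bad count for a fixed $n$ is $O(W/n^{C-1})$, justified by: the bad $k_{i+1}$ form $\approx n_{i+1}/(2dn^{C-1})+O(1)$ arithmetic progressions of common difference $n_{i+1}/d$, each meeting the window at most once. But that yields a bound of $n_{i+1}/(2dn^{C-1})+O(1)$, which equals $O(W/n^{C-1})$ only when $d\gg n_{i+1}/W\asymp n_i^C$, and this is impossible since $d\leq n\leq n^*<n_i^{C-1}$. Even the sharper spacing estimate --- the points $nk_{i+1}\bmod n_{i+1}$ for $k_{i+1}$ in the window are spaced $n$ apart, so each bad interval of length $\approx n_{i+1}/(4n^{C-1})$ contains at most $n_{i+1}/(4n^C)+1$ of them --- gives a per-$n$ bound of $n_{i+1}/(2n^C)+O(1)$. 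Summed over $n_i<n\leq n^*$ this has main term $\asymp n_{i+1}/n_i^{C-1}\asymp Wn_i$, which exceeds $W$ by a factor of $n_i$, and the $O(1)$ boundary terms alone already contribute $\asymp n^*$, which exceeds $W$ whenever $D<C(C-1)/(C-2)$ (e.g.\ $C=5$, $D=6$). The obstruction is genuine: for $n$ slightly larger than $n_i$ (say $n=n_i+1$, coprime to $n_{i+1}$), the image $\{nk_{i+1}\bmod n_{i+1}: k_{i+1}\in\text{window}\}$ has diameter $\approx n_iW$, comparable to the size $\approx n_iW/2$ of the bad set, so an adversarial alignment can make a constant fraction of the window bad for that single $n$. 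Ruling this out requires structural input about which $n$ can approximate $\alpha$ well --- which is exactly what Legendre's theorem supplies and what a naive union bound cannot.
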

\begin{proof}
	We inductively construct a sequence of integers $(m_i)_{i \geq 0}$ such that the  intervals $I_i = [\frac{m_i}{n_i} + \frac{1}{4} n_i^{-C}, \frac{m_i}{n_i} + \frac{1}{2} n_i^{-C}]$ are nested (i.e.\ $I_{i+1} \subset I_{i}$) and moreover $(m_i,n_i) = 1$ for sufficiently large $i$. Choose $m_0 = 0$. If $m_i$ has been constructed, then the possible choices of $m_{i+1}$ such that the nesting condition is satisfied form an interval of length $\gg n_{i+1}/n_{i}^C \gg n_{i+1}^{1-C/D}$. Because $C<D$, by Lemma \ref{lem:coprimes-in-intervals}, for sufficiently large $i$ there is a choice of $m_{i+1}$ with $(m_{i+1},n_{i+1}) = 1$.
	
	Once the sequence $(m_i)_{i \geq 0}$ has been constructed, pick $\a$ so that $\{\a\} = \bigcap_{i \geq 0} I_{i}$. By construction, for this choice of $\a$ we have $E \subseteq E'$. Let $\a = [0;a_1,a_2,\dots]$ be the continued fraction expansion of $\a$, and let $\frac{p_j}{q_j} = [0;a_1,\dots,a_j]$ be the corresponding convergents. We recall an estimate for the error term of the approximation: $\frac{1}{2} \frac{1}{q_j q_{j+1}} \leq | \a - \frac{p_j}{q_j}| \leq \frac{1}{q_j q_{j+1}}$. Since $C \geq 2$, the classical theorem of Legendre implies that any approximation $\frac{m}{n}$ with $(m,n) = 1$ and $| \a - \frac{m}{n} |\leq \frac{1}{2} n^{-C}$ is equal to  $\frac{p_j}{q_j}$ for some $j$. Furthermore, in this case $q_{j+1} \geq q_j^{C-1}$. In particular, for each $i$, there is $j(i)$ such that $\frac{m_i}{n_i} = \frac{p_{j(i)}}{q_{j(i)}}$ and $q_{j(i)+1} \geq q_{j(i)}^{C-1}$. 
	
	Suppose now that $n \in E' \setminus E$. Then there exists $m$ (not necessarily coprime to $n$) with $\frac{1}{4} n^{-C} \leq | \a - \frac{m}{n} | \leq \frac{1}{2} n^{-C}$, whence $\frac{m}{n} = \frac{p_j}{q_j}$ for some $j$. If $\frac{m}{n}$ were equal to $\frac{m_i}{n_i}$ for some $i$ sufficiently large that $(n_i,m_i) = 1$, then contradiction would follow immediately: $n^{-C} \leq \frac{1}{4} n_i^{-C} \leq | \a - \frac{m_i}{n_i} | = | \a - \frac{m}{n} | \leq  \frac{1}{2} n^{-C}$. The cases when $q_j$ is bounded account for finitely many possible values of $n$. Hence, we may assume that $j$ lies strictly between $j(i)$ and $j(i+1)$ for some $i$ which is sufficiently large so that $(m_{k},n_{k}) = 1$ for $k \geq i$. We then have the inequalities $$q_{j} \geq q_{j(i)+1} \geq q_{j(i)}^{C-1}=n_i^{C-1}$$ and $$n_{i+1} = q_{j(i+1)} \geq q_{j+1} \geq q_{j}^{C-1} \geq n_i^{(C-1)^2},$$ which imply that $n_{i}^{2D} > n_{i+1} \geq n_{i}^{(C-1)^2}$, contradicting $2D \leq (C-1)^2$.
\end{proof}	

We are now ready to prove another one of our main results, whose formulation we reiterate.
\setcounter{alphatheorem}{2}
\begin{alphatheorem}\label{cor:very-sparse=>gen-poly}
	There exists a constant $c > 0$ such that for any sequence $(n_i)_{i \geq 0}$ with $n_0 \geq 2$ and $n_{i+1} \geq n_{i}^c$ for all $i \geq 0$, the set $E = \set{n_i}{i \geq 0}$ is generalised polynomial.
\end{alphatheorem}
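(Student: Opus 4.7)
The plan is to reduce Theorem \ref{cor:very-sparse=>gen-poly} to Proposition \ref{prop:nicely-sparse=>gen-poly} by a padding construction. Fix $C = 5$ and $D = 8$, which satisfy $5 \leq C < D \leq (C-1)^2/2$, and take the constant $c$ to be sufficiently large (for instance $c = 4096$ will turn out to suffice).

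First I would perform a parity reduction. Write $E = E_{\mathrm{odd}} \cup E_{\mathrm{even}}$ according to the parity of each $n_i$. Each piece is a subsequence $\set{n_{i_k}}{k \geq 0}$ of $(n_i)$, and since $n_{i_{k+1}} \geq n_{i_k+1} \geq n_{i_k}^c$, the growth hypothesis is preserved. As a finite union of generalised polynomial sets is generalised polynomial, it suffices to prove each piece is generalised polynomial; without loss of generality, say all $n_i$ are odd (the even case is symmetric, padding with odd integers).

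Next I would construct a padded sequence $(\tilde n_j)_{j \geq 0}$ by inserting \emph{even} integers into $(n_i)$ so as to satisfy $\tilde n_j^D < \tilde n_{j+1} < \tilde n_j^{2D}$ everywhere. Between $n_i$ and $n_{i+1}$, I would place $K_i$ even integers $m_{i,1} < \cdots < m_{i,K_i}$ with
$$ n_i^D < m_{i,1} < n_i^{2D}, \qquad m_{i,k}^D < m_{i,k+1} < m_{i,k}^{2D}, \qquad m_{i,K_i}^D < n_{i+1} < m_{i,K_i}^{2D}.$$
Writing $c_i = \log n_{i+1}/\log n_i$, such a $K_i$ exists precisely when $c_i \in (D^{K_i+1}, (2D)^{K_i+1})$, i.e.\ $K_i + 1$ lies in the interval $(\log c_i/\log 2D,\ \log c_i/\log D)$. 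This interval has length $\log c_i \cdot \log 2 / (\log D \log 2D)$, which exceeds $1$ as soon as $c_i \geq \exp(\log D \log 2D/\log 2)$; for $D = 8$, this threshold is $2^{12} = 4096$. Since $c_i \geq c$ by hypothesis, choosing $c \geq 4096$ guarantees a valid $K_i$ for every $i$. One then writes $c_i = \prod_{k=0}^{K_i} D_k$ with each $D_k \in (D, 2D)$ (possible as $c_i^{1/(K_i+1)} \in (D, 2D)$) and successively picks $m_{i,k}$ to be an even integer close to $n_i^{D_0 \cdots D_{k-1}}$, which one can do inside the (very wide) range $(m_{i,k-1}^D, m_{i,k-1}^{2D})$; the terminal constraint at $k = K_i$ is then met automatically by slight adjustment of the $D_k$.

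Finally, applying Proposition \ref{prop:nicely-sparse=>gen-poly} to $(\tilde n_j)$ shows that $\tilde E = \set{\tilde n_j}{j \geq 0}$ is generalised polynomial. By construction $E \subseteq \tilde E$, the original points are odd, and all padding points are even, so
$$E = \tilde E \cap \set{n \in \NN_0}{n \text{ is odd}},$$
and since the set of odd integers is generalised polynomial, so is the intersection. The main obstacle in this plan is the integer padding construction of the second step: one must satisfy the tight multiplicative growth constraints demanded by Proposition \ref{prop:nicely-sparse=>gen-poly} simultaneously with the parity marker needed to separate original from padding points; both conditions become jointly achievable once $c$ is taken large enough relative to $D$. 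The remaining ingredients (parity splitting, Boolean closure, absorption of finitely many exceptional indices coming from the finite symmetric difference in Proposition \ref{prop:nicely-sparse=>gen-poly}) are routine.
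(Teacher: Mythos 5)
Your proof is correct and follows the same overall strategy as the paper's: reduce to Proposition~\ref{prop:nicely-sparse=>gen-poly} by geometrically interpolating the gaps between consecutive $n_i$ (your formula $n_i^{D_0\cdots D_{k-1}}$ with $D_k = c_i^{1/(K_i+1)}$ is literally the paper's $\exp\bigl((\log n_{i+1})^{k/l}(\log n_i)^{1-k/l}\bigr)$), then extract $E$ from the padded set by a marker trick. The one genuine difference lies in the marker. The paper keeps a single sequence and builds a \emph{second} padded sequence $(n''_j)$ in which each padding term is shifted by $+1$ while the original $n_i$ are kept fixed; then $E = E' \cap E''$, which requires a (trivial, given the rapid growth) check that no $n'_j$ accidentally equals $n'_{j'}+1$. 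You instead split $E$ by parity up front, pad each piece with the opposite parity, and recover each piece as $\tilde E \cap \{\text{odd}\}$ or $\tilde E \cap \{\text{even}\}$; the split is absorbed by Boolean closure of generalised polynomial sets. Both tricks are sound and of comparable complexity. Two small nits: with $D=8$ the requirement is $c > 4096$ strictly, since at $\log c_i = 12\log 2$ the open interval $(\log c_i/\log 16, \log c_i/\log 8) = (3,4)$ contains no integer (the paper's $D=6$ together with a more careful bound yields $c = 617$, cf.\ Remark 4.13); and one should, as in the paper, take $c$ a bit larger still so that the ratios $D_k$ stay bounded away from the endpoints $D$ and $2D$, making the rounding to the nearest integer of the right parity harmless. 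Neither affects the validity of the argument.
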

\begin{proof}
	We first construct a sequence  $(n_j')_{j \geq 0}$ of integers with $n_j'^6 < n_{j+1}' < n_{j}'^{12}$ for sufficiently large $j$ and such that $(n_i)_{i \geq 0}$ is its subsequence. We construct $n_j'$ inductively, keeping track of $j(i)$ such that $n_{j(i)}'=n_i$. For $i = 0$, pick $j(0) = 0$ and $n'_0 = n_0$. If $j(i)$ and $(n_j)_{0 \leq j \leq j(i)}$ have been constructed, we define $j(i+1) = j(i) + l$ and $n_{j(i)+k}' = \floor{ \exp( (\log n_{i+1})^{k/l} (\log n_{i})^{1-k/l}  ) }$ for $0 \leq k \leq l$, where $l$ remains to be chosen. This ensures that $\frac{\log n_{j(i)+k+1}'}{\log n_{j(i)+k}'} = \bra{ \frac{\log n_{i+1}}{\log n_i} }^{1/l} + o(1)$ as $i \to \infty$. Putting $A = \frac{\log n_{i+1}}{\log n_i} \geq c$, and letting $l$ be any integer in the interval $( \frac{\log A}{\log 11}, \frac{\log A}{\log 7} )$, we obtain (for sufficiently large $i$) the inequality  $6 < \frac{\log n_{j(i)+k+1}'}{\log n_{j(i)+k}'} < 12$. This interval indeed contains at least one integer, provided that $c$ is large enough.
	
	Applying Proposition \ref{prop:nicely-sparse=>gen-poly} with $C = 5, D = 6$, we conclude that the set $E' = \set{n_j'}{j \geq 0}$ is generalised polynomial. By the same argument, the set $E'' = \set{n_j''}{j \geq 0}$ is also generalised polynomial, where $n''_{j(i)} = n_i$ and $n''_j = n'_j + 1$ if $j \neq j(i)$ for all $i$. It follows that the set $E' \cap E'' = E$ is generalised polynomial, as required.	\end{proof}

\begin{remark} The constant $c$ in the previous proposition can be effectively computed. A slight modification of the method above gives $c= 617$.

\end{remark} 
\section{Exponential sequences}\label{sec:Exponential}
 In order to construct a generalised polynomial that takes value zero exactly on the set of Fibonacci numbers, we used the theory of continued fractions. In this section, we will do this more generally for certain linear recurrence sequences of degree $2$ and $3$. We need to begin, however, by the following basic result.

 \begin{proposition}\label{przepraszamzetakpozno} Let $\beta$ be a Pisot number (or a Pisot-Vijayaraghavan number), i.e.\ a real algebraic integer $\beta>1$ such that all its Galois conjugates have absolute value smaller than one. Let $P(x)=x^d-c_1x^{d-1}-\ldots-c_d$ be the minimal polynomial of $\beta$. Let $(R_i)_{i\geq 0}$ be a sequence of integers satisfying the linear recurrence relation $$R_{i+d}=c_1R_{i+d-1}+\ldots+c_dR_i, \quad i\geq 0.$$  Assume that $(R_i)_{i\geq 0}$ is not identically zero. Then the following conditions are equivalent: \begin{enumerate} \item The set $\{R_i\mid i \in \N_0\}$ is generalised polynomial. \item The set $
 \{\lbr \beta^i \rbr \mid i \in \N_0\}$ is generalised polynomial. \end{enumerate} In particular, the question whether the set $\{R_i\mid i\in \N_0\}$ is generalised polynomial depends only on the recurrence relation and not on the initial values of $(R_i)_{i\geq 0}$.\end{proposition}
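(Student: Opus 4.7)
The key observation is that, since $\beta$ is Pisot with conjugates $\beta_1 = \beta, \beta_2, \ldots, \beta_d$ satisfying $|\beta_j| < 1$ for $j \geq 2$, every real solution of the recurrence admits an expansion $T_i = A \beta^i + \sum_{j=2}^d A_j \beta_j^i$ in which the tail decays as $O(r^i)$ for some $r < 1$. First I apply this to the trace sequence $S_i = \sum_j \beta_j^i$: by Newton's identities, $S_i$ is an integer sequence satisfying the recurrence, and $|S_i - \beta^i| \to 0$ exponentially, so $S_i = \lbr \beta^i \rbr$ for all sufficiently large $i$. Consequently the sets $\{\lbr \beta^i \rbr : i \in \N_0\}$ and $\{S_i : i \in \N_0\}$ differ by a finite set, and it suffices to establish the equivalence with $S_i$ in place of $\lbr \beta^i \rbr$.

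Writing $R_i = A \beta^i + O(r^i)$, the next step is to show $A \neq 0$. Since $\beta$ is a nonzero algebraic integer, $c_d = \pm\prod_j \beta_j \neq 0$, so the recurrence uniquely determines $(R_i)$ in both directions from any $d$ consecutive values; if $A = 0$ then $R_i \to 0$, forcing $R_i = 0$ for all large $i$ and hence for all $i$, contradicting the hypothesis. Setting $\lambda = A$, we obtain
$$R_i - \lambda S_i = \sum_{j=2}^d (A_j - \lambda) \beta_j^i \longrightarrow 0$$
exponentially as $i \to \infty$, so there is $i_0$ such that for all $i \geq i_0$,
$$R_i = \lbr \lambda S_i \rbr \qquad \text{and} \qquad S_i = \lbr \lambda^{-1} R_i \rbr.$$
Thus $R_i \mapsto S_i$ and $S_i \mapsto R_i$ are mutually inverse bijections between the tails of the two sequences.

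Finally I translate this into the language of generalised polynomials. Suppose $\{S_i : i \in \N_0\}$ is generalised polynomial with characteristic function $\chi_S$. Then for all $n$ outside a finite set,
$$\ifbra{\,n \in \{R_i\}\,} = \ifbra{\, n = \lbr \lambda \lbr \lambda^{-1} n \rbr \rbr\,} \cdot \chi_S\!\bigl(\lbr \lambda^{-1} n \rbr\bigr),$$
because the right-hand side is $1$ precisely when $m = \lbr \lambda^{-1} n \rbr$ belongs to $\{S_i : i \geq i_0\}$ and $\lbr \lambda m \rbr = n$. The map $n \mapsto \lbr \lambda^{-1} n \rbr$ is an integer-valued generalised polynomial, and the class of generalised polynomials is closed under substitution into the variable (immediate by induction on the defining operations), so $\chi_S(\lbr \lambda^{-1} n \rbr)$ is a generalised polynomial. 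The Iverson bracket is the indicator of the zero set of the generalised polynomial $n - \lbr \lambda \lbr \lambda^{-1} n \rbr \rbr$, hence is a generalised polynomial by Lemma \ref{lem:gen-poly-set-of-zeros}; the discrepancy on a finite initial segment is absorbed since finite sets are generalised polynomial. The converse direction is symmetric with $\lambda$ replaced by $\lambda^{-1}$. The only real obstacle is ensuring that $\lambda \neq 0$ so that the bijection exists, which is precisely why the non-triviality hypothesis on $(R_i)$ is needed; the final assertion that the property depends only on the recurrence then follows by transitivity.
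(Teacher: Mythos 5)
Your proposal is correct and follows essentially the same route as the paper: both reduce $\lbr\beta^i\rbr$ to the integer trace sequence, use the Pisot property and nontriviality to get $R_i = \lambda S_i + o(1)$ with $\lambda \neq 0$, and then translate the near-bijection into a generalised-polynomial characterisation via nearest-integer maps. The only cosmetic difference is in the last step, where the paper encodes membership via the condition $\fpa{um}<|u|/2$ while you use the equivalent "round-trip" test $n=\lbr\lambda\lbr\lambda^{-1}n\rbr\rbr$; both are instances of the same nearest-integer bijection argument.
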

 
 \begin{proof} From the form of the linear recurrence relation for $(R_i)_{i\geq 0},$ we see that $R_i=u\beta^i + o(1)$ for some $u\in \R$. Note that $u \neq 0$. In fact, otherwise $R_i=o(1)$ and since $R_i$ takes integer values we see that $R_i=0$ for large $i$. It follows that $R_i$ is identically zero (note that $c_d\neq 0$). 
 
 Since $\beta$ is an algebraic integer, we know that $\sum_{\beta'} (\beta')^i$ is an integer, the sum being taken over all the Galois conjugates $\beta'$ of $\beta$. Since $|\beta'|<1$ for $\beta'\neq \beta$, we see that $\beta^i = \lbr \beta^i \rbr + o(1).$ It follows that $$R_i = u \lbr \beta^i \rbr + o(1)$$ and the claim follows immediately from the following lemma. \end{proof}
 \begin{lemma} Let $u\in \R$, $u\neq 0$, and let  $(R_i)_{i\geq 0} $ and $(S_i)_{i\geq 0} $ be integer-valued sequences such that $$R_i=uS_i+o(1).$$ Then the set  $\{R_i\mid i\in \N_0\}$ is generalised polynomial if and only if the set  $\{S_i\mid i\in \N_0\}$ is generalised polynomial.
 \end{lemma}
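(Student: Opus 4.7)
The plan is to reduce to a tractable case and then exploit the injectivity of the nearest-integer scaling map $\phi(m)=\lbr um\rbr$.

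First I would carry out two preliminary reductions. Since $\{T_i : i \in \N_0\}$ is generalised polynomial if and only if $\{-T_i : i \in \N_0\}$ is (compose the characteristic function with $n\mapsto -n$), replacing $S_i$ by $-S_i$ and $u$ by $-u$ shows we may assume $u>0$. Solving $R_i=uS_i+o(1)$ for $S_i$ gives $S_i=u^{-1}R_i+o(1)$, so the roles of $(R,u)$ and $(S,1/u)$ are interchangeable, and we may further assume $u\geq 1$. The case $u=1$ is immediate, since then $R_i-S_i$ is an integer tending to zero, hence $R_i=S_i$ for all sufficiently large $i$ and the two sets differ by a finite set. So from now on assume $u>1$.

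Choose $N$ with $|R_i-uS_i|<1/2$ for $i\geq N$; then $R_i=\lbr uS_i\rbr$ for such $i$. Define $\phi\colon\Z\to\Z$ by $\phi(m)=\lbr um\rbr=\lfloor um+1/2\rfloor$. Since $u>1$, $\phi(m+1)-\phi(m)\geq\lfloor u\rfloor\geq 1$, so $\phi$ is strictly increasing and in particular injective. Set $E_R^{\mathrm{tail}}=\{R_i : i\geq N\}$ and $E_S^{\mathrm{tail}}=\{S_i : i\geq N\}$; these differ from $E_R$ and $E_S$ by finite sets, and the class of generalised polynomial sets is closed under finite symmetric differences. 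The crucial relation is $E_R^{\mathrm{tail}}=\phi(E_S^{\mathrm{tail}})$.

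For the direction $E_R$ generalised polynomial $\Rightarrow E_S$ generalised polynomial, injectivity of $\phi$ gives
\[
\ifbra{m\in E_S^{\mathrm{tail}}}=\ifbra{\phi(m)\in E_R^{\mathrm{tail}}}=\ifbra{\lbr um\rbr\in E_R^{\mathrm{tail}}},
\]
which is the composition of the generalised polynomial characteristic function of $E_R^{\mathrm{tail}}$ with the integer-valued generalised polynomial $m\mapsto\lbr um\rbr$. For the converse direction, since $u>1$ the interval $[(n-1/2)/u,(n+1/2)/u)$ has length $<1$ and contains at most one integer, necessarily $\lbr n/u\rbr$ if any. Thus $n\in E_R^{\mathrm{tail}}$ if and only if $\lbr n/u\rbr\in E_S^{\mathrm{tail}}$ and $\lbr u\lbr n/u\rbr\rbr=n$, yielding
\[
\ifbra{n\in E_R^{\mathrm{tail}}}=\ifbra{\lbr n/u\rbr\in E_S^{\mathrm{tail}}}\cdot\ifbra{\lbr u\lbr n/u\rbr\rbr=n}.
\]
The first factor is generalised polynomial by composition, and the second is generalised polynomial by Lemma~\ref{lem:gen-poly-set-of-zeros} applied to the integer-valued generalised polynomial $h(n)=\lbr u\lbr n/u\rbr\rbr-n$.

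There are no substantive obstacles; the argument is entirely routine. The one genuinely useful point is the reduction to $u>1$, since it is precisely the injectivity of $\phi$ (equivalently, the uniqueness of an integer in the preimage interval) that lets both directions be expressed as single generalised polynomial compositions.
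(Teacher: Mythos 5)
Your proof is correct and close in spirit to the paper's, which also hinges on the nearest-integer scaling $m\mapsto\lbr um\rbr$ and the $(R,u)\leftrightarrow(S,1/u)$ symmetry. The one real difference in execution: the paper gives a single characterization valid for every $u\neq 0$, namely that up to a finite exceptional set $m\in\{S_i\}$ if and only if $\lbr um\rbr\in\{R_i\}$ \emph{and} $\fpa{um}<|u|/2$; it proves one implication from this and then invokes the $R\leftrightarrow S$ symmetry for the converse. You instead deploy the symmetry up front to reduce to $u>1$, where $\phi(m)=\lbr um\rbr$ is strictly increasing and the auxiliary condition $\fpa{um}<|u|/2$ becomes vacuous (since $\fpa{um}\leq 1/2<u/2$), giving the cleaner formula $\ifbra{m\in E_S}=\ifbra{\lbr um\rbr\in E_R}$, but you then need a separate dual formula with the filtering condition $\lbr u\lbr n/u\rbr\rbr=n$ for the reverse implication. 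Both routes are correct and of similar length; the paper's is slightly more uniform (one formula, one symmetry step), while yours lets the injectivity of $\phi$ do the work that the extra condition $\fpa{um}<|u|/2$ does in the paper, at the cost of arguing the two directions separately.
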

 
 \begin{proof} It follows from the relation $R_i=uS_i+o(1)$ that for sufficiently large $i$, an integer $m$ is of the form $m=S_i$ if and only if $\lbr u m \rbr = R_i$ and $\fpa{um}<|u|/2$. We immediately see from this that if the set $\{R_i\mid i\in \N_0\}$ is generalised polynomial, then so is  $\{S_i\mid i\in \N_0\}$. Since the argument is symmetric with respect to $(R_i)$ and $(S_i)$, we conclude the claim.\end{proof}
 
 \subsection*{Quadratic Pisot numbers}
 
 In this subsection we will generalise Example \ref{ex:lin-rec-is-gp}, where we have shown that the set of Fibonacci numbers in generalised polynomial.
 
 \begin{proposition}\label{thm:quadraticPisotisGP} Let $\beta$ be a quadratic Pisot unit, i.e.\ a quadratic Pisot number of (field) norm $\pm 1$.  Then the set $\{\lbr \beta^i \rbr \mid i\in \N_0\}$ is generalised polynomial.\end{proposition}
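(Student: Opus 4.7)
By Proposition \ref{przepraszamzetakpozno}, it suffices, for each such $\beta$, to exhibit some non-zero integer sequence $(R_i)_{i\geq 0}$ satisfying the recurrence coming from the minimal polynomial of $\beta$ such that $\{R_i\mid i\geq 0\}$ is a generalised polynomial set. For $\beta^2=a\beta+1$ with $a\geq 1$ this is Example \ref{ex:lin-rec-is-gp} applied to $R_0=0$, $R_1=1$; the real content is the case $\beta^2=a\beta-1$ with $a\geq 3$, which I would treat in parallel.

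In that case I would take $R_0=0$, $R_1=1$, $R_{i+2}=aR_{i+1}-R_i$, whose closed form is $R_i=(\beta^i-\beta^{-i})/\sqrt{a^2-4}$ (recall $\beta-1/\beta=\sqrt{a^2-4}$). Setting $\alpha=1/\beta\in(0,1)$, a direct computation yields
\[
R_i\alpha-R_{i-1}=\frac{\beta^{-i-1}(\beta^2-1)}{\sqrt{a^2-4}}>0,
\]
and the identity $(\beta^2-1)/\beta=\sqrt{a^2-4}$ gives $R_i\cdot(R_i\alpha-R_{i-1})\to 1/\sqrt{a^2-4}$. Thus $\fp{R_i\alpha}$ is a small positive quantity asymptotic to $1/(\sqrt{a^2-4}\,R_i)$.

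Following the template of Example \ref{ex:lin-rec-is-gp}, I would then argue that for an appropriate constant $c>0$ the set
\[
E:=\{n>0\mid 0<n\fp{n\alpha}<c,\ \fp{n\alpha}<1/2\}
\]
coincides with $\{R_i\mid i\geq i_0\}$ up to finitely many values. The continued fraction of $\alpha$ is eventually periodic, namely $\alpha=[0;a-1,\overline{1,a-2}]$, because $\alpha$ is a quadratic Pisot unit; a short induction on the convergent recurrence then gives $q_{2j}=R_{j+1}$ for all $j\geq 0$. Odd-indexed convergents have $\fp{q_k\alpha}$ close to $1$ (since the sign of $q_k\alpha-p_k$ alternates with $k$), hence are filtered out by the condition $\fp{n\alpha}<1/2$. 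By Legendre's theorem, any $n\in E$ must be an integer multiple of some convergent denominator $q_k$; the integer multiple $mq_{2j}$ asymptotically satisfies $n\fp{n\alpha}\to m^2/\sqrt{a^2-4}$, so choosing $c\in(1/\sqrt{a^2-4},\,4/\sqrt{a^2-4})$ isolates $m=1$. Applying Lemma \ref{lem:gen-poly-set-of-zeros} to the generalised polynomial $n\mapsto n\fp{n\alpha}$, the set $E$ is generalised polynomial; correcting for finitely many initial values gives that $\{R_i\mid i\geq 0\}$ is, and Proposition \ref{przepraszamzetakpozno} concludes.

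The main obstacle is the tight calibration of $c$: one must choose $c$ strictly between $1/\sqrt{a^2-4}$ and $4/\sqrt{a^2-4}$ in order to capture all sufficiently large $R_i$ while simultaneously excluding both the odd-indexed convergents (handled by the sign condition) and integer multiples $mR_i$ with $m\geq 2$. This is essentially a Hurwitz-Markov type analysis, made tractable because the periodic continued fraction of $\alpha$ inherited from $\beta$ being a quadratic Pisot unit makes the Markov constant and its multiples entirely explicit.
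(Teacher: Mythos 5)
Your proof is correct, and for the norm-$+1$ case ($\beta^2=a\beta-1$, $a\geq 3$) it takes a genuinely different route from the paper's. Both arguments begin identically (the norm-$-1$ case is Example \ref{ex:lin-rec-is-gp} plus Proposition \ref{przepraszamzetakpozno}, and both use the eventually periodic continued fraction $[a-1;\overline{1,a-2}]$ in the norm-$+1$ case), but the filtering of unwanted convergents diverges. The paper works with $E'=\set{n}{\norm{n\beta}<1/(2n)}$, which is symmetric in $\fpa{\,\cdot\,}$ and therefore contains both parities of convergent denominator; it then removes the even-indexed ones using the auxiliary rounding map $n\mapsto\lbr wn\rbr$, whose usefulness rests on the verification $w\neq\sqrt{\beta}$, which holds only for $a\geq 4$ and forces a separate treatment of $a=3$ (by passing to $\beta^2$). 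You instead use $\alpha=1/\beta$ together with the one-sided fractional part $\fp{n\alpha}$: the condition $\fp{n\alpha}<1/2$ discards the odd-indexed convergents for free (their approximation error has the wrong sign), and the window $c\in\bigl(1/\sqrt{a^2-4},\min(1/2,\,4/\sqrt{a^2-4})\bigr)$ kills multiples $mq_{2j}$ with $m\geq 2$ since $n\fpa{n\alpha}=m^2 q_{2j}\fpa{q_{2j}\alpha}\to m^2/\sqrt{a^2-4}$; note this window is nonempty already at $a=3$ (it is $\approx(0.447,0.5)$), so no special case is needed. The trade-off is that your argument leans on the explicit Markov-type asymptotic $R_i\fp{R_i\alpha}\to 1/\sqrt{a^2-4}$ and the corresponding calibration of $c$ against $1/2$ (to keep Legendre applicable) and against $4/\sqrt{a^2-4}$ (to exclude $m\geq 2$), whereas the paper avoids committing to an explicit constant by using the auxiliary map. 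Your identification $q_{2j}=R_{j+1}$, the closed form $R_i=(\beta^i-\beta^{-i})/\sqrt{a^2-4}$, and the error formula for $R_i\alpha-R_{i-1}$ all check out; the final appeal to Lemma \ref{lem:gen-poly-set-of-zeros} (applied to both conditions defining $E$, then intersected) and Proposition \ref{przepraszamzetakpozno} is exactly as intended.
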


 \begin{proof} Let $\beta'$ be the Galois conjugate of $\beta$. Since $\beta$ is a quadratic Pisot unit, it satisfies the equation $\beta^2-a\beta-1=0$ for $a\geq 1$ (if the norm of $\beta$ is $-1$) or the equation $\beta^2-a\beta+1=0$ for $a\geq 3$ (if the norm of $\beta$ is $1$). In Example \ref{ex:lin-rec-is-gp}, we have constructed a sequence $(n_i)$ satisfying the recurrence $n_{i+2}=a n_{i+1} - n_i$ for $ i\geq 0$ and such that its set of values is generalised polynomial. Hence in this case the claim follows from Proposition \ref{przepraszamzetakpozno}.
 
 If the norm of $\beta$ is $1$, we need to argue a bit more carefully. The continued fractions exapansion of $\beta$ is $\beta = [a-1,\overline{1,a-2}]$. Let $(q_i)_{i\geq 0}$ be the sequence of denominators of convergents of this continued fraction. We get the recurrence relations $q_0=q_1=1$ and \begin{align*} q_{2i+2}&=(a-2)q_{2i+1}+q_{2i},  \\ q_{2i+3}&=q_{2i+2}+q_{2i+1}.\end{align*} By induction, we get from this the relations \begin{align*} q_{2i+4}&=aq_{2i+2}+q_{2i},  \\ q_{2i+5}&=aq_{2i+3}+q_{2i+1}.\end{align*}
 
  We solve this linear recurrence equations to get that $$q_{2i}=u_1\beta^i+u_2 (\beta')^i, \quad q_{2i+1}=v_1\beta^i+v_2 (\beta')^i, \quad i\geq 0$$ for some $u_1,u_2,v_1,v_2\in \R$. Since $0<\beta'<1$, we get $$q_{2i+1}= \lbr w q_{2i} \rbr, \quad q_{2i+2}= \lbr (\beta/w) q_{2i+1} \rbr$$ with $w=v_1/u_1$ for large $i$. Computing the initial values, we check that $w\neq \sqrt{\beta}$ if $a \geq 4$.
 
 Consider the set $E'=\set{n\in \N}{\norm{n\beta}<1/(2n)}.$ As in Example \ref{ex:lin-rec-is-gp}, we argue that $E'$ is generalised polynomial and that by Legendre's Theorem we have $E' \subset \set{q_i}{i\in \N_0}$. On the other hand, by the well-known error formula, we get $$\norm{q_{2i+1} \beta} < \frac{1}{q_{2i+2}} \leq \frac{1}{(a-2)q_{2i+1}}.$$ Let us assume that $a\geq 4$. Then we see that  $\set{q_{2i+1}}{i\in \N_0} \subset E'$. Since $q_{2i+1}= \lbr w q_{2i} \rbr$ and $q_{2i+2}= \lbr (\beta/w) q_{2i+1} \rbr$, $w\neq \beta/w$, we see that the set $$E=\set{n\in \N}{n\in E' \text{ and } \lbr w n \rbr \notin E'}$$ is equal to $\set{q_{2i+1}}{i\in \N_0}$ up to a finite set. Thus the set $\set{q_{2i+1}}{i\in \N_0}$ is generalised polynomial and the claim again follows from Proposition \ref{przepraszamzetakpozno}.
 
 It remains to treat the case $a=3$. In this case $\beta^2=3\beta+1$, and hence $(\beta^2)^2=7\beta^2 +1$. Applying the previous case to $\beta^2$, we see that the set $\{\lbr \beta^{2i} \rbr \mid i\in \N_0\}$ is generalised polynomial. By Proposition \ref{przepraszamzetakpozno}, so is the set $\{\lbr \beta^{2i+1} \rbr \mid i\in \N_0\}$ (since $\lbr \beta^{2i+1} \rbr$ and $\lbr \beta^{2i} \rbr$ satisfy for large $i$ the same linear recurrence). Thus, our claim holds in this case as well. \end{proof}

 \subsection*{Cubic Pisot numbers}
 
In order to replace the Fibonacci numbers by a linear recurrence sequence of higher degree, we need to use higher dimensional diophantine approximation. We briefly review the topic below. Let $\theta$ be a point in $\R^d$, $d\geq 1$, and let $N$ denote a norm on $\R^d$. Let $$N_0(\theta)=\inf_{p\in \Z^d} N(\theta-p)$$ denote the distance from $\theta$ to a nearest lattice point. We say that an integer $q\geq 1$ is a \emph{best approximation} of $\theta$ with respect to the norm $N$ if $N_0(q\theta)<N_0(k\theta)$ for $1\leq k\leq q-1$. If $\theta \notin \Q^d$, there are infinitely many best approximations of $\theta$. We order them in an increasing sequence $(q_i)_{i\geq 0}$, $1=q_0<q_1<\ldots.$ We call this sequence the \emph{sequence of best approximations}. For $d=1$, the sequence is well known as the sequence of denominators of convergents of the continued fraction expansion of $\theta$ (in the latter sequence the first term possibly appears twice). For $d\geq 2$ and general $\theta\in \R^d$ there is no satisfactory theory of best approximations. However, the problem has been studied for specific choices of $\theta$, notably by Lagarias \cite{Lagarias-1982}, Chekhova-Hubert-Messaoudi \cite{ChekhovaHubertMessaoudi-2001}, Chevallier \cite{Chevallier-2013}, and Hubert-Messaoudi \cite{HubertMessaudi-2006} in the case when the coordinates of $\theta$ lie in some cubic number fields. We follow the presentation of \cite{HubertMessaudi-2006}. 

Consider the polynomial $P(x)=x^3-ax^2-bx-1$ with integer coefficients $a,b$ satysfying ($a\geq 0$ and $0\leq b \leq a+1$) or ($a\geq 2$ and $b=-1$). Assume further that the polynomial $P$ has a unique real root $\beta>1$ and a pair of complex conjugate roots $\alpha, \bar{\alpha}$. (The condition on $a$ and $b$ arises from the work of Akiyama \cite{Akiyama-2000} who classified Pisot units of degree 3 satisfying the so-called Property (F) concerning finiteness of certain $\beta$-expansions.) Then $\beta$ generates an imaginary cubic Pisot field. We will show that the set $\{\lbr \beta^i \rbr \mid i \in \N_0\} $ is generalised polynomial. To this end, we use the results of Hubert and Messaoudi who studied the best approximations of the point $\theta=(\beta^{-1}, \beta^{-2})$ \cite[Theorem 1]{HubertMessaudi-2006}.

\begin{theorem}[\cite{HubertMessaudi-2006}]\label{bambambabam} Let the polynomial $P(x)=x^3-ax^2-bx-1$ with roots $\beta, \alpha, \bar{\alpha}$ be as above. Define the sequence $(R_i)_{i\geq 0}$ by the recurrence formula $R_0=1$, $R_1=a$, $R_2=a^2+b$, $$R_{i}=aR_{i-1}+bR_{i-2}+R_{i-3}, \quad i\geq 3.$$ Then there exists a norm $N$ on $\R^2$ (called \emph{a Rauzy norm}) such that the sequence of best approximations of the point $\theta=(\beta^{-1}, \beta^{-2})$ with respect to the norm $N$ coincides except for finitely many elements with the sequence $(R_i)$. Furthermore, there exists a sequence of real numbers $(m_q)_{q\geq 1}$ satisfying the following properties:

\begin{enumerate}
\item\label{bambambabam.mapprox} There exists a constant $c>0$ such that for all $q\geq 1$ and $p\in \Z^2$ if $N(q\theta-p)<c$, then $$N(q\theta-p)=N_0(q\theta)=m_q.$$
\item\label{bambambabam.mmin} If $q<R_i$, then $m_{R_i}<m_q$.
\item\label{bambambabam.mwzor} For $i\geq 0$ we have $m_{R_i}=m_1|\alpha|^n$. 
\item We have $\liminf_{q\to \infty} m_q =0.$
\item\label{bambambabam.beta} We have $$\beta^i=R_i + \frac{b\beta+1}{\beta^2} R_{i-1} + \frac{1}{\beta} R_{i-2}, \quad i\geq 2.$$
\item\label{bambambabam.nwzor} The norm $N$ is given by the formula $$N(x)=|(\alpha+b/\beta)x_1+x_2/\beta|, \quad x=(x_1,x_2)\in \R^2.$$
\end{enumerate}
\end{theorem}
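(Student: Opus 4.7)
My plan is to work inside the imaginary cubic Pisot field $K = \QQ(\beta)$ and exploit Galois conjugation. Let $\sigma \colon K \hookrightarrow \CC$ denote the embedding sending $\beta$ to $\alpha$; I think of $\theta = (\beta^{-1}, \beta^{-2})$ as encoding a $\QQ$-basis of $K$.

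First I would establish the algebraic identity (\ref{bambambabam.beta}) by induction on $i \geq 2$. The base cases follow by direct computation from $\beta^3 = a\beta^2 + b\beta + 1$ together with $R_0 = 1$, $R_1 = a$, $R_2 = a^2 + b$, while the inductive step is a routine combination of this relation with the recurrence defining $R_i$. Applying $\sigma$ to (\ref{bambambabam.beta}) and rearranging --- using $\alpha + \beta + \bar{\alpha} = a$ together with $1 = a\beta^{-1} + b\beta^{-2} + \beta^{-3}$ to eliminate the $R_i$-term --- one produces, for an appropriate integer vector $p^{(i)} = (p_1^{(i)}, p_2^{(i)}) \in \ZZ^2$, an identity of the form
$$(\alpha + b/\beta)\bigl(R_i \beta^{-1} - p_1^{(i)}\bigr) + \beta^{-1}\bigl(R_i \beta^{-2} - p_2^{(i)}\bigr) \;=\; C\,\alpha^{\,i}$$
with $C \neq 0$ independent of $i$. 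Taking the formula in (\ref{bambambabam.nwzor}) as the definition of $N$, this reads $N(R_i \theta - p^{(i)}) = |C|\cdot |\alpha|^i$, which is (\ref{bambambabam.mwzor}) with $m_1 = |C\alpha|$; item (iv) is then immediate from $|\alpha| < 1$.

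The substantive content lies in the best-approximation assertions (\ref{bambambabam.mapprox}) and (\ref{bambambabam.mmin}). My plan is to embed $\ZZ^3$ into $\RR \times \CC$ via $(p_1, p_2, q) \mapsto (q, \Phi(p_1, p_2, q))$, where $\Phi$ is the linear form such that $|\Phi(p_1, p_2, q)| = N(q\theta - p)$; this is possible precisely because (\ref{bambambabam.nwzor}) factors through the Galois embedding $\sigma$. For (\ref{bambambabam.mapprox}), the Pisot property of $\beta$ forces any triple with $|\Phi| < c$ (for $c > 0$ small) to correspond to an algebraic integer $\xi \in \ZZ[\beta]$ with both $\xi$ and its complex conjugate small, and a standard geometry-of-numbers argument in the lattice $\ZZ[\beta] \subset \RR \times \CC$ shows that the only such elements are the ones realising $(p^{(i)}, R_i)$. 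For (\ref{bambambabam.mmin}), the plan is to identify the image lattice (modulo the $\RR$-axis) with the orbit in $\CC$ that tiles the Rauzy fractal associated with the cubic Pisot substitution encoded by $P(x)$; under this identification, an inequality $m_q < m_{R_i}$ for some $R_{i-1} < q < R_i$ would produce a first return of the orbit to an $|\alpha|^i$-neighbourhood of $0$ strictly earlier than time $R_i$, which is ruled out by the combinatorial structure of the first-return map on the Rauzy tile.

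The main obstacle is (\ref{bambambabam.mmin}), for which there is no analogue of Legendre's theorem in this higher-dimensional setting. My plan imports the Rauzy-fractal machinery for cubic Pisot units as a black box, following \cite{Akiyama-2000} and \cite{ChekhovaHubertMessaoudi-2001}; the arithmetic hypothesis on the pair $(a,b)$ is exactly what is needed to guarantee Akiyama's property $(F)$, and is therefore precisely what makes the tiling argument applicable here.
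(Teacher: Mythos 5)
The paper itself gives no proof of this theorem: the statement is a compilation of results attributed to Hubert--Messaoudi, and the ``proof'' is a one-line pointer to the relevant Theorem~1, Theorem~3, Corollary~2, Proposition~7, Lemmas~2 and~4, and formula~(3) of that paper. Your proposal attempts a reconstruction, and its algebraic skeleton is correctly identified: item (v) by induction from $\beta^3=a\beta^2+b\beta+1$, item (vi) as the Galois-conjugate linear form, and items (iii)--(iv) by applying $\sigma\colon\beta\mapsto\alpha$ together with the Vieta relations $\alpha+\beta+\bar\alpha=a$ and $\alpha\beta\bar\alpha=1$. One harmless slip: after the $\beta^i$- and $\alpha^i$-contributions cancel, the surviving term is proportional to $\bar\alpha^{\,i}$ rather than $\alpha^i$, which is of course immaterial once moduli are taken since $|\bar\alpha|=|\alpha|$. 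Item (i) you over-engineer: no Pisot property or geometry-of-numbers argument is required. Because $\Imm\alpha\neq 0$ while $b/\beta$ and $1/\beta$ are real, $N$ is a genuine (non-degenerate) norm on $\RR^2$, and the minimal $N$-length of a nonzero vector of $\ZZ^2$ is bounded below by $\min(|\Imm\alpha|,1/\beta)>0$; taking $c$ to be half this bound and using the triangle inequality gives (i) immediately. Item (ii) is where all the real work sits --- as you correctly observe, there is no Legendre theorem in dimension $\geq 2$ --- and there you defer entirely to the Rauzy-fractal/tiling machinery for cubic Pisot units as a black box, which is functionally identical to the paper's wholesale citation of Hubert--Messaoudi. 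In substance, then, both proofs rest on exactly the same body of external work; your version simply exposes more of the bookkeeping that the paper leaves implicit in the citation.
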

\begin{proof} This is proven in \cite[Theorem 1, Theorem 3, Corollary 2, Proposition 7, Lemma 4, Lemma 2, and formula (3)]{HubertMessaudi-2006}. The sequence $m_q$ is denoted $N(\delta(q))$ there.
\end{proof}
\begin{remark}
The choice of the initial values of $(R_i)$ might seem unnatural. Note however that it corresponds to $R_0=1$, $R_{-1}=R_{-2}=0$.
\end{remark}

We are now ready to prove the following theorem. 

\begin{theorem}\label{LAcrime} Let $P$ be a polynomial $P(x)=x^3-ax^2-bx-1$ with integer coefficients $a,b$ satisfying ($a\geq 0$ and $0\leq b \leq a+1$) or ($a\geq 2$ and $b=-1$). Assume that $P$ has a unique real root $\beta$ and that $\beta>1$. Then the set $\{\lbr \beta^i \rbr \mid i\in \N_0\}$ is generalised polynomial.\end{theorem}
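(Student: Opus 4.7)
The plan is a reduction via Proposition~\ref{przepraszamzetakpozno} to showing that $\{R_i : i \in \N_0\}$ is a generalised polynomial set, followed by an explicit threshold argument. First, since the complex conjugate roots of $P$ satisfy $|\alpha|^2 = 1/\beta < 1$ (from the product-of-roots relation $\beta\alpha\bar\alpha = 1$), $\beta$ is a Pisot number of degree three; and $(R_i)$ from Theorem~\ref{bambambabam} is a non-identically-zero integer solution of the associated linear recurrence, so Proposition~\ref{przepraszamzetakpozno} indeed reduces the claim to the generalised-polynomial-ness of $\{R_i\}$.

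Writing $\alpha = \alpha_R + \sqrt{-1}\,\alpha_I$ and noting that the discrepancies $\delta_j(q) = q\beta^{-j} - \br{q\beta^{-j}}$ for $j = 1, 2$ are generalised polynomials in $q$, formula~(vi) shows that
\[
	M(q) \;:=\; \bigl((\alpha_R + b/\beta)\delta_1(q) + \delta_2(q)/\beta\bigr)^2 + (\alpha_I\,\delta_1(q))^2
\]
is also a generalised polynomial, and by property~(i) equals $m_q^2$ whenever $M(q) < c^2$. I would then consider the generalised polynomial set $E' = \{q \in \N : q M(q) < K\}$, which is generalised polynomial by Lemma~\ref{lem:gen-poly-set-of-zeros}, for a carefully chosen constant $K$. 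Using property~(iii) together with the asymptotic $R_i \sim u\beta^i$ with $u = \beta^3/(\beta^3 + b\beta + 2) > 0$ (obtained by substituting $R_i = u\beta^i + v\alpha^i + \bar v\bar\alpha^i$ into formula~(v)), one finds $R_i M(R_i) \to u m_1^2$, so any $K > u m_1^2$ places all but finitely many $R_i$ into $E'$.

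The main step will be the reverse inclusion $E' \subseteq \{R_i\}$ up to a finite set. A direct computation using
\[
	(\alpha + b/\beta)\delta_1(R_j) + \delta_2(R_j)/\beta \;=\; \bar v(\bar\alpha - \alpha)(1 - \bar\alpha/\beta)\,\bar\alpha^{j-1}
\]
(derived by expanding $\delta_1(R_j) = R_j\beta^{-1} - R_{j-1}$ and $\delta_2(R_j) = R_j\beta^{-2} - R_{j-2}$ in the Galois basis) and telescoping gives $M(R_{i+1} - R_i) = |\alpha - 1|^2 M(R_i)$; combined with the algebraic identity $(\beta - 1)|\alpha - 1|^2 = a+b$ (which follows from $\beta^3 = a\beta^2 + b\beta + 1$ after a short manipulation), this yields the limit $F(R_{i+1} - R_i) := (R_{i+1} - R_i) M(R_{i+1} - R_i) \to (a+b)u m_1^2$. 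When $a + b \geq 2$, choosing $K \in (u m_1^2,\, (a+b) u m_1^2)$ excludes this dominant ``spurious'' candidate, and analogous estimates on the remaining natural candidates of the form $R_i \pm R_{i-j}$ for $j \geq 1$ --- each yielding an $F$-limit bounded below by $\beta^{j'} u m_1^2$ for some $j' \geq 1$ --- rule those out as well, so $E' = \{R_i\}$ up to a finite set.

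The hardest part will be the boundary regime $a + b = 1$, i.e.\ $(a, b) \in \{(0, 1), (1, 0), (2, -1)\}$. For $(0, 1)$ and $(1, 0)$, direct manipulation of the recurrence gives $R_{i+1} - R_i = R_{i-4}$ and $R_{i-2}$ respectively, so the apparent spurious candidates already belong to $\{R_i\}$ and the unrefined threshold still works. For $(2, -1)$ the sequence $S_i := R_{i+1} - R_i$ is genuinely disjoint from $\{R_i\}$ for large $i$ and shares the asymptotic $F(S_i) \to u m_1^2$; handling this case will require intersecting $E'$ with an additional generalised polynomial condition. The natural strategy is to exploit the different initial values $(R_0, R_1, R_2) = (1, 2, 3)$ versus $(S_0, S_1, S_2) = (1, 1, 2)$, which under the common recurrence yield distinct residue patterns modulo small primes; a suitable congruence constraint on $q$, combined with the threshold, should cut out exactly $\{R_i\}$.
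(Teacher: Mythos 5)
Your overall strategy --- reduce to $\{R_i\}$ via Proposition~\ref{przepraszamzetakpozno}, express $N_0(q\theta)^2$ as a generalised polynomial and compare it against a decreasing threshold, then invoke the best-approximation structure --- is the same as the paper's. But your execution diverges in a way that leaves a genuine gap, and the paper's version of the threshold avoids exactly the issue you run into.

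The paper does not use a constant $K$ with $qM(q) < K$. Instead it builds the generalised polynomial
\[
g(q) = m_1^{-2}\Bigl(q + \tfrac{b\beta+1}{\beta^2}\lbr q/\beta\rbr + \tfrac{1}{\beta}\lbr q/\beta^2\rbr\Bigr),
\]
which, thanks to Theorem~\ref{bambambabam}(\ref{bambambabam.beta}) and (\ref{bambambabam.mwzor}), satisfies the \emph{exact} identity $g(R_i) = m_{R_i}^{-2}$ for all large $i$, and which one checks is increasing in $q$. The reverse inclusion is then immediate from Theorem~\ref{bambambabam}(\ref{bambambabam.mmin}): for $R_i < q < R_{i+1}$ one has $m_q \geq m_{R_i} = g(R_i)^{-1/2} > g(q)^{-1/2}$, so $q$ fails the threshold. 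No enumeration of competitors is needed, and all parameter ranges (including $a+b=1$) are handled uniformly.

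Your argument replaces this exact threshold with the asymptotic one $K/q$ and then tries to certify the reverse inclusion by examining a list of ``natural candidates'' $R_{i+1}-R_i$ and $R_i\pm R_{i-j}$. This is where the gap lies: nothing in Theorem~\ref{bambambabam} tells you that an integer $q$ with small $q\,m_q^2$ (or small $q\,M(q)$) must be of that form. The best-approximation theory only gives $m_q \geq m_{R_i}$ for $R_i \le q < R_{i+1}$, which yields $q\,m_q^2 > R_i m_{R_i}^2 \to u m_1^2$ but never pushes the bound past your chosen $K > u m_1^2$; an integer $q$ slightly larger than $R_i$ with $m_q$ only slightly larger than $m_{R_i}$ is not excluded by anything you prove. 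Your $F$-limit calculations for the listed candidates may well be correct, but they do not address the completeness of the list, and in the two-dimensional simultaneous-approximation setting there is no analogue of the continued-fraction classification that would justify such a list. The $a+b=1$ patch via congruences is also left speculative. In short: adopt the paper's exact, monotone, generalised-polynomial threshold $g(q)^{-1/2}$; then the candidate enumeration and the $K$-gap disappear, and the boundary regime requires no separate treatment.

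A smaller point: your $M(q)$ takes $p_2 = \lbr q\beta^{-2}\rbr$, whereas the paper observes that the $N$-minimising $p_2$ is $\lbr\operatorname{Re}\!\bigl(\beta(\alpha+b/\beta)\bigr)(q\beta^{-1}-p_1)+q\beta^{-2}\rbr$, which differs for general $q$ (the two coincide for $q = R_i$ with $i$ large). Using the naive $p_2$ you only get $M(q)\ge m_q^2$; this does not break the forward inclusion but makes your reverse inclusion a claim about a set you have not actually related to $m_q$.
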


\begin{proof} By Proposition \ref{przepraszamzetakpozno}, instead of $\lbr \beta^i \rbr$, we may study the sequence $R_i$ of the previous theorem. It is easy to verify that the sequence $(R_i)$ is strictly increasing for $i\geq 6$. We will start by constructing a generalised polynomial $g \colon \Z \to \R$ that is increasing and which takes the value $g(q)=m_q^{-2}$ at points $q=R_i$ with $i$ sufficiently large. The sequence $R_i$ satisfies a third order linear recurrence with characteristic polynomial $P$ and hence $$R_i=u\beta^i+v\alpha^i+w\bar{\alpha}^i$$ for some $u,v,w\in \CC$. Hence, $\lim_{i\to \infty} (R_i-u\beta^i)=0$, and thus for $i$ large enough we have $$R_{i-1}=\lbr R_{i}/\beta\rbr, \quad R_{i-2}=\lbr  R_{i}/\beta^2\rbr.$$
Define a generalised polynomial $g$ by the formula $$g(q)= m_1^{-2}\left(q + \frac{b\beta+1}{\beta^2} \lbr q/\beta \rbr + \frac{1}{\beta}\lbr  q/\beta^2\rbr\right), \quad q\in\Z.$$
Then by Theorem \ref{bambambabam}.(\ref{bambambabam.mwzor}) and \ref{bambambabam}.(\ref{bambambabam.beta}), we have $g(R_i)=m_1^{-2}|\beta|^{i}=m_{R_i}^{-2}$ for large $i$. (Note that $|\alpha|=|\beta|^{-1/2}$.) Furthermore, $g$ is increasing. 

Consider the set $$S=\{q\geq 1 \mid N_0(q\theta)<g(q)^{-1/2}\}.$$ Since the sequence $\{R_i \mid i\in \N_0 \}$ is up to finitely many terms the sequence of best approximations of $\theta$, applying Theorem \ref{bambambabam}.(\ref{bambambabam.mapprox}) and \ref{bambambabam}.(\ref{bambambabam.mmin}) and the fact that $g$ is increasing and $g(R_i)=m_{R_i}^{-2}$ tends to $0$ as $i\to \infty$, we see that $S$ coincides with the set $\{R_i \mid i\in \N_0\}$ up to a finite set. In order to show that $S$ is generalised polynomial, it is enough to write the expression $N_0(q\theta)$ in terms of a generalised polynomial in $q$. We will do that under the additional assumption that $N_0(q\theta)$ is small. Assume that $N_0(q\theta)< \Imm(\alpha)/2$ and let $p=(p_1,p_2)\in \Z^2$ be such that $N_0(q\theta)=N(q\theta-p)$. By Theorem \ref{bambambabam}.(\ref{bambambabam.nwzor}), we have $$N(q\theta-p)=|(\alpha+b/\beta)(q\beta^{-1}-p_1)+(q\beta^{-2}-p_2)/\beta|< \Imm(\alpha)/2.$$ By looking at the imaginary part of the expression, it follows that $|q\beta^{-1}-p_1|<1/2$, and hence $p_1$ is uniquely determined as $p_1=\lbr q\beta^{-1}\rbr.$ Since $p$ minimises $N(q\theta-p)$, we also have $$p_2=\lbr \beta(\alpha+b/\beta)(q\beta^{-1}-p_1)+q\beta^{-2}\rbr.$$

Therefore we have \begin{align*}N_0(q\theta)\leq &|(\alpha+b/\beta)(q\beta^{-1}-\lbr q\beta^{-1}\rbr)+\\+&(q\beta^{-2}-\lbr \beta(\alpha+b/\beta)(q\beta^{-1}-\lbr q\beta^{-1}\rbr)+q\beta^{-2}\rbr)/\beta|\end{align*}
and  equality holds if furthermore $N_0(q\theta)< \Imm(\alpha)/2$. Call the expression on the right hand side of this inequality $h(q)$. Rewriting the  norm in $\CC$ in terms of the coordinates, we see that $h(q)^2$ is given by a generalised polynomial. Therefore the set $$T=\{q\geq 1 \mid h(q)^2<g(q)^{-1}\}$$ coincides with $S$ up to a finite set. Hence, the set $\{R_i \mid i\in \N_0\}$ is generalised polynomial. \end{proof} 
\section{Concluding remarks}

We conclude with some general remarks and questions which naturally appeared during the work on this project.

To begin with, we note that the difficulty in understanding the behaviour of arbitrary (possibly sparse) generalised polynomials should not come as a surprise. Indeed, many deep questions in number theory can be reduced to the task of deciding whether a certain generalised polynomial is identically $0$ or not. We cite one example which we find particularly striking.

\begin{conjecture*}[Littlewood]
	Let $\a,\b \in \RR$ and $\e > 0$. Then the generalised polynomial $f \colon \NN \to \{0,1\}$ given by
	$$
		f(n) = \ifbra{ n \fpa{n\a} \fpa{n \b} < \e }
	$$
	is not identically $0$.
\end{conjecture*}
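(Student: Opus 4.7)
Littlewood's conjecture is a long-standing open problem, so a complete proof proposal is not realistically available; what follows is a plan based on the most promising known attack. The starting observation, going back to Cassels--Swinnerton-Dyer, is a dynamical reformulation. On the homogeneous space $X = \mathrm{SL}(3,\RR)/\mathrm{SL}(3,\ZZ)$ let $A$ denote the full positive diagonal subgroup (isomorphic to $\RR^2$) and set
$$
x_{\a,\b} = \begin{pmatrix} 1 & \a & \b \\ 0 & 1 & 0 \\ 0 & 0 & 1\end{pmatrix} \mathrm{SL}(3,\ZZ) \in X.
$$
A short computation with diagonal conjugation shows that $(\a,\b)$ violates Littlewood's conjecture precisely when the orbit $A \cdot x_{\a,\b}$ is bounded in $X$. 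The plan is therefore to prove that no such orbit of $A$ is bounded, which would immediately show that the generalised polynomial $f(n) = \ifbra{ n \fpa{n\a} \fpa{n\b} < \e}$ is nonzero infinitely often.

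To rule out bounded orbits I would try to exploit measure rigidity for higher-rank diagonal actions. If $A \cdot x_{\a,\b}$ were bounded, then any weak-$*$ limit of normalised averages of point masses along a F\o{}lner sequence in $A$ yields an $A$-invariant probability measure $\mu$ on $X$ supported in the orbit closure. I would decompose $\mu$ into $A$-ergodic components and invoke the Einsiedler--Katok--Lindenstrauss classification: any ergodic component with positive entropy for some one-parameter subgroup of $A$ must be the Haar measure, which is impossible here because $\mu$ is supported on a compact proper subset of $X$. The remaining task would be to force positive entropy on some component, presumably by a variational or effective-equidistribution argument exploiting the fact that the putative bounded orbit cannot concentrate on a measure-zero union of closed orbits of proper intermediate subgroups.

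The main obstacle, and the reason Littlewood's conjecture remains open, is precisely this entropy input: without an a priori lower bound on the entropy of $\mu$, the Einsiedler--Katok--Lindenstrauss theorem does not apply, and zero-entropy $A$-invariant measures supported on compact orbit closures cannot currently be excluded. Producing positive entropy from a single bounded orbit, or alternatively proving a zero-entropy rigidity theorem for $A$ on $\mathrm{SL}(3,\RR)/\mathrm{SL}(3,\ZZ)$, would require essentially new ideas, which I do not have. The nilmanifold and \IP-set methods developed in Sections \ref{sec:DEF}--\ref{sec:S-GENPOLY} of this paper do not obviously transfer to this setting, since they are tailored to (pro)nilpotent homogeneous dynamics and to generalised polynomials expressed through Theorem \ref{BLnilgenpolythm}, whereas the relevant quantity $n\fpa{n\a}\fpa{n\b}$ lives naturally on a non-nilpotent homogeneous space; it is exactly this discrepancy that makes Littlewood's problem harder than Theorem \ref{thm:main-A} and which I would regard as the true technical bottleneck in any such approach.
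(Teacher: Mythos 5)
The statement you were asked about is not a theorem of the paper: it is Littlewood's conjecture, stated verbatim as an (unresolved) conjecture in the concluding remarks purely to illustrate that deciding whether a generalised polynomial vanishes identically can encode famously hard problems. The paper offers no proof and explicitly says the conjecture remains open; its only substantive remark is the citation of Einsiedler--Katok--Lindenstrauss for the Hausdorff-dimension-zero bound on the exceptional set. You correctly recognised that no proof exists, and your sketch of the Cassels--Swinnerton-Dyer reformulation on $\mathrm{SL}(3,\RR)/\mathrm{SL}(3,\ZZ)$, the role of measure rigidity for the diagonal $A$-action, and the entropy gap that blocks a complete argument is accurate and is exactly the context in which the paper's EKL citation sits. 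Your closing observation that the nilmanifold/\IP\ machinery of the paper does not transfer, because the relevant homogeneous space is not nilpotent, is also correct and is the right way to explain why Theorem~\ref{thm:main-A} is tractable while Littlewood is not. In short: there is no proof to compare against, and your honest assessment of the state of the art is the appropriate answer.
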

It is known by the work of Einsiedler, Katok and Lindenstrauss \cite{EinsiedlerKatokLindenstrauss-2006} that the set of $(\a,\b)$ for which the above fails is either empty or of Hausdorff dimension $0$, but the conjecture remains unresolved.

In a companion paper, we study automatic sequences that are given by generalised polynomials. We reduce the problem considered there to the question 
whether the set $\set{k^t}{t \in \NN}$ is generalised polynomial; we suspect it is not. On the other hand, by Example \ref{ex:lin-rec-is-gp}, the set of Fibonacci numbers, which can be essentially written as $\set{ \lbr \varphi^t \rbr }{ t \in \NN}$, where $\varphi = (1+\sqrt{5})/2$, is generalised polynomial. Similarly, in Proposition \ref{thm:quadraticPisotisGP}, Theorem \ref{LAcrime} and Proposition \ref{przepraszamzetakpozno} we show that the sets of values of  certain classes of linear recurrence sequences of degree two and three are generalised polynomial, and the same is the case for the set $\set{ \lbr \beta^t \rbr }{ t \in \NN}$, where $\beta$ is either a quadratic Pisot unit or a certain non-totally real cubic Pisot unit. This suggests the following questions.

\begin{question}\label{Q:Pisot}
	For which $\lambda > 1$ is the set $$E_\lambda = \set{\lbr\lambda^i \rbr}{i \in \NN_0}$$ generalised polynomial?
\end{question}

We note that the set of such $\lambda$ includes at least all the Pisot units of degree $2$ and some Pisot units of degree $3$. We may state a more precise version of the previous question.

\begin{question}\label{Q:truePisot}
	Let $\beta$ be a Pisot number. Is it true that if the set  $$E_\beta = \set{\lbr\beta^i \rbr}{i \in \NN_0}$$ is generalised polynomial, then $\beta$ is a Pisot unit? Does the converse hold?
\end{question}

An intimately related question is the following one. 

\begin{question}\label{Q:linearrecurrent}
	For which linear recurrence sequences $n=(n_i)_{i\geq 0}$ with  values in $\Z$ is the set $$E_n= \set{ n_i }{i \in \N_0}$$ generalised polynomial?
\end{question}

By Proposition \ref{przepraszamzetakpozno}, Question \ref{Q:truePisot} can be regarded as a special case of Question \ref{Q:linearrecurrent}.

Looking at these examples from the perspective of recursive relations with non-constant coefficients (as in Remark \ref{lawnmower}), we may also ask the following question.

\begin{question}\label{Q:BApprox}
	Let $(n_i)_{i\geq 0}$ be a sequence given by $n_0 = 0,\ n_1 = 1$ and the recursive relation
	$$n_{i} = a_i n_{i-1} + b_i n_{i-2},$$ with $a_i,b_i \in \ZZ$ bounded in absolute value. For which $a = (a_i)_{i\geq 0},\ b = (b_i)_{i\geq 0}$ is the set 
	$$E_{a,b} = \set{n_i}{i \in \NN}$$ generalised polynomial?
\end{question}

\bibliographystyle{alpha}
\bibliography{bibliography}

\begin{thebibliography}{CHM01}

\bibitem[Aki00]{Akiyama-2000}
Shigeki Akiyama.
\newblock Cubic {P}isot units with finite beta expansions.
\newblock In {\em Algebraic number theory and {D}iophantine analysis ({G}raz,
  1998)}, pages 11--26. de Gruyter, Berlin, 2000.

\bibitem[Bau74]{Baumgartner-1974}
James~E. Baumgartner.
\newblock A short proof of {H}indman's theorem.
\newblock {\em J. Combinatorial Theory Ser. A}, 17:384--386, 1974.

\bibitem[Ber10]{Bergelson2010d}
Vitaly Bergelson.
\newblock Ultrafilters, {IP} sets, dynamics, and combinatorial number theory.
\newblock In {\em Ultrafilters across mathematics}, volume 530 of {\em Contemp.
  Math.}, pages 23--47. Amer. Math. Soc., Providence, RI, 2010.

\bibitem[BK]{ByszewskiKonieczny}
Jakub Byszewski and Jakub Konieczny.
\newblock Automatic sequences and generalised polynomials, in preparation.

\bibitem[BL07]{BergelsonLeibman2007}
Vitaly Bergelson and Alexander Leibman.
\newblock {Distribution of values of bounded generalized polynomials}.
\newblock {\em Acta Mathematica}, 198(2):155--230, June 2007.

\bibitem[BM10]{BergelsonMcCutcheon-2010}
V.~Bergelson and R.~McCutcheon.
\newblock Idempotent ultrafilters, multiple weak mixing and {S}zemer\'edi's
  theorem for generalized polynomials.
\newblock {\em J. Anal. Math.}, 111:77--130, 2010.

\bibitem[Che13]{Chevallier-2013}
Nicolas Chevallier.
\newblock Best simultaneous {D}iophantine approximations and multidimensional
  continued fraction expansions.
\newblock {\em Mosc. J. Comb. Number Theory}, 3(1):3--56, 2013.

\bibitem[CHM01]{ChekhovaHubertMessaoudi-2001}
Nataliya Chekhova, Pascal Hubert, and Ali Messaoudi.
\newblock Propri\'et\'es combinatoires, ergodiques et arithm\'etiques de la
  substitution de {T}ribonacci.
\newblock {\em J. Th\'eor. Nombres Bordeaux}, 13(2):371--394, 2001.

\bibitem[EKL06]{EinsiedlerKatokLindenstrauss-2006}
Manfred Einsiedler, Anatole Katok, and Elon Lindenstrauss.
\newblock Invariant measures and the set of exceptions to {L}ittlewood's
  conjecture.
\newblock {\em Ann. of Math. (2)}, 164(2):513--560, 2006.

\bibitem[EW10]{EinsiedlerWard}
Manfred Einsiedler and Thomas Ward.
\newblock {Ergodic Theory - with a view towards Number Theory}, 2010.

\bibitem[FK85]{FurstenbergKatznelson-1985}
H.~Furstenberg and Y.~Katznelson.
\newblock An ergodic {S}zemer\'edi theorem for {IP}-systems and combinatorial
  theory.
\newblock {\em J. Analyse Math.}, 45:117--168, 1985.

\bibitem[Fur81]{Furstenberg1981}
Harry Furstenberg.
\newblock {\em {Recurrence in Ergodic Theory and Combinatorial Number Theory}}.
\newblock Princeton University Press, 1981.

\bibitem[GT12]{GreenTao2012}
Ben Green and Terence Tao.
\newblock {The quantitative behaviour of polynomial orbits on nilmanifolds}.
\newblock {\em Annals of Mathematics}, 175(2):465--540, March 2012.

\bibitem[H{\aa}l93]{Haland-1993}
Inger~Johanne H{\aa}land.
\newblock Uniform distribution of generalized polynomials.
\newblock {\em J. Number Theory}, 45(3):327--366, 1993.

\bibitem[H{\aa}l94]{Haland-1994}
Inger~Johanne H{\aa}land.
\newblock Uniform distribution of generalized polynomials of the product type.
\newblock {\em Acta Arith.}, 67(1):13--27, 1994.

\bibitem[Hin74]{Hindman1974}
Neil Hindman.
\newblock {Finite sums from sequences within cells of a partition of N}.
\newblock {\em Journal of Combinatorial Theory, Series A}, 17(1):1--11, July
  1974.

\bibitem[HK95]{HalandKnuth-1995}
Inger~Johanne H{\aa}land and Donald~E. Knuth.
\newblock Polynomials involving the floor function.
\newblock {\em Math. Scand.}, 76(2):194--200, 1995.

\bibitem[HM06]{HubertMessaudi-2006}
P.~Hubert and A.~Messaoudi.
\newblock Best simultaneous {D}iophantine approximations of {P}isot numbers and
  {R}auzy fractals.
\newblock {\em Acta Arith.}, 124(1):1--15, 2006.

\bibitem[Khi63]{Khintchine-book}
A.~Ya. Khintchine.
\newblock {\em Continued fractions}.
\newblock Translated by Peter Wynn. P. Noordhoff, Ltd., Groningen, 1963.

\bibitem[Lag82]{Lagarias-1982}
J.~C. Lagarias.
\newblock Best simultaneous {D}iophantine approximations. {II}. {B}ehavior of
  consecutive best approximations.
\newblock {\em Pacific J. Math.}, 102(1):61--88, 1982.

\bibitem[Laz54]{Lazard-1954}
Michel Lazard.
\newblock Sur les groupes nilpotents et les anneaux de {L}ie.
\newblock {\em Ann. Sci. Ecole Norm. Sup. (3)}, 71:101--190, 1954.

\bibitem[Lei98]{Leibman1998}
A.~Leibman.
\newblock Polynomial sequences in groups.
\newblock {\em J. Algebra}, 201(1):189--206, 1998.

\bibitem[Lei02]{Leibman2002}
A.~Leibman.
\newblock {Polynomial mappings of groups}.
\newblock {\em Israel Journal of Mathematics}, 129:29--60, 2002.

\bibitem[Lei05a]{Leibman-2005b}
A.~Leibman.
\newblock Pointwise convergence of ergodic averages for polynomial actions of
  {${\Bbb Z}\sp d$} by translations on a nilmanifold.
\newblock {\em Ergodic Theory Dynam. Systems}, 25(1):215--225, 2005.

\bibitem[Lei05b]{Leibman2005}
Alexander Leibman.
\newblock {Pointwise convergence of ergodic averages for polynomial sequences
  of translations on a nilmanifold}.
\newblock {\em Ergodic Theory and Dynamical Systems}, 25(1):201--213, 2005.

\bibitem[Lei12]{Leibman-2012}
A.~Leibman.
\newblock A canonical form and the distribution of values of generalized
  polynomials.
\newblock {\em Israel J. Math.}, 188:131--176, 2012.

\bibitem[Mal51]{Malcev1951}
A.~I. Malcev.
\newblock On a class of homogeneous spaces.
\newblock {\em Amer. Math. Soc. Translation}, 1951(39):33, 1951.

\bibitem[Par69]{Parry1969}
William Parry.
\newblock Ergodic properties of affine transformations and flows on
  nilmanifolds.
\newblock {\em Amer. J. Math.}, 91:757--771, 1969.

\bibitem[Sch72]{Schmidt1972}
Wolfgang~M. Schmidt.
\newblock Norm form equations.
\newblock {\em Ann. of Math. (2)}, 96:526--551, 1972.

\bibitem[ZK13]{Zorin-Kranich2013}
Pavel Zorin-Kranich.
\newblock {\em {Ergodic theorems for polynomials in nilpotent groups}}.
\newblock PhD thesis, Universiteit van Amsterdam, September 2013.

\end{thebibliography}

\end{document}